\newcommand{\C}{\mathcal{C}}
\newcommand{\F}{\mathcal{F}}
\newcommand{\A}{\mathcal{A}}
\newcommand{\B}{\mathcal{B}}
\newcommand{\G}{\mathcal{G}}
\newcommand{\M}{\mathcal{M}}
\newcommand{\Om}{\Omega}
\newcommand{\ta}{\theta}
\newcommand{\R}{\mathds{R}}
\newcommand{\divv}{\operatorname{div}}
\newcommand{\dn}{\partial_n}
\newcommand{\po}{{\partial \Om}}
\newcommand{\VV}{\ta}
\newcommand{\Sb}{S}
\newcommand{\Tb}{T}
\newcommand{\Ub}{U}
\newcommand{\tr}{\operatorname{tr}}
\newcommand{\transp}{\mathsf{T}}
\newcommand{\cL}{\mathfrak{L}}
\newcommand{\vp}{\varphi}
\newcommand{\Rd}{\R^d}
\newcommand{\om}{\omega}
\newcommand{\tz}{s_0}
\newcommand{\Idd}{\operatorname{Id}}
\newcommand{\hold}{\mathcal{D}}
\newcommand{\Tt}{T_s}
\newcommand{\bproof}{\begin{proof}}
\newcommand{\eproof}{\end{proof}}
\newcommand{\hvp}{\hat\vp}
\newcommand{\iom}{\int_\Om}
\newcommand{\iomt}{\int_{\Om_s}}
\newcommand{\bl}{B}
\newcommand{\tp}{\therefore}
\newtheorem{theorem}{Theorem}
\newtheorem{proposition}{Proposition}
\newtheorem{lemma}{Lemma}
\newtheorem{example}{Example}
\theoremstyle{definition}
\newtheorem{definition}{Definition}
\theoremstyle{definition}
\theoremstyle{definition}
\newtheorem{remark}{Remark}
\theoremstyle{definition}
\title[An abstract Lagrangian framework for computing shape derivatives]{An abstract Lagrangian framework for computing shape derivatives}
\author[1]{Antoine Laurain\textsuperscript{1}}
\author[1]{Pedro T. P. Lopes\textsuperscript{1}}
\author[2]{Jean C. Nakasato\textsuperscript{2}}
\address[1]{Instituto de Matem\'atica e Estat\'istica, Universidade de S\~ao Paulo, Rua do Mat\~ao 1010, 05508-090, S\~ao Paulo, SP, Brazil}
\address[2]{Instituto de Ci\^encias Matem\'aticas e de Computa\c{c}\~ao,
	Universidade de S\~ao Paulo, Avenida Trabalhador S\~ao-Carlense, 400, S\~ao Carlos, SP, Brazil}
\email{laurain@ime.usp.br, pplopes@ime.usp.br, nakasato@ime.usp.br}
\subjclass[2010]{49Q10, 49Q12, 35Q93, 35R37 }
\begin{document}
\begin{abstract}
In this paper we study an abstract framework for computing shape derivatives of  functionals subject to PDE constraints.
We revisit the Lagrangian approach using the implicit function theorem in an abstract setting tailored for applications to shape optimization.
This abstract framework yields practical formulae to compute the derivative of a shape functional, the material derivative of the state, and the adjoint state. Furthermore, it allows to gain insight on the  duality between the material derivative of the state and the adjoint state.
We show several applications of our main result to the computation of distributed shape derivatives for problems involving linear elliptic, nonlinear elliptic, parabolic PDEs and distributions.
We compare our approach with other techniques for computing shape derivatives including the material derivative method and  the averaged adjoint method.
\end{abstract}
\maketitle
\keywords{\textbf{Keywords.} shape optimization; Lagrangian methods; distributed shape derivatives}
\section{Introduction}\label{sec:intro}
In shape optimization the most commonly encountered problems consist of the minimization of a cost functional with respect to a geometric variable, where the functional depends on the solution of a partial differential equation (PDE) or a system of PDEs.  
Variational inequality-constraints have also been considered but the literature on this topic is much more scarce, and the variational inequality is often regularized, which leads to a sequence of shape optimization problems with equality constraints. 
We refer to the recent works \cite{MR3878790,MR3562369,HintLaur11,LWY,Luft2020} and the references therein for more details on this topic.
Thus, we observe  that many shape optimization problems fit into the framework of optimization problems with equality constraints.

Lagrangian methods are an efficient way to tackle such problems. 
Their use  in shape optimization has been pioneered by Jean C\'ea in \cite{MR862783}
and was further developed by Delfour and Zol\'esio via a minimax formulation \cite{MR2731611}.
More recently, the averaged adjoint method (AAM) introduced in \cite{MR3374631} has proven to be a versatile and efficient Lagrangian-type approach to compute shape derivatives. 
Other approaches to compute shape derivatives include the material derivative method \cite{MR1215733}, 
the rearrangement method introduced in \cite{MR2434064}, and more recently \cite{MR3274847,MR3489049} for shape functionals defined  as the infimum of an integral energy, using convex analysis and Gamma-convergence techniques. 
We refer to \cite{MR3374631} for a detailed discussion and comparisons of these methods.
In view of the variety of possible approaches to compute shape derivatives, the  questions of the scope, ease of use and relations between these methods arise naturally.
For instance, the recently proposed AAM is fairly general but requires to study certain properties of the so-called averaged adjoint, which may lead to rather lengthy proofs.
In this paper we revisit the classical Lagrangian approach in shape optimization which allows us to answer some of these questions; we focus in particular on investigating the relations between the material derivative method, the classical Lagrangian approach and the AAM.

One viewpoint is particularly recurrent in  the shape optimization literature: 
it is often argued that, unlike the material derivative method, Lagrangian approaches allow to bypass the computation of the material or shape derivative of the state  (i.e.  the solution of the PDE constraint).
This aspect was emphasized in the pioneering work \cite{MR862783}, where the goal was to provide a fast formal method to compute shape derivatives.
In \cite{MR3374631}, the explicit  motivation for introducing the AAM is to bypass the differentiability of the control-to-solution operator for optimization problems with equality constraints.
Our investigation sheds a new light on this common opinion, indeed we show in a fairly general setting that
the adjoint equation and the material derivative equation are dual in a certain sense, and that, under the regularity assumptions guaranteeing the existence of the shape derivative of the cost functional, one also obtains  the existence of both the adjoint and the material derivative of the state.
The main origin of this opinion is probably related to the standard technique for computing shape derivatives of cost functionals, which is not based on a Lagrangian approach but consists in directly computing the shape derivative of the reduced cost functional using the chain rule and the shape derivative of the state.
The drawback of this approach is that in low-regularity scenarios it is common that the shape derivative of the cost functional can be computed even though the shape derivative of the state does not exist, see for instance the example in \cite{MR2434064}.
Thus, bypassing the computation of the shape derivative of the state using a Lagrangian approach seems to be advantageous in these situations.
Nevertheless, it turns out that in such scenarios the material derivative of the state usually exists, although it might have a low regularity;
this is due to the fact that the existence of the  shape derivative of the state requires stronger regularity assumptions than the existence of the material derivative.
In fact, our main result shows that, at least within the framework of the standard implicit function theorem,  the material derivative somehow comes ``for free'' when computing shape derivatives of cost functionals with PDE constraints. 
Thus, we conclude that the point of Lagrangian approaches is not really to bypass the computation of the material derivative,   but rather 
to provide a straightforward way to compute the adjoint and the shape derivative of the cost functional.

The implicit function theorem is a key ingredient for obtaining  first-order optimality conditions in the theory of optimization with  equality constraints and Lagrange multipliers, and the core idea behind our main result is to revisit the classical Lagrangian approach in shape optimization via this theorem.
In shape optimization, it is often  used to compute the material derivative and indirectly the shape derivative of the solution of a PDE or of an eigenvalue, but on a case by case basis, see for instance several applications in \cite{MR3791463}.
For instance in  the material derivative method \cite{MR1215733}, the material derivative of the state and then its shape derivative are calculated first, and subsequently the adjoint state is inferred, which allows to compute the shape derivative of the cost functional.
The formal  Lagrange method  of C\'ea \cite{MR862783} is also based on the assumption that the shape derivatives of the state and adjoint state exist, hence this approach sometimes fails when the regularity of the  data is too low, see the discussion and example in \cite{MR3374631}.

The main contribution of the present paper is to  provide  a more systematic approach in the form of an abstract framework for applying the implicit function theorem, within a Lagrangian setting, to compute derivatives of shape functionals in the case of equality constraints.
This allows to gain insight on the underlying structure connecting the material derivative of the state, the adjoint state and the shape derivative of the cost function, and to explore with more precision the question of minimal regularity assumptions. 
Also, in this paper we depart from certain standards of the shape optimization literature in two notable ways.
First of all, in our approach the focus is completely shifted to the material derivative instead of the shape derivative of the state.
Indeed, the material derivative of the state appears as a natural byproduct of the abstract Lagrangian framework rather than the shape derivative of the state. 
Second, in the various examples presented in this paper we systematically favor weak forms of the shape derivatives using a tensorial representation in the spirit of \cite{LAURAIN2020328,MR3535238}, rather than the usual boundary expressions, also known as Hadamard formulas.
This is motivated by the fact that the weak form of the shape derivative, also called  distributed shape derivative, requires in general  weaker regularity assumptions about the geometry, and that the corresponding boundary expression can immediately be inferred from the distributed shape derivative using the techniques of \cite{LAURAIN2020328,MR3535238}.
Another interesting feature of the paper is to present two examples of second-order tensor representation for distributed shape derivatives, thus giving concrete applications for the theoretical tensor representations of arbitrary order considered in \cite{MR3535238}.

In \cite{MR3374631} it is claimed that in many situations the
implicit function theorem is not applicable;
it is nevertheless not clear when exactly it can or cannot be used.
Through our main result and various examples and comparisons, the limits of applicability of the implicit function theorem and the purpose of the various methods described above for computing shape derivatives become much clearer.
Our main conclusion is that the Lagrangian approach based on  the standard implicit function theorem is relatively straightforward and easy to use, but is limited to the case where a strong material derivative exists.
We show that the hypotheses of the AAM \cite{MR3374631} and of the rearrangement method \cite{MR2434064} are more difficult to check but allow to work with weak material derivatives.
Still, we demonstrate that the abstract Lagrangian framework covers a broad range of relevant situations.
Its versatility is demonstrated here by the application to the calculation of distributed shape derivatives for several problems involving linear elliptic, nonlinear elliptic, linear parabolic PDEs, distributions, and cost functionals with higher-order derivatives.

The paper is organized as follows. 
In Section \ref{sec:lagrangian} we give several basic definitions and lemmas, and introduce the notations used in the paper.
Then, we describe the abstract Lagrangian setting and give the main result of the paper for computing shape derivatives within this framework.
In Section \ref{sec:bilinear}, we apply our main result to several linear and nonlinear elliptic problems, including   problems involving distributions and cost functionals with second-order derivatives, see Sections \ref{sec:distribution2} and \ref{sec:higher-order}, respectively.  
In Section \ref{sec:time}, we show applications involving parabolic problems.
In Section \ref{sec:aam}, we compare our approach with the AAM and the variational approach of \cite{MR2434064}.
We also provide an Appendix for technical results.
\pagebreak
\section{Abstract Lagrangian framework for shape optimization}\label{sec:lagrangian}
\subsection{General  notations and results}\label{section1a0}
We start by introducing notations for first, second and third-order tensors, and recall some basic rules of tensor calculus that will be used throughout the paper.
For sufficiently smooth  $\Om\subset\R^d$, $n$ denotes  the outward unit normal to $\Om$.
For vector-valued functions $a,b,c:\Om\to \R^d$ and second order tensors $\Sb:\Om\to\R^{d\times d}$ 
and $\Tb:\Om\to\R^{d\times d}$ whose entries are denoted by $\Sb_{ij}$ and $\Tb_{ij}$, the double dot product of $\Sb$ and $\Tb$ is defined as $\Sb : \Tb = \sum_{i,j = 1}^d \Sb_{ij}\Tb_{ij}$,
and the outer product $a\otimes b$ is defined as the second order tensor with entries $[a\otimes b]_{ij} = a_i b_j$.
For $\alpha\in\mathds{N}_{0}^{d}$ and $x\in\R^d$, we use the multi-index notation $x^{\alpha}=x_{1}^{\alpha_{1}}\dots x_{d}^{\alpha_{d}}$ and $\partial^{\alpha}=\frac{\partial^{|\alpha|}}{\partial x_{1}^{\alpha_{1}}\dots \partial x_{d}^{\alpha_{d}}}$,
with $|\alpha|=\alpha_{1}+\dots+\alpha_{d}$.
We also introduce the following notations:
\begin{itemize}
 \item $S^\transp$ for the transpose of $S$,
 \item $\tr$ for the trace of a matrix,
\item $I_d$ for the identity matrix in $\R^{d\times d}$,
 \item $\Idd: x\mapsto x$ for the identity in $\R^d$,
\item $Da$ for the Jacobian matrix of $a$,
 \item $D_\Gamma a := Da - (Da) n\otimes n$ for the tangential derivative on $\po$,
 \item $\divv_\Gamma a := \divv a - (Da) n\cdot  n$ for the tangential divergence on $\po$,
\end{itemize}

For third-order tensors $\mathbb{S}:\Om\to\R^{d\times d\times d}$ and $\mathbb{T}:\Om\to\R^{d\times d\times d}$ whose entries are denoted by $\mathbb{S}_{ijk}$ and $\mathbb{T}_{ijk}$, the triple dot product of $\mathbb{S}$ and $\mathbb{T}$ is defined as $\mathbb{S} \tp \mathbb{T}= \sum_{i,j,k = 1}^d \mathbb{S}_{ijk}\mathbb{T}_{ijk}$.
The outer product $a\otimes \Tb$ is defined as the third order tensor with entries $[a\otimes \Tb]_{ijk} = a_i \Tb_{jk}$.
Following the notations of \cite{qi2017transposes}, and using Einstein summation convention,
\begin{itemize}
 \item $\mathbb{S} c$ represents the matrix with entries  $\mathbb{S}_{ijk}c_k$,
 \item $\mathbb{S} bc$ represents the vector with entries  $\mathbb{S}_{ijk}b_j c_k$,
 \item $a\mathbb{S} bc$ represents the scalar  $a_i \mathbb{S}_{ijk}b_j c_k $, and notice that we have $a\mathbb{S} bc = (\mathbb{S} c)b \cdot a$. 
\end{itemize}
\begin{definition}\label{def:transpose_third}
Let  $\mathbb{S}\in\R^{d\times d\times d}$ and $\mathbb{T}\in\R^{d\times d\times d}$ be two third-order tensors satisfying
$$ a \mathbb{S} bc = b \mathbb{T} c a \quad\text{ for all } a, b,c\in\R^d.$$
Then we call $\mathbb{T}$ the transpose of $\mathbb{S}$ and we write $\mathbb{T} = \mathbb{S}^\transp$. 
Using indicial notations we have $\mathbb{T}_{ijk} = \mathbb{S}_{kij}$. 
\end{definition}
It can be shown that the transpose of $\mathbb{S}$ always exists and is unique; see
\cite[Proposition 3.1]{qi2017transposes}.
Note that in general we have $\mathbb{S}^{\transp\transp}\neq \mathbb{S}$ and $\mathbb{S}^{\transp\transp\transp} = \mathbb{S}$.

\begin{lemma}[Tensor calculus]\label{lemma_tensor}
For $\Om\subset \R^d$ open,  vector-valued functions $a,b,c,d:\Om\to \R^d$, second order tensors $\Sb,\Tb,\Ub:\Om\to\R^{d\times d}$ and third-order tensor $\mathbb{S}:\Om\to\R^{d\times d\times d}$, we have
\begin{itemize}
\item $\Sb : (a\otimes b) = a\cdot \Sb b = \Sb^\transp a \cdot  b = \Sb^\transp : (b\otimes a)$,
\item $\Sb(a\otimes b) = \Sb a \otimes b$\quad  and\quad $(a\otimes b) \Sb =  a \otimes \Sb^\transp b$, 
\item $(a\otimes b) c = (c\cdot b) a$,
\item $(a\otimes b) : (c\otimes d) = (a\cdot c) (b\cdot d)=(c\otimes b) : (a\otimes d) = c\cdot (a\otimes d)b$,
\item $\Sb \Tb: \Ub = \Tb: \Sb^\transp\Ub$,
\item $\mathbb{S}^{\transp} a: \Tb = \mathbb{S}\tp (a\otimes \Tb)$.
\end{itemize}
\end{lemma}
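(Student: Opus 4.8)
The plan is to verify each identity by reducing it to an elementary computation in components, using the Einstein summation convention together with the definitions of the double dot product, the outer product, and the matrix--vector and matrix--matrix products recalled above. The only structural facts I would isolate beforehand are the two transpose rules: $\Sb^{\transp}_{ij} = \Sb_{ji}$ for second-order tensors, and $\mathbb{S}^{\transp}_{ijk} = \mathbb{S}_{kij}$ for third-order tensors, the latter being exactly the content of Definition \ref{def:transpose_third}. Once these are in hand, every identity becomes a matter of writing both sides as a single multiply-indexed sum and matching terms after a suitable relabelling of the dummy indices.

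For the second-order identities I would proceed top to bottom. For the first line, expanding gives $\Sb:(a\otimes b) = \Sb_{ij}a_i b_j = a_i(\Sb b)_i = a\cdot \Sb b$, and applying $\Sb^{\transp}_{ij}=\Sb_{ji}$ turns the same sum into $(\Sb^{\transp}a)_j b_j = \Sb^{\transp}a\cdot b$, while reading it as a double dot product against $b\otimes a$ yields the last equality. The remaining items (the two products $\Sb(a\otimes b)$ and $(a\otimes b)\Sb$, the contraction $(a\otimes b)c=(c\cdot b)a$, and the quadruple product of two outer products) are obtained by the same one-line expansions. The identity $\Sb\Tb:\Ub = \Tb:\Sb^{\transp}\Ub$ is the only one that mixes two matrix products: writing $\Sb\Tb:\Ub = \Sb_{ik}\Tb_{kj}\Ub_{ij}$ and regrouping as $\Tb_{kj}\,\Sb_{ik}\Ub_{ij} = \Tb_{kj}(\Sb^{\transp}\Ub)_{kj}$ gives the claim.

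The main obstacle, such as it is, lies in the final, third-order identity $\mathbb{S}^{\transp}a:\Tb = \mathbb{S}\tp(a\otimes\Tb)$, where care with the index permutation is required. Here I would compute $(\mathbb{S}^{\transp}a)_{ij} = \mathbb{S}^{\transp}_{ijk}a_k = \mathbb{S}_{kij}a_k$ using Definition \ref{def:transpose_third}, so that $\mathbb{S}^{\transp}a:\Tb = \mathbb{S}_{kij}a_k\Tb_{ij}$, and compare with $\mathbb{S}\tp(a\otimes\Tb) = \mathbb{S}_{ijk}a_i\Tb_{jk}$; a cyclic relabelling of the dummy indices then identifies the two sums. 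This bookkeeping is the only place where the cyclic nature of the third-order transpose is genuinely used, and it is what makes the last identity slightly less mechanical than the preceding ones. Collecting the six verified identities completes the proof.
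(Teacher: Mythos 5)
Your proposal is correct: each identity checks out under the paper's conventions (in particular your use of $\mathbb{S}^{\transp}_{ijk}=\mathbb{S}_{kij}$ matches Definition \ref{def:transpose_third}, and the cyclic relabelling in the last identity is exactly right). The paper states Lemma \ref{lemma_tensor} without proof, treating these as standard facts, and your component-wise verification via Einstein summation is precisely the elementary argument the paper implicitly relies on, so there is nothing to compare beyond noting that you supplied the omitted details.
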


Let $E$ and $F$ be two Banach spaces. For bounded linear operators
we use the following notations:
\begin{itemize}
\item The set $\mathcal{L}(E,F)$ denotes the space of continuous linear
maps from $E$ to $F$.
\item The notation $E^{*}:=\mathcal{L}(E,\R)$ is used for the dual space
of $E$.
\item We call $A\in\mathcal{L}(E,F)$ an isomorphism if it is bijective.
Due to the closed graph theorem $A^{-1}\in\mathcal{L}(F,E)$.
\end{itemize}
\begin{definition}[Adjoint]\label{def:adjoint}
Let $E$ and $F$ be Banach spaces and suppose that $F$ is reflexive. 
We define the adjoint of $A\in\mathcal{L}(E,F^{*})$, denoted by $A^{*}\in\mathcal{L}(F,E^{*})$, as 
\[
\langle A^{*}g,h\rangle_{E^{*},E}=\langle Ah,g\rangle_{F^{*},F}.
\]
\end{definition}
Note that the usual definition of the adjoint requires $A^{*}\in\mathcal{L}(F^{**},E^{*})$, but since $F$ is reflexive we have $A^{*}\in\mathcal{L}(F,E^{*})$ in Definition \ref{def:adjoint}.
The specific choice of $A\in\mathcal{L}(E,F^{*})$ is motivated by applications to shape optimization problems, see Section~ \ref{sec:lag}.
The following property is a key ingredient for the main result of this paper, see Theorem~\ref{thm1}.
\begin{lemma}\label{lem-adjoint}
Let $E$ and $F$ be Banach spaces  and suppose that $F$ is reflexive. Then $A\in\mathcal{L}(E,F^{*})$
is an isomorphism if, and only if, $A^{*}\in\mathcal{L}(F,E^{*})$
is an isomorphism. 
\end{lemma}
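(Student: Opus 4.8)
The plan is to reduce the statement to the classical duality between an operator and its Banach-space transpose, which holds for arbitrary Banach spaces, and to absorb the reflexivity of $F$ into a single canonical identification. Write $A'\in\mathcal{L}(F^{**},E^{*})$ for the usual transpose of $A$, defined by $\langle A'\xi,h\rangle_{E^{*},E}=\langle \xi,Ah\rangle_{F^{**},F^{*}}$, and let $J_{F}:F\to F^{**}$ be the canonical embedding, which is an isomorphism precisely because $F$ is reflexive. The first step is a direct computation from Definition~\ref{def:adjoint}: for $g\in F$ and $h\in E$,
\[
\langle A'(J_{F}g),h\rangle_{E^{*},E}=\langle J_{F}g,Ah\rangle_{F^{**},F^{*}}=\langle Ah,g\rangle_{F^{*},F}=\langle A^{*}g,h\rangle_{E^{*},E},
\]
so that $A^{*}=A'\circ J_{F}$. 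Since $J_{F}$ is an isomorphism, this already shows that $A^{*}$ is an isomorphism if, and only if, $A'$ is.

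It then remains to establish the classical fact that $A$ is an isomorphism if, and only if, $A'$ is; note that this equivalence requires no reflexivity and takes place between the genuinely different pairs $E,F^{*}$ and $F^{**},E^{*}$. The forward direction is purely algebraic: if $A$ is an isomorphism, then $(A^{-1})'$ is a two-sided inverse of $A'$, because transposition reverses composition and fixes identities, i.e. $A'(A^{-1})'=(A^{-1}A)'=\Idd$ and $(A^{-1})'A'=(AA^{-1})'=\Idd$. The reverse implication, that $A'$ being an isomorphism forces $A$ to be one, is where the real content lies, and I expect it to be the main obstacle since it cannot be done by algebra alone. I would argue it through the closed range theorem together with Hahn--Banach: injectivity of $A'$ is equivalent to $A$ having dense range (its kernel is the annihilator of the range of $A$), while surjectivity of $A'$ is equivalent, by the closed range theorem, to $A$ being injective with closed range; a range that is simultaneously dense and closed is all of $F^{*}$, so $A$ is bijective and hence an isomorphism by the open mapping theorem.

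Chaining the two equivalences yields $A$ isomorphism $\iff A'$ isomorphism $\iff A^{*}$ isomorphism, which is the claim.

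Finally, I would flag one subtlety that this argument bypasses. It is tempting to prove the lemma by ``taking the adjoint twice'', but Definition~\ref{def:adjoint} requires reflexivity of the predual of the target space, and $E$ is \emph{not} assumed reflexive, so $(A^{*})^{*}$ is not directly available. The reduction through the standard transpose avoids this issue entirely; in fact reflexivity of $E$ is never used, and one checks a posteriori that once $A$ (equivalently $A^{*}$) is an isomorphism, $E$ is automatically reflexive.
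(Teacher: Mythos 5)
Your proof is correct, and its backbone is the same as the paper's: both reduce the lemma to the classical duality between $A$ and its Banach-space transpose $A'\in\mathcal{L}(F^{**},E^{*})$, and then use reflexivity of $F$ to pass from $A'$ to $A^{*}$. The difference is that the paper's proof is essentially a citation --- it invokes Kato for the equivalence ``$A$ isomorphism $\iff$ $A'$ isomorphism'' and then writes $F^{**}=F$ without further comment --- whereas you supply the missing content. Concretely, you (i) make the identification precise through the factorization $A^{*}=A'\circ J_{F}$, which isolates exactly where reflexivity enters (only through invertibility of $J_{F}$), and (ii) prove the classical equivalence itself: the forward direction by the algebra of transposes, and the substantive converse via the closed range theorem and Hahn--Banach ($A'$ injective $\iff$ $A$ has dense range; $A'$ surjective $\iff$ $A$ injective with closed range), finishing with the open mapping theorem. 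All of these steps check out. Your final remark is also a genuine point that the paper's terse phrasing glosses over: since $E$ is not assumed reflexive, Definition~\ref{def:adjoint} does not produce an adjoint of $A^{*}$, so a ``take the adjoint twice'' argument is unavailable, and reflexivity of $E$ is only recovered a posteriori from $E\cong F^{*}$ once the lemma is proved. In short, the paper buys brevity by outsourcing the classical fact; your version buys self-containment and a clear accounting of which theorems (closed range, Hahn--Banach, open mapping) carry the load.
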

\begin{proof}
It is well-known that $A:E\to F^{*}$ is an isomorphism if and only
if $A^*:F^{**}\to E^{*}$ is an isomorphism, see \cite[Chapter
3, Section 3.3]{kato2013perturbation}. Since we have assumed that $F$ is a reflexive
space, we have $F^{**}=F$. 
\end{proof}

\subsection{Shape calculus tools}\label{section1a}
In this section, we recall standard notations and basic results about perturbations of open sets using diffeomorphisms and  Eulerian shape derivatives.
Let $\mathds{P}(\hold)$ be the set of open sets compactly contained in $\hold$, where $\hold\subset \Rd$ is assumed to be open and bounded.
We define, for $k\geq 0$ and $0\leq \alpha \leq 1$,
\begin{align*}
\C^{k,\alpha}_c(\hold,\Rd) &:=\{\ta\in \C^{k,\alpha}(\hold,\Rd)\ |\ \ta\text{ has compact support in } \hold\},
\end{align*}
and $\C^k_c(\hold,\Rd)$, $\C^\infty_c(\hold,\Rd)$ in a similar way.
Consider a vector field $\ta\in \C^{0,1}_c(\hold,\Rd)$ and the associated flow
$\Tt^{\ta}:\hold\rightarrow \hold$, $s\in [0,s_1]$, defined for each $x_0\in \hold$ as $\Tt^{\ta}(x_0):=x(s)$, where $x:[0,s_1]\rightarrow \R^d$ solves 
\begin{align}\label{Vxt}
\begin{split}
\dot{x}(s)&= \ta(x(s))    \quad \text{ for } s\in [0,s_1],\quad  x(0) =x_0.
\end{split}
\end{align}
Since  $\ta\in \C^{0,1}_c(\hold,\Rd)$ we
have by Nagumo's theorem \cite{MR0015180} that, for each $s\in [0,s_1]$, the flow $\Tt^{\ta}$ is a homeomorphism from $\hold$ to $\hold$ and
maps boundary onto boundary and interior onto interior.
To compute shape derivatives, we only need to consider $T_s$ in an arbitrary small interval $[0,s_1]$.
For $\Om\in \mathds{P}(\hold)$, 
we consider the family of perturbed open sets  
\begin{equation}\label{domain}
\Om_s := \Tt^{\ta}(\Om)\in \mathds{P}(\hold). 
\end{equation}
Note that $\Om_{0}=T_{0}^\theta(\Om)=\Om$, as $T_{0}^\theta=\Idd$, and
we often write $\Tt$ instead of $\Tt^{\ta}$ for simplicity.

\begin{definition}[Eulerian shape derivative]\label{def1}
Let $J : \mathds{P}(\hold) \rightarrow \R$ be a shape functional.
\begin{itemize}
\item[(i)] The Eulerian semiderivative of $J$ at $\Om$ in direction $\ta \in \C^{0,1}_c(\hold,\Rd)$
is defined by, when the limit exists,
\begin{equation}
d J(\Om)(\ta):= \lim_{s \searrow 0}\frac{J(\Om_s)-J(\Om)}{s}.
\end{equation}
\item[(ii)] $J$ is said to be \textit{shape differentiable} at $\Om$ if it has a Eulerian semiderivative at $\Om$ for all $\ta \in \C^\infty_c(\hold,\Rd)$ and the mapping
\begin{align*}
d J(\Om): \C^\infty_c(\hold,\Rd) &  \to \R,\; \ta     \mapsto d J(\Om)(\ta)
\end{align*}
is linear and continuous, in which case $d J(\Om)(\ta)$ is called the \textit{Eulerian shape derivative} or simply \textit{shape derivative} at $\Om$.
\end{itemize}
\end{definition}

We now introduce several notations and well-known differentiability results which will be often used throughout the paper for the computation of shape derivatives.
\begin{definition}\label{def5}
Let $\Om\in \mathds{P}(\hold)$,  $\ta\in \C^1_c(\hold,\Rd)$ and consider the associated flow
$\Tt:\hold\rightarrow \hold$, $s\in [0,s_1]$.
Then we define the Jacobian $\xi(s) :=|\det DT_s|$ in $\hold$
and the tangential Jacobian $\xi_{\Gamma}(s) := |\det DT_s|\cdot |DT_s^{-\transp} n| $ on $\po$; see \cite[(4.11), p. 484]{MR2731611}.
For $s_1$ sufficiently small we have $\xi(s) =\det DT_s$  for all $s\in [0,s_1]$.
For a matrix $Q\in\R^{d\times d}$, we also define $\M(s,Q) := \xi(s)DT_s^{-1} Q DT_s^{-\transp}$.
\end{definition}
\begin{lemma}\label{lem01}
Let $\ta\in \C^k_c(\hold,\Rd)$, $k\geq 1$, and consider the associated flow
$\Tt:\hold\rightarrow \hold$, $s\in [0,s_1]$.
Let $\Om\in \mathds{P}(\hold)$ be Lipschitz and $n$ be the unit outward normal to $\Om$.
Then  
\begin{itemize}
 \item $s\mapsto\xi(s)\in \C^1([0,s_1], \C^{k-1}(\hold))$, $\xi(0)=1$  and  $\xi'(0) = \divv(\ta)$;
 \item $s\mapsto\xi_\Gamma(s)\in \C^1([0,s_1], \C^0(\po))$, $\xi_\Gamma(0)=1$  and
 $\xi'_\Gamma(0) = \divv_\Gamma(\ta): = \divv(\ta) - D\ta n\cdot n$;
 \item $s\mapsto T_s\in \C^1([0,s_1], \C^k(\hold, \R^{d}))$; 
 \item $s\mapsto DT_s\in \C^1([0,s_1], \C^{k-1}(\hold, \R^{d\times d}))$  with $DT_0 = I_d$ and $\partial_s DT_s|_{s=0} = D\ta$;
 \item  $s\mapsto DT_s^{-1}\in \C^1([0,s_1], \C^{k-1}(\hold, \R^{d\times d}))$  with $\partial_s DT_s^{-1}|_{s=0} = -D\ta$;
 \item If $k\geq 2$ then $s\mapsto D^2T_s\in \C^1([0,s_1], \C^{k-2}(\hold, \R^{d\times d\times d}))$;
 \item $\M'(0,Q) := \partial_s \M(0,Q) = \divv(\ta)Q - D\ta Q - Q D\ta^\transp$.
\end{itemize}
\end{lemma}
\begin{proof}
Applying \cite[Theorem 4.4, p. 189]{MR2731611} in the particular case $\ta\in \C^k_c(\hold,\Rd)$ for $k\geq 1$ we obtain that $s\mapsto T_s\in \C^1([0,s_1], \C^k(\hold, \R^{d}))$ which also yields the desired differentiability properties of  $s\mapsto DT_s$ and $s\mapsto D^2T_s$.
Since $DT_0 =I_d$ we obtain $s\mapsto DT_s^{-1}\in \C^1([0,s_1], \C^{k-1}(\hold, \R^{d\times d}))$ for $s_1$ sufficiently small.
The differentiability properties of $s\mapsto\xi(s)$ can be found for instance in \cite[Theorem 4.1, p. 482]{MR2731611}, and see \cite[Section 4.2, pp. 484-485]{MR2731611} for $s\mapsto\xi_\Gamma(s)$.
\end{proof}

\subsection{Abstract Lagrangian framework for shape optimization}\label{sec:lag}
Let $\Om\in \mathds{P}(\hold)$,   $E = E(\Om), F=F(\Om)$ be two Banach spaces with $F$ reflexive, and consider a parameterization 
$\Om_s$ defined by \eqref{domain} for $s\in [0,s_{1}]$ for some  $s_{1}>0$ and $\ta\in \C^{0,1}_c(\hold,\Rd)$. 
Shape optimization problems often consist in the minimization of a shape functional $J : \mathds{P}(\hold) \rightarrow \R$  depending on a state $u_s\in E(\Om_s)$ defined as the  solution of a partial differential equation (PDE), which can be seen as a minimization problem with an equality constraint.
It is then common to reformulate the constrained optimization problem as an unconstrained optimization problem with a linear penalization in the form of a Lagrangian functional $\cL$ satisfying $J(\Om_s) = \cL(\Om_s, u_s,\hat\psi)$,
where  $\hat\psi\in F(\Om_s) $.
The Lagrangian $\cL$ is composed of the cost functional and the variational formulation of the PDE. 
The particularity of shape optimization problems is that a pullback $\Psi_s$ is used in order to work on the fixed open set $\Om$ instead of $\Om_s$.
The pullback $\Psi_{s}:E(\Om_{s})\to E(\Om)$ or  $\Psi_{s}:F(\Om_{s})\to F(\Om)$  often corresponds to the precomposition $\Psi_{s}v(x):=v\circ T_{s}(x)$, where $T_{s}:\Om\to\Om_{s}$ is a diffeomorphism, but can also be a more complicated diffeomorphism, for instance when the spaces $H(\divv)$ and $H(\operatorname{curl})$ are involved; see \cite{MR3350625}.
Applying these pullbacks to the Lagrangian, one defines the so-called {\it shape-Lagrangian} as
$$ \G(s,\vp,\psi) :=  \cL(\Om_s, \Psi_s^{-1}(\vp),\Psi_s^{-1}(\psi))\ \text { for all } (s,\vp,\psi)\in [0,s_{1}]\times E(\Om)\times F(\Om).$$
This yields in particular
\begin{equation}\label{eq:JG}
 J(\Om_s) = \G(s,u^s,\psi)\ \text { for all }\psi\in F(\Om), 
\end{equation}
with $u^s :=\Psi_s(u_s)\in E(\Om)$.

In this paper we consider the following specific form for $\G:[0,s_{1}]\times E \times F \rightarrow \R$ for some $s_1>0$,
which is appropriate for shape optimization problems with equality constraints:
\begin{equation}
\label{G_lag}
\G(s,\vp,\psi):= \langle \A(s,\vp),\psi \rangle_{F^*,F}
  + \B(s,\vp),
\end{equation}
where
$$ \A:[0,s_{1}]\times E \rightarrow F^{*} \qquad \textrm{and} \qquad \B:[0,s_{1}]\times E \rightarrow \R .$$
Here,  $\B$ arises from the shape functional $J$ to be minimized, while $\A$ is derived from the variational formulation of the PDE via the pullback $\Psi_{s}$.

We can now state Theorem \ref{thm1}, the main result of this paper, whose relevance  is twofold.
The first feature is to give the practical formulae \eqref{dsG} for the shape derivative of the cost functional, \eqref{eq_material_der} for the material derivative of the state, and \eqref{adj_eq} for the adjoint state, with only a few natural conditions on the derivatives of $\A$ and $\B$ to be verified.
The second feature is to show the duality between the material derivative $\dot u$ and the adjoint $p$, in the sense that $\dot u\in E$ always exists under these natural conditions if the space $E$ is ``sufficiently large''; see the example of Section \ref{sec:distribution2}.
\begin{theorem}\label{thm1}
Let $\G:[0,s_{1}]\times E \times F \rightarrow \R$ be defined as in \eqref{G_lag} with $\A\in \C^{1}([0,s_1]\times E,F^{*})$, $u\in E$ be such that $\A(0,u)=0$ and $\B:[0,s_1]\times E\to\R$ be differentiable at $(0,u)$. 
Denote by $A(u):=\partial_{\vp}\A(0,u)\in\mathcal{L}(E,F^{*})$, $L(u) :=\partial_{s}\A(0,u)\in F^{*}$ and $\bl(u):=\partial_{\vp}\B(0,u)\in E^{*}$. 

Suppose that the linear operator $A(u):E\to F^{*}$ is an isomorphism. Then, there exists $s_{0}\in (0,s_{1}]$ and a unique $\C^{1}$
function $\left[0,s_{0}\right]\ni s \mapsto u^s \in E$ such that $u^{0}=u$
and $\A(s,u^{s})=0$ for all $s\in [0,s_{0}]$. 
Also, the  derivative $\dot u\in E$ of  $s\mapsto u^s$ at $s=0$ exists and $\dot u\in E$  is the unique solution of 
\begin{equation}\label{eq_material_der}
A(u)\dot{u}=-L(u) \in F^*.
\end{equation}
Now let $\ta\in \C^{0,1}_c(\hold,\Rd)$, $\Om_s$ defined in \eqref{domain}, and suppose that $J(\Om_s)$ can be written as in \eqref{eq:JG}.
Then under the above assumptions the shape derivative of $J(\Om)$ in direction $\ta$ is given by
\begin{equation}\label{dsG}
 dJ(\Om)(\ta) = \partial_s \G(0,u,p) =\langle L(u),p \rangle_{F^*,F} + \partial_s \B(0,u), 
\end{equation}
where the adjoint state $p\in F$ is the unique solution of 
\begin{equation}\label{adj_eq}
A(u)^{*}p=-\bl(u) \in E^*,
\end{equation}
and $A(u)^{*}: F\to E^*$ is the adjoint of  $A(u):E\to F^{*}$.
\end{theorem}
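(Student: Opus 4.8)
The plan is to proceed in three stages that mirror the three conclusions of the statement: existence and $\C^1$ regularity of the state path $s\mapsto u^s$, the material derivative equation \eqref{eq_material_der}, and finally the shape derivative formula \eqref{dsG} together with the adjoint equation \eqref{adj_eq}.

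First I would invoke the implicit function theorem in Banach spaces for the map $\A\in \C^1([0,s_1]\times E,F^*)$ at the point $(0,u)$. The hypotheses match exactly: $\A(0,u)=0$ and the partial differential $\partial_{\vp}\A(0,u)=A(u)$ is an isomorphism of $E$ onto $F^*$. This yields some $s_0\in(0,s_1]$ and a unique $\C^1$ curve $s\mapsto u^s$ with $u^0=u$ and $\A(s,u^s)=0$ on $[0,s_0]$; the fact that the theorem produces a two-sided neighbourhood of $s=0$ while we only keep the one-sided interval $[0,s_0]$ is harmless. Differentiating the identity $\A(s,u^s)=0$ at $s=0$ by the chain rule gives $\partial_s\A(0,u)+\partial_{\vp}\A(0,u)\dot u=0$, i.e. $L(u)+A(u)\dot u=0$, which is \eqref{eq_material_der}; uniqueness of $\dot u$ is immediate since $A(u)$ is invertible.

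For the shape derivative I would start from \eqref{eq:JG}. Because $\A(s,u^s)=0$, the pairing $\langle\A(s,u^s),\psi\rangle_{F^*,F}$ vanishes for every $\psi\in F$, so by \eqref{G_lag} we have $J(\Om_s)=\G(s,u^s,\psi)=\B(s,u^s)$ independently of $\psi$; in particular I may take $\psi=p$ for the adjoint $p$ to be chosen below. I would then differentiate $s\mapsto\G(s,u^s,p)$ at $s=0$. Using \eqref{G_lag} and the chain rule, the derivative equals
\[
\langle L(u)+A(u)\dot u,p\rangle_{F^*,F}+\partial_s\B(0,u)+\langle \bl(u),\dot u\rangle_{E^*,E}.
\]
The decisive step is to rewrite $\langle A(u)\dot u,p\rangle_{F^*,F}=\langle A(u)^*p,\dot u\rangle_{E^*,E}$ via Definition \ref{def:adjoint}, and then to \emph{define} $p$ by requiring $A(u)^*p+\bl(u)=0$, which is precisely \eqref{adj_eq}. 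By Lemma \ref{lem-adjoint} the reflexivity of $F$ ensures that $A(u)^*$ is an isomorphism, so such a $p$ exists and is unique. With this choice the two terms containing $\dot u$ cancel, leaving $dJ(\Om)(\ta)=\langle L(u),p\rangle_{F^*,F}+\partial_s\B(0,u)=\partial_s\G(0,u,p)$, which is \eqref{dsG}.

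The step I expect to require the most care is the chain-rule differentiation of the $\B$-term, since $\B$ is only assumed differentiable at the single point $(0,u)$ and not continuously differentiable on a neighbourhood. This is resolved by applying the chain rule in the form valid for a map that is differentiable at one point precomposed with the $\C^1$ curve $s\mapsto(s,u^s)$, whose value at $s=0$ is $(0,u)$ and whose velocity there is $(1,\dot u)$; this delivers the contribution $\partial_s\B(0,u)+\langle\bl(u),\dot u\rangle_{E^*,E}$ without requiring $\C^1$ regularity of $\B$. The remaining verification, namely that $\langle\A(s,u^s),p\rangle_{F^*,F}$ is genuinely $\C^1$ in $s$ as a composition of $\C^1$ maps with the continuous linear functional $\langle\cdot,p\rangle_{F^*,F}$, is routine.
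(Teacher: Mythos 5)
Your proposal is correct and follows essentially the same route as the paper's proof: the implicit function theorem applied to $\A$ at $(0,u)$ yields the $\C^1$ path $s\mapsto u^s$ and the material derivative equation, and then differentiating $s\mapsto\G(s,u^s,p)$ with $p$ chosen as the solution of $A(u)^*p=-\bl(u)$ (which exists and is unique by Lemma \ref{lem-adjoint}) cancels the terms involving $\dot u$ and gives \eqref{dsG}. Your closing remark on the chain rule for $\B$, which is only assumed differentiable at the single point $(0,u)$, is in fact handled more explicitly than in the paper's own proof, which simply invokes differentiability of $\B$ at $(0,u)$ at that step.
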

\begin{proof}
Since by assumption $\A(0,u)=0$ and  $\partial_{\vp}\A(0,u) = A(u): E\to F^*$ is an isomorphism, we can apply the implicit function theorem  \cite[Theorem 4.B, p. 150]{MR816732} and conclude that there exist $s_{0}\in (0,s_{1}]$, an open neighborhood $U\subset E$ that contains $u$ and a unique $\C^{1}$ function $[0,s_{0}]\ni s\mapsto u^{s}\in U$ such that $u^{0}=u$ and $\mathcal{A}(s,v)=0$ for $(s,v)\in[0,s_{0}]\times U$ if, and only if, $v=u^{s}$.

Since $s\mapsto u^{s}$ is $\C^{1}$, according to  \cite[Theorem 4.B, p. 150]{MR816732} we can take the following limit in the norm of $E$:
$$\lim_{s\searrow 0}\frac{u^s - u}{s} =\dot u = -  [\partial_\vp\A(0,u)]^{-1}\partial_s\A(0,u) = - A(u)^{-1}L(u) \in E,$$
where we have used 
\[
\A(s,u^{s})=0 \text{ for all } s\in [0,\tz] \implies\partial_{s}\A(0,u)+\partial_{\vp}\A(0,u) \dot u=0.
\]
This shows that $\dot u\in E$ is the unique solution of $A(u)\dot u=-L(u)\in F^*$.

Finally, in view of \eqref{eq:JG} and \eqref{G_lag}  we have, for $s\in[0,s_{0}]$,
\[
J(\Om_{s})=\G(s,u^{s},\psi)=\langle\A(s,u^{s}),\psi\rangle_{F^{*},F}+\B(s,u^{s}),\quad \forall\psi\in F.
\]
Let us assign $\psi =p$, where $p$ is the adjoint state defined by  \eqref{adj_eq}. Note that $A(u)^{*}$ is an isomorphism, as $A(u)$ is also an isomorphism, see Lemma \ref{lem-adjoint}; 
therefore the adjoint $p$ exists and is unique. 
Hence, using that $\B$ is differentiable at $(0,u)$, we get
\begin{align*}
dJ(\Om)(\theta)=\frac{d}{ds}J(\Om_{s})|_{s=0} & =\frac{d}{ds}\G(s,u^{s},p)|_{s=0}=\partial_{s}\G(0,u,p)+\langle\partial_{\vp}\G(0,u,p),\dot{u}\rangle_{E^{*},E}\\
 & =\partial_{s}\G(0,u,p)+\langle \partial_{\vp}\A(0,u)\dot{u}, p\rangle_{F^{*},F} +\langle \partial_{\vp}\B(0,u),\dot{u}\rangle_{E^{*},E}\\
 & =\partial_{s}\G(0,u,p) + \underbrace{\langle A(u)^{*}p,\dot{u}\rangle_{E^{*},E}+\langle \bl(u),\dot{u}\rangle_{E^{*},E}}_{=0 \text{ due to \eqref{adj_eq}}}\\
&  =\langle L(u),p\rangle_{F^{*},F}+\partial_{s}\B(0,u),
\end{align*}
which proves \eqref{dsG}.
\end{proof}
\begin{remark}
In the context of continuum mechanics and shape optimization, the solution $\dot u$  of \eqref{eq_material_der} is called {\it material derivative}, and  we will indeed refer to  $\dot u$ as the material derivative of $u$ in the rest of the paper. 
We observe that in applications the main quantity of interest is usually $dJ(\Om)(\ta)$ given by  \eqref{dsG}, which only requires the computation of the adjoint $p$ and not of $\dot u$. 
However,  $\dot u$  comes ``for free'' in Theorem~\ref{thm1} in the sense that the operators $L(u)$ and $A(u)$ appearing in \eqref{eq_material_der}  need to be computed anyway to get $dJ(\Om)(\ta)$ and the adjoint equation \eqref{adj_eq}.
\end{remark}
\begin{remark}
Depending on the choice of spaces $E$ and $F$, it may happen that either the adjoint equation \eqref{adj_eq} or the material derivative equation  \eqref{eq_material_der} in Theorem \ref{thm1} is formulated in a weaker form than expected, in the sense that $p$ or $\dot u$ may have lower regularity than what can be expected for a specific problem. 
Nevertheless, higher regularity for $p$ and $\dot u$ may sometimes be subsequently obtained  if $\bl(u)$ and $L(u)$ are in fact more regular than $\bl(u) \in E^*$ and $L(u) \in F^*$.
We will see instances of such situation when applying  Theorem \ref{thm1} in the next sections.  
\end{remark}

\section{Elliptic boundary value problems}\label{sec:examples}\label{sec:bilinear}
In this section we present several applications of Theorem \ref{thm1} to shape optimization problems involving elliptic equations in variational form. 
We show how these equations are naturally associated with a function $\mathcal{A}$ satisfying the assumptions of Theorem \ref{thm1}. 
In many applications, these assumptions arise naturally when considering coercive bilinear forms. 
In order to illustrate this, in Section \ref{sec:3.1}  we start by checking the hypothesis of Theorem \ref{thm1} in the case of elliptic equations of order $2\mu$ with Dirichlet conditions via G\aa rding's Theorem, and explain how to determine  $\mathcal{A}$. 
Then in Section \ref{sec:3.2} we explicitly compute the distributed shape derivative in the case of linear second order elliptic equations with Robin boundary conditions,
and  in the case of a second order quasilinear equation in Section  \ref{sec:3.3}.
In Sections \ref{sec:3.1} to \ref{sec:3.3} we present examples with $E=F$, so that the material derivative $\dot{u}$ and the adjoint state $p$ have the same regularity. 
In Sections \ref{sec:distribution2} and \ref{sec:higher-order} we consider two interesting examples where $E\neq F$, which results in different regularity properties for $\dot{u}$ and $p$.

\subsection{Elliptic equations of order $2\mu$}\label{sec:3.1}
In the general case of elliptic equations of order $2\mu$, we only check that the conditions of Theorem \ref{thm1} are satisfied, without computing explicitly the expression of the distributed shape derivative, as this would be beyond the scope of this paper.
Let $\Om\in\mathds{P}(\mathcal{D})$, $\ta\in \C^\mu_c(\hold,\Rd)$, the associated flow
$\Tt:\hold\rightarrow \hold$ and  $\Om_{s}:=T_{s}(\Om)$ be defined as in Section \ref{section1a}.
Let us consider the following equation for $u\in E(\Om_{s}) :=H_{0}^{\mu}(\Om_{s})$:
\begin{align}\label{eq:bvp}
\sum_{\left|\alpha\right|,\left|\beta\right|\le \mu}(-1)^{\left|\alpha\right|}\partial^{\alpha}\left(m_{\alpha\beta}\partial^{\beta}u\right)+\lambda u & =  f \quad  \textrm{ in }\Om_{s},
\end{align}
where $\sum_{\left|\alpha\right|,\left|\beta\right|=\mu}m_{\alpha\beta}(y)\xi^{\alpha+\beta}\ge c\left|\xi\right|^{2\mu}$,
for all $y\in\mathcal{D}$ , $\xi\in\R^{d}$ and for some
fixed constant $c>0$.  We assume that $m_{\alpha\beta}\in\C^{1}(\mathcal{D},\R)$
for all $\left|\alpha\right|$, $\left|\beta\right|\le \mu$, $\lambda \in \R$ and that $f\in H^{1}(\hold)$.

Associated with \eqref{eq:bvp}, we define a bilinear form
$c_{s}:E(\Om_{s})\times E(\Om_{s})\to\R$ by
\begin{equation}\label{cshigher}
c_{s}(\vp,\psi)=\sum_{\left|\alpha\right|,\left|\beta\right|\le \mu}\int_{\Om_{s}}m_{\alpha\beta}\partial^{\beta}\vp\partial^{\alpha}\psi+\lambda\int_{\Om_s}\vp\psi.
\end{equation} 
We assume that $\lambda$ is sufficiently large so that the bilinear form $c_{s}$ is coercive due to the G\aa rding's inequality \cite[Theorem 19.2]{wlokapartial},
that is, there is a constant $C>0$ depending only on $s$ such that
\[
c_{s}(\vp,\vp)\ge C\left\Vert \vp\right\Vert _{E(\Om_{s})}^{2},\,\forall\vp\in E(\Om_{s}).
\]

To define the variational formulation corresponding to \eqref{eq:bvp}, we introduce $\tilde{a}_{s}:E(\Om_{s})\times E(\Om_{s})\to\R$ as
\[
\tilde{a}_{s}(\vp,\psi) :=c_{s}(\vp,\psi)-\int_{\Om_{s}}f\psi,
\]
and $u_{s}$ is said to be a weak solution if $\tilde{a}_{s}(u_{s},\psi)=0,\,\forall\psi\in E(\Om_{s})$.

Let us denote $E:=E(\Om)$. Using the pullback $\Psi_{s}:E(\Om_{s})\ni v \mapsto v\circ T_{s} \in E$, we define $a_{s}:E\times E\to\R$ as
\begin{align*}
a_{s}(\vp,\psi) & :=  \tilde{a}_{s}(\Psi_{s}^{-1}(\vp),\Psi_{s}^{-1}(\psi))\\
 & =  \sum_{\left|\alpha\right|,\left|\beta\right|\le \mu}\int_{\Om_{s}}m_{\alpha\beta}\partial^{\beta}(\vp\circ T_{s}^{-1})\partial^{\alpha}(\psi\circ T_{s}^{-1}) +\lambda\int_{\Om_s}\vp\circ T_{s}^{-1}\psi\circ T_{s}^{-1}
   -\int_{\Om_{s}}f\psi\circ T_{s}^{-1}\\
 & =  \sum_{\left|\alpha\right|,\left|\beta\right|\le \mu}\int_{\Om}m_{\alpha\beta}^{s}\partial^{\beta}\vp\partial^{\alpha}\psi
 -\int_{\Om}f\circ T_{s}\psi\xi(s).
\end{align*}
The coefficients $m_{\alpha\beta}^{s}$ depend on spatial derivatives of order up to $\mu$ of $T_{s}$ and on the coefficients $m_{\alpha\beta}$ and $\lambda$. Hence they are continuous functions with continuous derivatives in $s$.

Due to the bilinearity there exists a unique continuous
linear operator $\mathcal{A}_0(s):E\to E^{*}$ such
that 
\[
\left\langle \mathcal{A}_0(s)\vp,\psi\right\rangle _{E^{*}\times E}
:=\sum_{\left|\alpha\right|,\left|\beta\right|\le \mu}\int_{\Om}m_{\alpha\beta}^{s}\partial^{\beta}\vp\partial^{\alpha}\psi.
\]
The linear functional $\mathcal{A}_1(s)\in E^{*}= H^{-\mu}\left(\Om\right)$ defined by
\[
\langle\mathcal{A}_1(s),\psi\rangle_{E^*,E} := \int_{\Om}f\circ T_{s}\psi\xi(s)
\]
is also continuous.
Gathering all these informations, we conclude that $\mathcal{A}(s,\vp):=\mathcal{A}_0(s)\vp-\mathcal{A}_1(s)$ belongs to $\C^{1}\left([0,s_{1}]\times E,E^{*}\right)$ and  is the unique function such that
\[
a_{s}(\vp,\psi)=\left\langle \mathcal{\mathcal{A}}(s,\vp),\psi\right\rangle _{E^{*} \times E}.
\]
By the linearity of $\mathcal{A}_0(s)$ with respect to $\vp$, we
readily conclude that $\partial_{\vp}\mathcal{A}:[0,s_{1}]\times E\to\mathcal{L}(E,E^{*})$
is given by $\partial_{\vp}\mathcal{\mathcal{A}}(s,\vp)=\mathcal{A}_0(s)$.
In particular, for $u=u_0$, we have that $A(u):=\partial_{\vp}\mathcal{\mathcal{A}}(0,u)=\mathcal{A}_0(0)$
is associated with the coercive form $c_{s}$ for $s=0$ as defined in \eqref{cshigher}. 
Then it follows by the Lax-Milgram theorem that $A(u):E\to E^{*}$
is an isomorphism.

We conclude that, for $E=F$, the function $\mathcal{A}$
satisfies all conditions of Theorem \ref{thm1}.
The arguments we have given can be  further  generalized for boundary
conditions that satisfy the so-called Agmon conditions \cite{wlokapartial}; the ideas
remain the same.

\subsection{Second order linear equations.}\label{sec:3.2}
Let $\Om\in\mathds{P}(\mathcal{D})$ be of class $\C^{1}$, $\ta\in \C^1_c(\hold,\Rd)$, the associated flow
$\Tt:\hold\rightarrow \hold$ and  $\Om_{s}:=T_{s}(\Om)$ be defined as in Section \ref{section1a}.
Assume that $f\in H^{1}(\hold)$, $g\in H^{2}(\hold)$ and $\beta\in \C^{1}(\hold)$ satisfies $\beta(y)>0$ for all $y\in\mathcal{D}$; these relatively strong assumptions are needed in order to apply Theorem \ref{thm1}.
Let $u_s\in H^{1}(\Om_{s})$ be the solution to
\begin{align}\label{Rob1}
\begin{split}
-\sum_{i,j=1}^{d}m_{ij}\partial_{i}\partial_{j}u_s & = f \quad \textrm{ in } \Om_{s},\\
\sum_{i,j=1}^{d}n_{i}m_{ij}\partial_{j}u_s   +   \beta u_s & = g  \quad \textrm{ on } \partial\Om_{s},
\end{split}
\end{align}
where $n$ is the outward unit normal vector
to $\Om_{s}$. 
The matrix $M=(m_{ij})\in \mathds{R}^{d\times d}$ is a positive definite symmetric matrix with constant coefficients. 
We can associate the following function $\tilde{a}_{s}:H^{1}(\Om_{s})\times H^{1}(\Om_{s})\to\R$ and bilinear form $c_{s}:H^{1}(\Om_{s})\times H^{1}(\Om_{s})\to\R$ to Problem \eqref{Rob1}:
\[
\tilde{a}_{s}\left(\vp,\psi\right) :=c_{s}(\vp,\psi)-\int_{\Om_{s}}f\psi-\int_{\partial\Om_{s}}g\psi
\quad \text{ and }\quad 
c_{s}(\vp,\psi) :=\int_{\Om_{s}}M\nabla\vp  \cdot  \nabla\psi+\int_{\partial\Om_{s}}\beta\vp\psi.
\]
A weak solution of \eqref{Rob1} is a function $u_s\in H^1(\Om_s)$ such that $\tilde{a}_s(u_s,\psi)=0$ for all $\psi\in H^1(\Om_s)$. The bilinear form $c_{s}$ is continuous and coercive \cite[Satz 7.37]{arendt2010partielle}.
Using the pullback $\Psi_{s}:H^{1}(\Om_{s})\ni v \mapsto v\circ T_{s} \in H^{1}(\Om)$, and defining  $\beta^{s}:=\beta\circ T_{s}$, $f^{s}:=f\circ T_{s}$, $g^{s}:=g\circ T_{s}$,
we introduce $a_{s}:H^{1}(\Om)\times H^{1}(\Om)\rightarrow\R$ as 
\begin{align*}
a_{s}(\vp,\psi) & :=\tilde{a}_{s}(\Psi_{s}^{-1}(\vp),\Psi_{s}^{-1}(\psi))\\
 & =\int_{\Om_{s}}M\nabla(\vp\circ T_{s}^{-1})\cdot\nabla(\psi\circ T_{s}^{-1})-f(\psi\circ T_{s}^{-1})
  +\int_{\partial\Om_{s}}(\beta\vp\circ T_{s}^{-1}-g)\psi\circ T_{s}^{-1}\\
 & =\left\langle \mathcal{A}_0(s)\vp - \mathcal{A}_1(s),\psi\right\rangle _{H^{1}(\Om)^{*}\times H^{1}(\Om)},
\end{align*}
with the linear operators $\mathcal{A}_0(s):H^{1}(\Om)\to H^{1}(\Om)^{*}$ and $\mathcal{A}_1(s)\in H^{1}(\Om)^{*}$ defined by
\begin{align*}
\left\langle \mathcal{A}_0(s)\vp,\psi\right\rangle _{H^{1}(\Om)^{*}\times H^{1}(\Om)}
& :=\int_{\Om} \mathcal{M}(s,M)  \nabla\vp\cdot\nabla\psi
+\int_{\partial\Om}\beta^{s}\vp\psi \, \xi_{\Gamma}(s),\\
\left\langle \mathcal{A}_1(s),\psi\right\rangle _{H^{1}(\Om)^{*}\times H^{1}(\Om)}
& :=\int_{\Om}f^{s}\psi\xi(s)+\int_{\partial\Om}g^{s}\psi\xi_{\Gamma}(s),
\end{align*}
see Definition \ref{def5} for the definition of $\mathcal{M}(s,M)$.
The smoothness assumptions allow us to define  $\mathcal{A}\in \C^{1}([0,s_{1}]\times H^{1}(\Om),H^{1}(\Om)^{*})$
by 
\[
\mathcal{\mathcal{A}}(s,\vp) :=\mathcal{A}_0(s)\vp-\mathcal{A}_1(s).
\]
The weak solution of  \eqref{Rob1} for $s=0$ corresponds to the unique function $u$ solving $\mathcal{A}(0,u)=0$.
The linearity of $\mathcal{A}_0(s)$ implies that $\partial_{\vp}\mathcal{A}:[0,s_{1}]\times H^{1}(\Om)\to\mathcal{L}(H^{1}(\Om),H^{1}(\Om)^{*})$
is given by $\partial_{\vp}\mathcal{\mathcal{A}}(s,\vp)=\mathcal{A}_0(s)$.
In particular, $A = A(u):=\partial_{\vp}\mathcal{\mathcal{A}}(0,u)=\mathcal{A}_0(0)$, with $u=u_0$,
is associated with the coercive form $c_{0}$ and the Lax-Milgram theorem
implies that $A:H^{1}(\Om)\to H^{1}(\Om)^{*}$ is an isomorphism. We conclude that for $E=F=H^{1}(\Om)$, the function $\mathcal{A}$
satisfies all conditions of Theorem \ref{thm1}.

Using $g\in H^2(\hold)$ and \cite[Lemma 5.3.9]{MR3791463} we obtain that $s\mapsto g^s$ is differentiable in $H^1(\hold)$;
then, $s\mapsto g^s$ is differentiable in $H^{1/2}(\partial\Om)$  using the fact that the trace operator is a bounded linear operator from $H^1(\hold)$ to $H^{1/2}(\partial\Om)$ since $\Om$ is $\C^1$.
Using then $\beta\in \C^{1}(\hold)$ and Lemma \ref{lem01}, this proves the differentiability of $s\mapsto \int_{\partial\Om}\beta^{s}\vp\psi \, \xi_{\Gamma}(s)$ and $s\mapsto\int_{\partial\Om}g^{s}\psi\xi_{\Gamma}(s)$.
The derivative $L(u) :=\partial_{s}\mathcal{A}(0,u)\in F^{*}$  is computed as follows (see Lemma \ref{lem01}):
\begin{align*}
\left\langle L(u),\psi\right\rangle_{F^{*},F} 
&:= \left\langle\partial_{s}\mathcal{A}(0,u),\psi\right\rangle _{F^{*},F} = \partial_s a_s(u,\psi)|_{s=0}  \\
& =  \iom \mathcal{M}'(0,M)\nabla u\cdot\nabla \psi 
- \iom \divv(f\ta)\psi  
+ \int_{\po} ( \beta u - g)\psi \divv_\Gamma(\ta)  + \psi(u\nabla \beta - \nabla g)\cdot\ta .
\end{align*}  
We have seen that  $A=\mathcal{A}_0(0)$, therefore
\begin{align*}
\langle A\hvp,\psi\rangle_{F^*,F} =
\langle\mathcal{A}_0(0)\hvp,\psi\rangle_{F^*,F}  & 
= \iom M \nabla\hvp\cdot\nabla \psi 
+ \int_{\po} \beta \hvp \psi .
\end{align*}

Consider the following cost functional $J:\mathds{P}(\mathcal{D})\to \R$ defined by
\begin{equation}\label{cost_Robin}
 J(\Om) := \frac{1}{2}\int_{\Om} |\nabla u|^2. 
\end{equation}
Using the pullback $\Psi_s$, we define the auxiliary function $\mathcal{B}:[0,s_1]\times E\to\mathds{R}$ associated with the cost functional \eqref{cost_Robin} as
$$ \B(s,\vp) :=  \frac{1}{2} \int_\Om D\Tt^{-\transp}\nabla \vp \cdot D\Tt^{-\transp}\nabla \vp  \xi(s).$$
We have $\mathcal{B}\in \C^1([0,s_1]\times E, \mathds{R})$ thanks to Lemma \ref{lem01}. 
Then we compute
$$
\langle \bl(u),\hvp\rangle_{E^*,E}
:=
\langle\partial_\vp \B(0,u),\hvp\rangle_{E^*,E}
= \iom \nabla u \cdot \nabla\hvp . $$
Note that $u\in E= H^1(\Om)$, therefore $\bl(u)\in E^* = H^1(\Om)^*$ is well defined.

Gathering these results, we can apply Theorem \ref{thm1} and we get 
the adjoint state $p\in E=F$ which is the unique solution of 
$$\langle A(u)^*p,\hvp\rangle_{E^*,E} = - \langle \bl(u),\hvp\rangle_{E^*,E}\quad \text{ for all }\hvp\in E.$$
Explicitly, this corresponds to 
\begin{align*}
 \iom M \nabla p \cdot\nabla \hvp
+ \int_{\po} \beta p \hvp 
& 
= -\iom \nabla u \cdot \nabla\hvp \quad \text{ for all }\hvp\in E.
\end{align*}

Finally, Theorem \ref{thm1} and Lemma \ref{lem01} yield the following expression of the shape derivative:
\begin{align*}
 dJ(\Om)(\ta) &= \langle L(u),p \rangle_{F^*,F} + \partial_s \B(0,u)\\
 &= \iom \M'(0,M)\nabla u\cdot\nabla p
- \iom \divv(f\ta)p 
+ \int_{\po} ( \beta u - g)p\divv_\Gamma(\ta)  
+ p (u\nabla \beta - \nabla g)\cdot\ta\\
&\quad +\iom \frac{1}{2} |\nabla u|^2\divv(\ta)  
- \nabla u\cdot D\ta^\transp \nabla u .
\end{align*}
Using the tensor relations of Lemma \ref{lemma_tensor} yields the following result.
\begin{proposition}
Let $\Om\in\mathds{P}(\mathcal{D})$ be of class  $\C^1$, $\ta\in \C^1_c(\hold,\Rd)$, $f\in H^{1}(\hold)$, $g\in H^{2}(\hold)$ and $\beta\in \C^{1}(\hold)$ satisfy $\beta(y)>0$ for all $y\in\mathcal{D}$.
Then the shape derivative of the cost functional \eqref{cost_Robin} subject to the constraint \eqref{Rob1} is given by
$$ dJ(\Om)(\ta) = \iom S_0\cdot \ta + S_1 : D\ta  + \int_{\po} S_{0,\Gamma}\cdot \ta + S_{1,\Gamma} : D_\Gamma\ta ,$$
with  $S_0\in L^1(\Om, \R^{d})$, $S_1\in L^1(\Om, \R^{d\times d})$,  $S_{0,\Gamma}\in L^1(\po, \R^{d})$ and $S_{1,\Gamma}\in L^1(\po, \R^{d\times d})$ given by
\begin{align*}
S_0 & = -p\nabla f,\\
S_1 & = - \nabla p\otimes M\nabla u  
-  \nabla u\otimes M \nabla p 
-  \nabla u\otimes \nabla u
+ \left[M\nabla u\cdot \nabla p - fp + \frac{1}{2} |\nabla u|^2\right] I_d,  \\
S_{0,\Gamma} & = p (u\nabla \beta - \nabla g),\\
S_{1,\Gamma} & = [(\beta u - g)p] I_d.
\end{align*}
\end{proposition}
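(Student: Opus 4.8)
The explicit scalar expression for $dJ(\Om)(\ta)$ has already been assembled from Theorem~\ref{thm1} and Lemma~\ref{lem01} just above the statement, so the only task remaining is to recast that expression into the stated tensorial form and to verify the asserted integrability of the four tensors. The plan is therefore purely algebraic: expand the two ``composite'' quantities $\M'(0,M)$ and $\divv(f\ta)$, and then rewrite every resulting term as either a contraction against $\ta$ or a double contraction against $D\ta$ (in the interior) and against $D_\Gamma\ta$ (on $\po$), reading off $S_0,S_1,S_{0,\Gamma},S_{1,\Gamma}$ by collecting coefficients. The two identities I will lean on throughout are $\Sb:(a\otimes b)=\Sb^\transp a\cdot b=a\cdot\Sb b$ from Lemma~\ref{lemma_tensor}, together with the trace observations $\divv\ta=I_d:D\ta$ and $\divv_\Gamma\ta=I_d:D_\Gamma\ta$, the latter following from $\Sb:(a\otimes b)=a\cdot\Sb b$ applied with $\Sb=I_d$ to the definitions of $D_\Gamma\ta$ and $\divv_\Gamma\ta$.

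For the interior terms I would first insert $\M'(0,M)=\divv(\ta)M-D\ta M-M D\ta^\transp$ from Lemma~\ref{lem01} into $\iom\M'(0,M)\nabla u\cdot\nabla p$. The $\divv(\ta)$ piece becomes $(M\nabla u\cdot\nabla p)\,I_d:D\ta$; the piece $-D\ta M\nabla u\cdot\nabla p$ becomes $-(\nabla p\otimes M\nabla u):D\ta$ via $a\cdot\Sb b=\Sb:(a\otimes b)$; and the piece $-M D\ta^\transp\nabla u\cdot\nabla p$ becomes $-(\nabla u\otimes M\nabla p):D\ta$ after moving the symmetric matrix $M$ onto $\nabla p$ and applying $\Sb^\transp a\cdot b=\Sb:(a\otimes b)$. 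Similarly $-\iom\divv(f\ta)p$ splits via $\divv(f\ta)=f\divv\ta+\nabla f\cdot\ta$, contributing $-fp\,I_d$ to $S_1$ and the genuine $\ta$-term $-p\nabla f=S_0$, while the cost-functional part $\tfrac12|\nabla u|^2\divv\ta-\nabla u\cdot D\ta^\transp\nabla u$ contributes $\tfrac12|\nabla u|^2 I_d$ and $-\nabla u\otimes\nabla u$. Collecting all the $:D\ta$ coefficients yields exactly the stated $S_1$, and the only surviving $\cdot\ta$ coefficient is $S_0$.

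The boundary terms are handled the same way: in $\int_{\po}(\beta u-g)p\,\divv_\Gamma(\ta)$ I would replace $\divv_\Gamma\ta$ by $I_d:D_\Gamma\ta$ to obtain $S_{1,\Gamma}=[(\beta u-g)p]I_d$, while the remaining boundary contribution $p(u\nabla\beta-\nabla g)\cdot\ta$ is already in $\cdot\ta$ form and gives $S_{0,\Gamma}=p(u\nabla\beta-\nabla g)$.

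The one step that actually requires justification rather than bookkeeping is the claimed integrability, i.e.\ $S_0\in L^1(\Om,\R^d)$, $S_1\in L^1(\Om,\R^{d\times d})$, $S_{0,\Gamma}\in L^1(\po,\R^d)$ and $S_{1,\Gamma}\in L^1(\po,\R^{d\times d})$. Since $u,p\in H^1(\Om)$ we have $\nabla u,\nabla p\in L^2(\Om)$, so by Cauchy--Schwarz every quadratic expression $\nabla p\otimes M\nabla u$, $\nabla u\otimes M\nabla p$, $\nabla u\otimes\nabla u$ and $M\nabla u\cdot\nabla p$ lies in $L^1(\Om)$; combined with $f\in H^1(\hold)\subset L^2$ and $p\in L^2(\Om)$ this gives $S_0,S_1\in L^1(\Om)$. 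For the boundary tensors I would invoke the trace theorem on the $\C^1$ domain $\Om$: the traces of $u$ and $p$ belong to $L^2(\po)$, $\beta\in\C^1(\hold)$ is bounded, and $g\in H^2(\hold)$ ensures $\nabla g$ has an $L^2(\po)$ trace, so the pointwise products defining $S_{0,\Gamma}$ and $S_{1,\Gamma}$ lie in $L^1(\po)$. This integrability check, though routine, is the main obstacle in the sense that it is precisely where the regularity hypotheses $f\in H^1$, $g\in H^2$, $\beta\in\C^1$ and the $\C^1$ regularity of $\Om$ are genuinely consumed.
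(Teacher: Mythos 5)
Your proposal is correct and follows exactly the route the paper intends: the displayed scalar expression for $dJ(\Om)(\ta)$ obtained from Theorem~\ref{thm1} and Lemma~\ref{lem01} is converted into tensorial form via the identities of Lemma~\ref{lemma_tensor} (the paper compresses this into the single sentence ``Using the tensor relations of Lemma~\ref{lemma_tensor} yields the following result''), and your term-by-term bookkeeping, including the identity $\divv_\Gamma\ta = I_d : D_\Gamma\ta$ and the use of the symmetry of $M$, reproduces the stated $S_0$, $S_1$, $S_{0,\Gamma}$, $S_{1,\Gamma}$ correctly. Your explicit Cauchy--Schwarz/trace verification of the $L^1$ regularity of the tensors is a detail the paper leaves implicit, and it correctly identifies where the hypotheses on $f$, $g$, $\beta$ and the $\C^1$ regularity of $\Om$ are used.
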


\subsection{Second order quasilinear equations}\label{sec:3.3}
In this section $d$ is either $2$ or $3$. We suppose that
the set $\Om\in\mathds{P}(\mathcal{D})$ is Lipschitz and convex,  $\ta\in \C^1_c(\hold,\Rd)$ and  the associated flow  $T_s = T_s^\ta:\hold \to\hold$ and $\Om_s :=T_s(\Om)$ are defined as in Section \ref{section1a}. 
Here we consider the following non-linear problem:
\begin{align}\label{quasilinear}
	\begin{split}
		- \divv\left(m(y,u_s)\nabla u_s\right)+f(y,u_s) & =g\quad  \textrm{ in }\Om_{s},\\
		u_s & =0\quad \textrm{ on }\po_{s}.
	\end{split}
\end{align}
Our assumptions are that $g\in H^1(\mathcal {D})$,  $m,f:\mathcal{D}\times \R\to \R$ are of class $\C^{2}$ and there exist constants $c_{1}, c_{2}, c_{3}>0$ such that 
\begin{align}\label{cond_nonlin}
c_{1}\le m(y,r),\  c_{2}\le\min\left\{\partial_r f(y,r),\partial_r m(y,r)\right\},
\ \max\left\{\partial_r f(y,r),\partial_r m(y,r)\right\}\le c_{3},\forall(y,r)\in\mathcal{D}\times \R. 
\end{align}
If $s=0$, then $\Om_0=\Om$ and the assumptions imply that \eqref{quasilinear} has a unique solution $u_0$, see \cite[Theorem 2.1]{casas2008optimality}, and  $u_0\in H_0^1(\Om)\cap H^2(\Om)$, see  \cite[Corollary 2.5]{casas2008optimality}.

The variational formulation corresponding to  \eqref{quasilinear}  may be written as
\[
\tilde{a}_{s}(u_s,\psi) 
:=
\int_{\Om_s} m(y,u_s)\nabla u_s \cdot \nabla \psi
+ \int_{\Om_s} f(y,u_s)\psi
- g\psi
=0 \quad \text{ for all }\psi\in H^1_0(\Om_s).
\]
In order to work with variables in the fixed space  $H^1_0(\Om)$ we use the pullback  $\Psi_s : H^1_0(\Om_s)\to H^1_0(\Om), v\mapsto v\circ T_s$; see Section \ref{sec:lag}.
Let $E=F=H_0^1(\Om)$ and  $E^* = F^*=H^{-1}(\Om)$. We define $\A:[0,s_{1}]\times E \rightarrow F^{*}$ as
\begin{align*}
	\langle \A(s,\vp),\psi \rangle_{F^*,F} & :=\tilde{a}_{s}(\Psi_s^{-1}(\vp),\Psi_s^{-1}(\psi))\\
	&= \int_{\Om_s} m(y,\vp\circ T_s^{-1})\nabla (\vp\circ T_s^{-1})\cdot \nabla (\psi\circ T_s^{-1})
	+\int_{\Om_s} f(y,\vp\circ T_s^{-1})\psi\circ T_s^{-1}- g\psi\circ T_s^{-1}.
\end{align*}
Performing the change of variables $x\mapsto T_s(x)$ and using the chain rule we get
\begin{align}\label{asnonlin}
	\langle \A(s,\vp),\psi \rangle_{F^*,F} 
	& =\int_{\Om}m(T_s(x),\vp)\M(s,I_d)\nabla \vp \cdot\nabla \psi 
	+ f(T_s(x),\vp)\psi\xi(s)
	- g^s\psi\xi(s),
\end{align}
where $g^s := g\circ T_s$, and $\M(s,I_d)$ is defined in Definition \ref{def5}. 
Notice that $\A$ is $\C^1$; indeed by Lemma \ref{lem01} and the fact that $f$ is $\C^2$ and $g\in H^{1}(\mathcal{D})$, it follows that $\A$ is differentiable at $(0,u):=(0,u_0)$. In particular, we have
\begin{align*}
		\langle A(u)\hat{\vp},\psi\rangle_{F^*,F}
		&:= \langle \partial_\vp\A(0,u)(\hat\vp ),\psi \rangle_{F^*,F} 
		= \int_{\Om}\partial_r m(x,u)\hat\vp \nabla u\cdot\nabla \psi
		+ m(x,u)\nabla\hat\vp \cdot \nabla\psi 
		+ \partial_r f(x,u)\psi\hat{\vp}.
\end{align*}
In view of  \cite[Remark 2.7]{casas2008optimality}, $A(u): H_0^1(\Om)\to H^{-1}(\Om)$ is an isomorphism.

Let $u_d\in H^1(\hold)$ and introduce the tracking-type cost functional $J:\mathds{P}(\mathcal{D})\to \R$ as
\begin{equation}\label{cost_quasilinear}
J(\Om) :=\dfrac{1}{2}\int_{\Om} (u-u_d)^2. 
\end{equation}
We introduce the corresponding perturbed cost functional
$$
\B(s,\vp) :=\frac{1}{2}\int_{\Om}(\vp-u_d\circ T_s)^2\xi(s)
$$
and compute
$$
\langle \bl(u),\hat{\vp}\rangle_{E^*,E} 
:=\partial_\vp \B(0,u)(\hat\vp)
= \int_\Om\hat{\vp}(u-u_d).
$$
Then $\bl(u)\in E^*=H^{-1}(\Om)$ since $u\in H_0^1(\Om)\cap H^2(\Om)$ and  $u_d\in H^1(\hold)$. 
We also have using \eqref{asnonlin} and Lemma~\ref{lem01}:
\begin{align*}
	\langle L(u),\psi\rangle_{F^*,F}
	:=\left\langle\partial_{s}\mathcal{A}(0,u),\psi\right\rangle _{F^{*},F}
	&=\int_\Om  m(x,u)\M'(0,I_d)\nabla u\cdot \nabla\psi 
	+ \nabla_x m(x,u)\cdot\ta \nabla u\cdot \nabla\psi\\
	&\quad +\int_\Om f(x,u)\psi \divv\ta
	+ \nabla_x f(x,u)\cdot\ta \psi
	-\divv(g\ta)\psi,
\end{align*}
The function $L(u)$ belongs to $F^*$ due to the regularity assumptions. Indeed, since $u\in H_0^1(\Om)\cap H^2(\Om)$ and  $d=2$ or $d=3$, by  \cite[Theorem 3.8, p. 66]{necas}\footnote{Theorem 3.8 of \cite[p. 66]{necas} says that if $\Om$ is Lipschitz, $p\geq 1$, $kp>n$ and $\mu=k-(n/p)$ if $k-n/p<1$, $\mu<1$ if $k-n/p=1$ or $\mu=1$ if $k-n/p>1$, then $W^{k,p}(\Om)\subset C^{0,\mu}(\overline{\Om})$ algebraically and topologically.}  we have that $u\in\C^{0,\mu}(\overline{\Om})$ for all $\mu<1$ if $d=2$ and $u\in\C^{0,\frac{1}{2}}(\overline{\Om})$ if $d=3$. Thus, with the fact that $m$ is $\C^2$, we get that $m(x,u)\M'(0,I_d)\nabla u$ is in $H^1(\Om)$. 
We point out that the other terms are in $L^2(\Om)$ by analogous arguments.

Also, the partial derivative of $\B$ with respect to $s$ at $(0,u)$ exists thanks to Lemma \ref{lem01} and the regularity of $u_d$, thus we are in a position to apply Theorem \ref{thm1}. 
We obtain the existence of the material derivative $\dot u\in F$ satisfying
$$
\langle A(u)\dot u,\psi\rangle_{F^*,F}=-\langle L(u),\psi\rangle_{F^*,F}.
$$
The shape derivative has the expression
\begin{align*}
	dJ(\Om)(\ta) &=\langle L(u),p\rangle_{F^*,F}
	+\partial_s \B(0,u)\\
	& = \int_\Om  m(x,u)\M'(0,I_d)\nabla u\cdot \nabla p 
	+ \nabla_x m(x,u)\cdot\ta \nabla u\cdot \nabla p\\
	&\quad +\int_\Om f(x,u)p \divv\ta
	+ p\nabla_x f(x,u)\cdot\ta 
	-\divv(g\ta)p
	+\frac{1}{2}(u - u_d)^2\divv(\ta) 
	- \nabla u_d\cdot \ta (u-u_d),
\end{align*}
where the adjoint $p\in F=H_0^{1}(\Om)$ is the unique solution of 
$$
\langle A(u)^*p,\hat\vp\rangle_{E^*,E}=-\langle \bl(u),\hat{\vp}\rangle_{E^*,E}\quad \text{ for all }\hvp\in E.
$$
The adjoint equation can be written explicitly as
$$
\int_{\Om} m(x,u)\nabla p \cdot \nabla \hat\vp
+ \partial_r m(x,u)\hat\vp \nabla u\cdot\nabla p
+ \partial_r f(x,u) p \hat\vp
= - \iom\hat{\vp}(u-u_d)\quad \text{ for all }\hvp\in E.
$$

Finally, using the tensor relations of Lemma \ref{lemma_tensor}  we obtain the following result.
\begin{proposition}\label{prop2}
Let $d=2$ or $3$, $\Om\in\mathds{P}(\mathcal{D})$  Lipschitz and convex, $\ta\in \C^1_c(\hold,\Rd)$, $m,f:\mathcal{D}\times \R\to \R$  of class $\C^{2}$ and there exist constants $c_{1}, c_{2}, c_{3}>0$ such that 
\eqref{cond_nonlin} holds.
Then, the distributed shape derivative of the cost functional \eqref{cost_quasilinear} subject to the constraint \eqref{quasilinear} is given by
$$ dJ(\Om)(\ta) = \iom S_0\cdot \ta + S_1 : D\ta,$$
with $S_0\in L^1(\Om, \R^{d})$, $S_1\in L^1(\Om, \R^{d\times d})$,  defined as 
\begin{align*}
	S_0 & = (\nabla u\cdot\nabla p)\nabla_xm(x,u) 
	+ p\nabla_x f(x,u) - p\nabla g  -(u-u_d)\nabla u_d,\\
	S_1 & =  -m(x,u)(\nabla p\otimes \nabla u  
	+  \nabla u\otimes\nabla p) 
	+ \left[m(x,u)  \nabla u\cdot \nabla p + f(x,u)p -gp +\frac{1}{2}(u -u_d)^2\right] I_d.  
\end{align*}
\end{proposition}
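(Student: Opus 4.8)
The plan is to take the expression for $dJ(\Om)(\ta)$ established just above the statement and recast it, using only the tensor identities of Lemma~\ref{lemma_tensor} together with Lemma~\ref{lem01}, into the canonical form $\iom S_0\cdot\ta + S_1:D\ta$. All the analytic work is already in place: the hypotheses of Theorem~\ref{thm1} have been verified ($A(u)$ is an isomorphism by \cite[Remark 2.7]{casas2008optimality}, $L(u)\in F^*$ and $\bl(u)\in E^*$ with the regularity recorded above, and $\B$ is differentiable at $(0,u)$), so the shape derivative exists and equals $\langle L(u),p\rangle_{F^*,F}+\partial_s\B(0,u)$. Hence the proof reduces to bookkeeping: each summand must be sorted according to whether it pairs with $\ta$ (contributing to $S_0$) or with $D\ta$ (contributing to $S_1$).

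First I would treat the three terms not yet in the desired shape. The key substitution is $\M'(0,I_d)=\divv(\ta)I_d-D\ta-D\ta^\transp$ from Lemma~\ref{lem01}, which splits $m(x,u)\M'(0,I_d)\nabla u\cdot\nabla p$ into $m(x,u)(\nabla u\cdot\nabla p)\divv(\ta)$, $-m(x,u)(D\ta\,\nabla u)\cdot\nabla p$ and $-m(x,u)(D\ta^\transp\nabla u)\cdot\nabla p$. Writing $\divv(\ta)=\tr(D\ta)=I_d:D\ta$ turns the first into the $S_1$-contribution $m(x,u)(\nabla u\cdot\nabla p)I_d$. Applying the identity $\Sb:(a\otimes b)=a\cdot\Sb b$ of Lemma~\ref{lemma_tensor} with $\Sb=D\ta$ gives $(D\ta\,\nabla u)\cdot\nabla p=D\ta:(\nabla p\otimes\nabla u)$ and $(D\ta^\transp\nabla u)\cdot\nabla p=\nabla u\cdot D\ta\,\nabla p=D\ta:(\nabla u\otimes\nabla p)$, producing the symmetric pair $-m(x,u)(\nabla p\otimes\nabla u+\nabla u\otimes\nabla p)$ in $S_1$. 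Finally I would expand $\divv(g\ta)=\nabla g\cdot\ta+g\,\divv(\ta)$, so that $-\divv(g\ta)p$ splits into the $S_0$-term $-p\nabla g$ and the $S_1$-term $-gp\,I_d$.

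The remaining summands are already in standard form and require no manipulation: $\nabla_x m(x,u)\cdot\ta\,(\nabla u\cdot\nabla p)$, $p\,\nabla_x f(x,u)\cdot\ta$ and $-(u-u_d)\nabla u_d\cdot\ta$ feed $S_0$ directly, while $f(x,u)p\,\divv(\ta)=f(x,u)p\,I_d:D\ta$ and $\tfrac12(u-u_d)^2\divv(\ta)=\tfrac12(u-u_d)^2 I_d:D\ta$ feed the scalar-times-$I_d$ part of $S_1$. Collecting all contributions yields precisely the stated $S_0$ and $S_1$.

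It then remains to certify $S_0\in L^1(\Om,\R^d)$ and $S_1\in L^1(\Om,\R^{d\times d})$. Here I would invoke the regularity already recorded above: $u\in H_0^1(\Om)\cap H^2(\Om)$, so by the embedding $H^2(\Om)\hookrightarrow\C^{0,\mu}(\overline\Om)$ of \cite[Theorem 3.8]{necas} one has $u\in L^\infty(\Om)$; together with $m,f\in\C^2$ this makes $m(x,u)$, $f(x,u)$, $\nabla_x m(x,u)$, $\nabla_x f(x,u)$ bounded. Since $\nabla u,\nabla p\in L^2(\Om)$, the product $\nabla u\cdot\nabla p$ and the entries of $\nabla p\otimes\nabla u$, $\nabla u\otimes\nabla p$ lie in $L^1(\Om)$; and with $g\in H^1(\hold)$, $u_d\in H^1(\hold)$, $p\in H_0^1(\Om)$ the remaining products $p\nabla g$, $p\nabla_x f$, $(u-u_d)\nabla u_d$, $gp$ and $(u-u_d)^2$ are all at least $L^1$. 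The main obstacle is the transpose bookkeeping in the $\M'(0,I_d)$ term: one must carefully distinguish $D\ta$ from $D\ta^\transp$ to obtain the symmetric combination $\nabla p\otimes\nabla u+\nabla u\otimes\nabla p$ rather than a single outer product, and the integrability of the $S_1$ entries built from $\nabla u\otimes\nabla p$ relies precisely on the $L^\infty$-bound for $u$, which is available only because $d\in\{2,3\}$ forces $H^2\hookrightarrow\C^{0,\mu}$.
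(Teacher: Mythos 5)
Your proposal is correct and follows essentially the same route as the paper: the paper's proof of this proposition is exactly the tensor-calculus bookkeeping you describe, applying $\M'(0,I_d)=\divv(\ta)I_d-D\ta-D\ta^\transp$ from Lemma~\ref{lem01}, the identity $\Sb:(a\otimes b)=a\cdot\Sb b$ from Lemma~\ref{lemma_tensor}, and the expansion of $\divv(g\ta)$ to recast the previously derived formula $dJ(\Om)(\ta)=\langle L(u),p\rangle_{F^*,F}+\partial_s\B(0,u)$ into the form $\iom S_0\cdot\ta+S_1:D\ta$. Your integrability check via $H^2(\Om)\hookrightarrow\C^{0,\mu}(\overline\Om)$ for $d\in\{2,3\}$ likewise matches the regularity argument the paper records just before the statement.
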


\subsection{Shape derivatives for elliptic problems involving distributions}\label{sec:distribution2}
In Sections \ref{sec:3.1} to \ref{sec:3.3}, we have encountered situations where $E=F$. 
In fact, in Sections \ref{sec:3.2} and \ref{sec:3.3} the spaces were respectively $H^{1}(\Om)$ and $H_{0}^{1}(\Om)$, which are natural spaces when considering second order elliptic equations.
However, when the right-hand side of the PDE is less regular than the regularity considered in Sections \ref{sec:3.2} and \ref{sec:3.3}, the condition $\A\in \C^{1}([0,s_1]\times E,F^{*})$ of Theorem \ref{thm1} fails  if one tries to work with the spaces $H^{1}(\Om)$ or $H_{0}^{1}(\Om)$.
A remedy is to work with the larger space $E=L^{2}(\Om)$, 
the counterpart being that $F$ must then be a subspace of  $H^{1}(\Om)$ or $H^1_0(\Om)$. 
An interesting  consequence is that the material derivative is then only in $L^2(\Om)$, while the adjoint is more regular than $H^1(\Om)$.
To be more precise, in this subsection we study  Poisson's equation with Dirichlet condition, with $E=E^*=L^{2}(\Om)$, and $F=H^{2}(\Om)\cap H_{0}^{1}(\Om)$;
note that in Section \ref{sec:higher-order} we study the converse case where $F$ is equal to  $L^2(\Om)$ and $E$ is a subspace of $H^1_0(\Om)$.

Let $\Om\in\mathds{P}(\mathcal{D})$ be of class  $\C^{2}$, $\ta\in \C^2_c(\hold,\Rd)$, the associated flow
$\Tt:\hold\rightarrow \hold$ and  $\Om_{s}:=T_{s}(\Om)$ be defined as in Section \ref{section1a}.
Let $h$ be a given function in $H^1_0(\hold)$ such that $h\in H^2(\omega)$ where $\omega$ is an open set satisfying $\partial\Om\subset \omega$. This implies that $\partial\Omega_{s}\subset\omega$ for all $s\in [0,s_1]$ as long as $s_{1}$ is sufficiently small.
We consider the problem of finding the solution $u_s\in H^1_0(\Om_s)$ of
\begin{equation}\label{PDE_distribution}
\iomt\nabla u_s \cdot\nabla v =  \iomt \nabla h\cdot \nabla v + hv, \text{ for all }v\in H^1_0(\Om_s).
\end{equation} 
Note that the right-hand side of \eqref{PDE_distribution} defines a distribution in $H^{-1}(\Om_s)$.
The fact that  $\nabla h$ is only in $L^2(\hold)$ precludes the direct application of Theorem \ref{thm1}.
Noticing that the restriction of $h\in H^{1}_0(\hold)$ to $\Om_s$ belongs to $H^{1}(\Om_s)$, Green's formula yields
\begin{equation}\label{eq:160}
\iomt - u_s \Delta v = \iomt  -h \Delta v + hv + \int_{\partial\Om_s} h\dn v, \text{ for all }v\in H^2(\Om_s)\cap H^1_0(\Om_s).
\end{equation} 
Proceeding in a similar way as in Section \ref{sec:3.1}, we introduce, using the pullback $\Psi_s(\psi) = \psi\circ T_s$ in \eqref{eq:160}, the  function $\A:[0,s_1]\times E \rightarrow F^{*}$ defined as 
\begin{align*}
\langle  \A(s,\vp),\psi\rangle_{F^*,F} 
&:= \iomt - (\vp\circ\Tt^{-1}) \Delta (\psi\circ\Tt^{-1}) - \iomt  -h \Delta (\psi\circ\Tt^{-1}) + h(\psi\circ\Tt^{-1}) \\
&\quad - \int_{\partial\Om_s} h\nabla (\psi\circ\Tt^{-1})\cdot n_s  ,
\end{align*}
where $n_s$ denotes the outward unit normal vector to $\Om_s$.
Using the change of variables $x\mapsto \Tt(x)$  we get
\begin{align*}
\langle  \A(s,\vp),\psi\rangle_{F^*,F} & := \iom - \vp \Delta (\psi\circ\Tt^{-1})\circ\Tt \xi(s) - \iom  [-h^s \Delta (\psi\circ\Tt^{-1})\circ\Tt + h^s \psi ]\xi(s)\\
&\quad -  \int_{\partial\Om} h^s [DT_{s}^{-\transp}\nabla \psi] \cdot n_s\circ\Tt   \xi_\Gamma(s).
\end{align*}
with $h^s: = h\circ\Tt$.
As  $\ta\in \C^2_c(\hold,\Rd)$ and $\psi\in H^2(\Om)$, Lemma \ref{lem01} and Lemma \ref{lemma_second_order_chain} imply that $\A\in \C^{1}([0,s_{1}]\times E,F^{*})$.

Next, we define the operator  $A: E\to F^*$ as
\begin{align*}
\langle A\hat{\vp},\psi\rangle_{F^*,F}
& := \langle \partial_\vp\A(0,u)(\hat\vp ),\psi \rangle_{F^*,F} =   \iom - \hvp \Delta \psi .
\end{align*}
It is known that the adjoint operator $A^*: F = H^2(\Om)\cap H^1_0(\Om)\to E^* = L^2(\Om)$ is an isomorphism, see for instance \cite{MR812624}, consequently the operator $A: E = L^2(\Om)\to F^*$ is also  an isomorphism.
Note that this implies the existence of a unique $u_{s}\in L^{2}(\Omega)$ satisfying Equation \eqref{eq:160} for all $v\in H^{2}(\Omega_{s})\cap H_{0}^{1}(\Omega_{s})$. It is clear that the unique solution of \eqref{PDE_distribution} satisfies \eqref{eq:160}, therefore we conclude that $u_{s}\in H_{0}^{1}(\Omega_{s})$.

We have that $\partial_s (n_s\circ\Tt)|_{s=0} = -D_\Gamma \ta^\transp n$; see \cite[Lemma 5.5, p. 99]{Walker_book2015}.
Then, using \eqref{der_lap}, the fact that $\nabla \psi = (\dn\psi) n$ as $\psi$ is equal to zero along $\partial\Om$ - recall that $\psi\in F=H^2(\Om)\cap H^1_0(\Om)$ - and $\xi'_\Gamma(0) = \divv_\Gamma(\ta)$ (see Lemma \ref{lem01}), we compute
\begin{align*}
\langle L(u),\psi\rangle_{F^*,F} & := \langle  \partial_s \A(0,u),\psi\rangle_{F^*,F} \\
& = \iom (h- u) (-2 D^2\psi : D\theta - (\Delta \theta) \cdot \nabla\psi)
- u \Delta \psi \divv \theta
  - \iom  (-\Delta\psi + \psi)\divv(h\theta)\\
&\quad  -\int_{\po} \dn\psi \nabla h\cdot\ta + h \dn\psi (\divv_\Gamma(\ta) - n\cdot D\ta n - \underbrace{n\cdot D_\Gamma\ta n}_{=0}).
\end{align*}
Considering the regularity of $u,\psi,\ta,$ we have indeed $L(u)\in F^*$.

Now we consider the  cost functional $J:\mathds{P}(\mathcal{D})\to\R$ given by
\begin{equation}\label{cost_distribution}
 J(\Om) = \int_{\Om} \F(x,u), 
\end{equation}
where $\F\in\C^{1}(\mathcal{D}\times\R)$. We assume that there are positive constants $c_0$ and $c_1$ such that: 
\begin{align}
\label{F1.1}\max\left\{ \sup_{x\in\mathcal{D}}\left|\mathcal{F}(x,r)\right|,\sup_{x\in\mathcal{D}}\left|\nabla_{x}\mathcal{F}(x,r)\right|\right\}  & \le  c_0 + c_1 r^{2},\\
\label{F1.2} \sup_{x\in\mathcal{D}}\left|\partial_r \mathcal{F}(x,r)\right| & \le  c_0 + c_1 r.
\end{align}
\begin{example}
Let $u_{d}\in \C^{1}(\overline{\mathcal{D}})$. It is easy to see that $\mathcal{F}(x,r)=r-u_{d}(x)$ and $\mathcal{F}(x,r)=\left(r-u_{d}(x)\right)^{2}$ satisfy conditions \eqref{F1.1}-\eqref{F1.2}. If  $\alpha\in\left[1,2\right]$, we can also define  $\mathcal{F}(x,r) :=  |r-u_{d}(x)|^{1+\alpha}$. In this case, its derivatives 
\begin{align*}
\partial_r \mathcal{F}(x,r) & =  \left(1+\alpha\right)\text{sgn}(r-u_{d}(x))|r-u_{d}(x)|^{\alpha},\quad
\nabla_{x}\mathcal{F}(x,t) =  \left(1+\alpha\right)\text{sgn}(r-u_{d}(x))\left|r-u_{d}(x)\right|^{\alpha}\nabla u_{d}(x),
\end{align*}
satisfy \eqref{F1.1}-\eqref{F1.2}. Above, $sgn$ denotes the sign function.
\end{example}

We introduce the function 
$$ \B(s,\vp) := \iomt \F(x,\vp\circ\Tt^{-1}) 
= \iom \F(\Tt(x),\vp)\xi(s). $$
The proof of the following result is given in the Appendix.
\begin{proposition}\label{thm2}
The function $\B$ belongs to $\C^1([0,s_1]\times E , \R)$.
\end{proposition}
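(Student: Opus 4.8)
The goal is to show that the map
\[
\B(s,\vp) = \iom \F(\Tt(x),\vp)\,\xi(s)
\]
belongs to $\C^{1}([0,s_1]\times E, \R)$ with $E = L^{2}(\Om)$. The plan is to verify that $\B$ is well defined, continuous, and that the partial derivatives $\partial_s\B$ and $\partial_\vp\B$ exist and are jointly continuous; by a standard result on continuously differentiable maps between Banach spaces, continuity of the partials implies $\B\in\C^1$. The essential analytic content lies entirely in the growth conditions \eqref{F1.1}--\eqref{F1.2}, which are tailored precisely so that a function in $L^2(\Om)$ composed with $\F$ and its derivatives remains integrable.

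First I would establish well-definedness and finiteness. Since $\ta\in\C^2_c(\hold,\Rd)$, Lemma \ref{lem01} guarantees $s\mapsto \Tt\in\C^1$ with values in $\C^2(\hold,\R^d)$ and $s\mapsto\xi(s)\in\C^1$ with values in $\C^1(\hold)$, so both $x\mapsto\Tt(x)$ and $\xi(s)$ are bounded uniformly in $(x,s)$ on $\overline\Om\times[0,s_1]$. Using \eqref{F1.1}, $|\F(\Tt(x),\vp(x))|\le c_0 + c_1\vp(x)^2$, which is integrable because $\vp\in L^2(\Om)$ and $\Om$ is bounded; hence $\B(s,\vp)$ is finite. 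Next I would compute the candidate partial derivatives formally:
\[
\partial_\vp\B(s,\vp)(\hvp) = \iom \partial_r\F(\Tt(x),\vp)\,\hvp\,\xi(s),
\qquad
\partial_s\B(s,\vp) = \iom \big(\nabla_x\F(\Tt(x),\vp)\cdot\partial_s\Tt + \F(\Tt(x),\vp)\,\xi'(s)\big).
\]
The growth bound \eqref{F1.2} gives $|\partial_r\F(\Tt(x),\vp)|\le c_0 + c_1|\vp|$, which lies in $L^2(\Om)$, so by Cauchy--Schwarz the linear functional $\partial_\vp\B(s,\vp)$ is bounded on $E = L^2(\Om)$, i.e.\ it defines an element of $E^*$. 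Similarly \eqref{F1.1} controls $\nabla_x\F$ and $\F$ so that $\partial_s\B(s,\vp)$ is finite.

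The main obstacle is rigorously justifying that these formal expressions are genuine Fréchet partial derivatives and, more delicately, that they are \emph{jointly continuous} in $(s,\vp)$, since continuity of $\vp\mapsto\partial_\vp\B(s,\vp)$ in the $L^2\to E^*$ sense is where the Nemytskii (superposition) operator structure matters. I would handle the existence of the derivatives via the mean value theorem applied pointwise: for the $\vp$-derivative, write the difference quotient, use $\F(\cdot,\vp+t\hvp)-\F(\cdot,\vp) = t\int_0^1\partial_r\F(\cdot,\vp+\sigma t\hvp)\,d\sigma\,\hvp$, and pass to the limit under the integral using dominated convergence with a dominating function built from \eqref{F1.2} and the $L^2$ bounds. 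For the continuity of the Nemytskii operator $\vp\mapsto\partial_r\F(\Tt(\cdot),\vp)$ from $L^2$ into $L^2$, I would invoke the classical continuity theorem for superposition operators (continuity of $\F$ in its arguments together with the growth bound \eqref{F1.2} of order one is exactly the condition guaranteeing $L^2\to L^2$ continuity). The dependence on $s$ is comparatively tame: it enters only through the $\C^1$ maps $s\mapsto\Tt$ and $s\mapsto\xi(s)$ from Lemma \ref{lem01}, which are uniformly continuous on compacta, so combining these with the Nemytskii continuity yields joint continuity of both partial derivatives. Assembling these pieces establishes $\B\in\C^1([0,s_1]\times E,\R)$.
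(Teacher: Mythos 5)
Your proposal is correct and follows essentially the same route as the paper: reduce $\C^1$-smoothness to existence and continuity of the two partial derivatives, then obtain both from the growth conditions \eqref{F1.1}--\eqref{F1.2} via pointwise difference-quotient/mean-value arguments, dominated convergence, and continuity of superposition operators. The only cosmetic difference is that the paper delegates the $\vp$-derivative to a cited theorem on $\C^1$ Nemytskii operators and proves joint continuity by an a.e.-convergent dominated-subsequence argument, whereas you re-derive the same facts directly (Gâteaux derivative by dominated convergence plus the Krasnoselskii continuity theorem), which is a rigorous but equivalent packaging of the same ingredients.
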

Using Proposition \ref{thm2} and Proposition \ref{prop:dphi_b} (see the Appendix)  we get
$$ \langle \bl(u),\hvp\rangle_{E^*,E} := \partial_\vp \B(0,u)(\hvp) 
= \iom \partial_r \F(x,u)\hvp. $$
Applying Theorem \ref{thm1} we obtain that the material derivative $\dot u$  is the unique solution of 
$$\langle A\dot u,\psi\rangle_{F^*,F} = -\langle L(u),\psi\rangle_{F^*,F}\quad \text{ for all }\psi\in F ,$$
i.e. $\dot u\in E = L^2(\Om)$ is solution of
\begin{align*}
\iom - \dot u\Delta \psi & = 
-\iom (h- u) (-2 D^2\psi : D\theta - (\Delta \theta) \cdot \nabla\psi)
- u \Delta \psi \divv \theta
+ \iom
(-\Delta\psi + \psi)\divv(h\theta)\\
&\quad + \int_{\po} \dn\psi \nabla h\cdot\ta + h \dn\psi (\divv_\Gamma(\ta) - n\cdot D\ta n )\quad
\text{ for all }\psi\in F.
\end{align*}
The adjoint state $p\in F$ is the unique solution of 
$$\langle A^*p,\hvp\rangle_{E^*,E} = - \langle \bl(u),\hvp\rangle_{E^*,E}\quad\text{ for all }\hvp\in E.$$
This means, using $\langle A^*p,\hvp\rangle_{E^*,E} = \langle p,A\hvp\rangle_{F^*,F}$, that $p\in F$ is solution of 
\begin{align*}
\iom - \hvp\Delta p  & =   -\iom \partial_r \F(x,u)\hvp.
\end{align*}
We observe that $p\in F = H^2(\Om)\cap H^1_0(\Om)$ whereas $\dot u$ only has the regularity $L^2(\Om)$.

Finally, Theorem \ref{thm1} and Proposition \ref{eq:prop4} yield the shape derivative
\begin{align}\label{dJ_distrib}
\begin{split}
dJ(\Om)(\ta) &=\langle L(u),p \rangle_{F^*,F} + \partial_s \B(0,u)\\
& =  \iom (h- u) (-2 D^2 p : D\theta - (\Delta \theta) \cdot \nabla p)
- u \Delta  p \divv \theta
- (-\Delta p +  p)\divv(h\theta)\\
&\quad  - \int_{\po} \dn p \nabla h\cdot\ta + h \dn p (\divv_\Gamma(\ta) - n\cdot D\ta n)
   + \iom \nabla_x \F(x,u)\cdot\theta + \F(x,u)\divv(\theta).
\end{split}
\end{align} 
Thus we have obtained the following result.
\begin{proposition}\label{prop5}
Let $\Om\in\mathds{P}(\mathcal{D})$ be of class  $\C^{2}$, $\ta\in \C^2_c(\hold,\Rd)$, and
suppose that $h\in H^1_0(\hold)$ is such that $h\in H^2(\omega)$, where $\omega$ is an open set satisfying $\partial\Om\subset \omega$.
Then, the distributed shape derivative of the cost functional \eqref{cost_distribution} subject to the constraint \eqref{PDE_distribution} is given by
\begin{align}\label{2nd_order_tensor}
dJ(\Om)(\ta) &= \iom S_0\cdot \theta + S_1 : D\theta + S_2 \tp D^2\theta + \int_{\po} S_{0,\Gamma}\cdot\ta + S_{1,\Gamma}: D\ta,
\end{align}
where $S_0\in L^1(\Om, \R^{d})$, $S_1\in L^1(\Om, \R^{d\times d})$, $S_2\in L^1(\Om, \R^{d\times d\times d})$, and $S_{0,\Gamma}\in L^1(\po, \R^{d}),S_{1,\Gamma}\in L^1(\po, \R^{d\times d})$   are given by 
\begin{align*}
S_0 & =  \nabla_x \F(x,u) + (\Delta p -  p)\nabla h, \\
S_1 & =  2 (u-h)  D^2 p  + [ h(\Delta p -  p) - u\Delta p + \F(x,u)]  I_d,\\
S_2 & = (u-h)\nabla p \otimes I_d,\\
S_{0,\Gamma} &= -\dn p \nabla h,\\
S_{1,\Gamma} &=  - h\dn p (I_d - 2 n\otimes n). 
\end{align*}
\end{proposition}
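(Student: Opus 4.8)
The explicit expression \eqref{dJ_distrib} for $dJ(\Om)(\ta)$ has already been obtained, so the proof is purely a matter of reorganizing its terms according to the order of derivative of $\ta$ that each one carries and matching them against the representation \eqref{2nd_order_tensor}, using the tensor calculus rules of Lemma~\ref{lemma_tensor}. The plan is to sort the volume integrand into three groups: the terms that pair directly with $\ta$ (these form $S_0$), the terms that pair with $D\ta$ (these form $S_1$), and the single term carrying second derivatives $D^2\ta$ (this forms $S_2$); and to sort the boundary integrand into the part pairing with $\ta$ (forming $S_{0,\Gamma}$) and the part pairing with $D\ta$ (forming $S_{1,\Gamma}$).

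For the volume terms I would first write $\divv\ta = I_d : D\ta$ and expand $\divv(h\ta) = \nabla h\cdot\ta + h\,\divv\ta$. The factor $\nabla h\cdot\ta$ produces the contribution $(\Delta p - p)\nabla h$ to $S_0$, which combines with $\nabla_x\F(x,u)$ coming from $\partial_s\B(0,u)$ to give $S_0 = \nabla_x\F(x,u) + (\Delta p - p)\nabla h$. Collecting every coefficient of $I_d : D\ta$, namely $-u\Delta p$, $h(\Delta p - p)$ and $\F(x,u)$, together with the Hessian term $2(u-h)D^2p : D\ta$, yields $S_1 = 2(u-h)D^2p + [\,h(\Delta p - p) - u\Delta p + \F(x,u)\,]\,I_d$.

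The delicate step, and the one genuinely new computation, is the single second-order term $(u-h)(\Delta\ta)\cdot\nabla p$, which is where the third-order tensor $S_2$ originates. Writing the vector Laplacian componentwise as $(\Delta\ta)_i = \sum_k [D^2\ta]_{ikk}$ gives $(\Delta\ta)\cdot\nabla p = \sum_{i,k}\partial_i p\,[D^2\ta]_{ikk}$, and I would recognize this as the triple contraction $S_2 \tp D^2\ta$ by reading off $[S_2]_{ijk} = (u-h)\,\partial_i p\,\delta_{jk}$, that is $S_2 = (u-h)\,\nabla p\otimes I_d$. For the boundary integrals the plan is to rewrite $\divv_\Gamma(\ta) - n\cdot D\ta\,n$ in terms of $D\ta$ alone: using the definition $\divv_\Gamma\ta = \divv\ta - D\ta\,n\cdot n$ together with the identity $D\ta\,n\cdot n = (n\otimes n):D\ta$ (a case of $\Sb:(a\otimes b) = a\cdot\Sb b$ in Lemma~\ref{lemma_tensor}), one gets $\divv_\Gamma(\ta) - n\cdot D\ta\,n = (I_d - 2\,n\otimes n):D\ta$. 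The term $-\dn p\,\nabla h\cdot\ta$ then gives $S_{0,\Gamma} = -\dn p\,\nabla h$ and the remaining boundary term gives $S_{1,\Gamma} = -h\,\dn p\,(I_d - 2\,n\otimes n)$.

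The main obstacle is bookkeeping rather than anything conceptual: one must keep careful track of signs when distributing the factors $(h-u)$ and when passing through the cancellation $n\cdot D_\Gamma\ta\,n = 0$ that has already been used to simplify \eqref{dJ_distrib}. Finally, the integrability claims $S_0,S_1\in L^1(\Om)$, $S_2\in L^1(\Om,\R^{d\times d\times d})$, and $S_{0,\Gamma},S_{1,\Gamma}\in L^1(\po)$ follow from $u\in L^2(\Om)$, $p\in H^2(\Om)\cap H^1_0(\Om)$, $h\in H^2(\om)$ near $\po$, and the growth bounds \eqref{F1.1}--\eqref{F1.2} on $\F$, with the trace theorem controlling the two boundary tensors.
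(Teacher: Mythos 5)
Your proposal is correct and follows essentially the same route as the paper's proof: regroup the terms of \eqref{dJ_distrib} by the order of derivative of $\ta$, identify $S_2$ from the single term $(u-h)(\Delta\ta)\cdot\nabla p$ (the paper does this via \eqref{eq:55} and the identity $\mathbb{S}^{\transp}a:\Tb = \mathbb{S}\tp(a\otimes\Tb)$ of Lemma~\ref{lemma_tensor}, while you unfold the same computation in indices), and rewrite the boundary factor as $\divv_\Gamma(\ta) - n\cdot D\ta\, n = (I_d - 2\, n\otimes n):D\ta$ exactly as in the paper. Your regularity argument likewise matches the paper's, which deduces the $L^1$ memberships from $p\in H^2(\Om)\cap H^1_0(\Om)$, $u\in H^1_0(\Om)$, $h\in H^2(\om)$, the growth conditions on $\F$, and traces on $\po$.
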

\begin{proof}
In view of \eqref{eq:55} and Lemma \ref{lemma_tensor} we have 
$$(u-h)(\Delta \ta)\cdot \nabla p
= (u-h) \tr(D^2\theta^{\transp}\nabla p )  
= (u-h) D^2\theta^{\transp}\nabla p : I_d
=  D^2\ta  \tp ((u-h)\nabla p\otimes I_d).$$
In this way we have identified the third-order tensor $S_2$.
The tensors $S_0,S_1$ are easily calculated using Lemma~\ref{lemma_tensor} and \eqref{dJ_distrib}.
The regularity of $S_0,S_1,S_2$ and $S_{0,\Gamma},S_{1,\Gamma}$ is an immediate consequence of the regularity of $\F$, $p\in H^2(\Om)\cap H^1_0(\Om)$,  $u\in H^1_0(\Om)$, $h \in H^2(\om)$, and $\ta\in \C^2_c(\hold,\Rd)$.

Regarding $S_{1,\Gamma}$, we observe that
$$ h \dn p (\divv_\Gamma(\ta) - n\cdot D\ta n) = h \dn p (\divv(\ta) - 2n\cdot D\ta n) = h \dn p D\ta : (I_d - 2 n\otimes n),$$
which yields the result.
\end{proof}

\subsection{Cost functional involving a second-order derivative}\label{sec:higher-order}
Let $\Om\in\mathds{P}(\mathcal{D})$ be a domain of class  $\C^{2}$, $\ta\in \C^2_c(\hold,\Rd)$,  the associated flow
$\Tt:\hold\rightarrow \hold$ and  $\Om_{s}:=T_{s}(\Om)$ be defined as in Section \ref{section1a}.
The example of this subsection also features $E\neq F$ and can be seen as the converse of the example of Section \ref{sec:distribution2}, in the sense that the adjoint is only in $L^2$ while the material derivative has $H^2$-regularity.
Using the fact that $\Om$ is of class  $\C^{2}$, a solution $u\in H^{2}(\Om)\cap H_{0}^{1}(\Om)$ of Poisson's equation for $f\in H^{1}(\mathcal{D})$ satisfies, using Green's formula and the density of $H_{0}^{1}(\Om)$ in $L^{2}(\Om)$, the following:
\begin{equation}\label{eq:555}
\int_\Om -v \Delta u=\int_\Om fv,\quad \text{for all}\,v\in L^{2}(\Om).
\end{equation}
This motivates the definition of $F = F^*= L^2(\Om)$ and $E = H^2(\Om)\cap H^1_0(\Om)$. 
Proceeding  as in Section \ref{sec:3.1}, we consider the same problem on $\Om_{s}$ and using a change of variables $x\mapsto T_s(x)$ we are led to define  $\A:[0,s_{1}]\times E \rightarrow F^{*}$ as
$$\langle  \A(s,\vp),\psi\rangle_{F^*,F}
=  \int_{\Om_s} -\psi\circ \Tt^{-1} \Delta(\vp\circ \Tt^{-1}) -f\psi\circ \Tt^{-1} 
= \iom -\psi \Delta(\vp\circ \Tt^{-1})\circ\Tt \xi(s) - f\circ \Tt \psi \xi(s).$$
Using Lemma \ref{lem01} and Lemma \ref{lemma_second_order_chain} (see the Appendix) we  immediately obtain $\A\in \C^{1}([0,s_{1}]\times E,F^{*})$.
Let $u^s\in E$ be the solution of 
\begin{equation}\label{115}
\A(s,u^s) = 0.
\end{equation}
The operator $A := \partial_\vp \A(0,u)$, $u=u^0$, is defined by
\begin{align*}
A: E & \to F^*,\\
\vp & \mapsto \left(F\ni \psi\mapsto \iom - \psi\Delta \vp\right)  .
\end{align*}
Note that $A: E\to F^*$ is an isomorphism since  $\Om$ is of class $\C^2$. 

We also compute, using \eqref{der_lap} of Lemma \ref{lemma_second_order_chain},
\begin{align}\label{eq:340}
\langle L(u),\psi\rangle_{F^*,F} := \langle  \partial_s \A(0,u),\psi\rangle_{F^*,F}
= \iom \psi ( 2D^2u : D\ta + (\Delta \ta)\cdot \nabla u) -\psi \Delta u \divv\ta
-\psi\divv(f\ta). 
\end{align}

Consider now the cost functional $J:\mathds{P}(\mathcal{D})\to \R$ given by
\begin{equation}\label{cost_D2u}
 J(\Om) := \frac{1}{2}\int_{\Om} |D^2 u|^2,
\end{equation}
and the corresponding perturbed functional
$$ \B(s,\vp) := \frac{1}{2}\int_{\Om_s} | D^2 (\vp\circ \Tt^{-1})|^2
= \frac{1}{2}\int_\Om | D^2 (\vp\circ \Tt^{-1})|^2\circ\Tt \xi(s).$$
Note that $\B(s,u^s)$ is well-defined for all $s\in [0,s_{1}]$ since $u^s\in E$.
Using $\vp\in E$ and \eqref{der_D2} we obtain
\begin{align}\label{eq:341}
 \partial_s \B(0,u) 
&= \int_\Om 
[- D\theta^{\transp} D^2 u-   D^2 u D\theta 
- D^2\theta^{\transp}\nabla u] : D^2 u 
+ \frac{1}{2}|D^2 u|^2 \divv(\ta).
\end{align}
We also compute
$$\langle \bl(u),\hvp\rangle_{E^*,E} := \langle\partial_\vp \B(0,u),\hvp\rangle_{E^*,E} = \int_\Om D^2 u : D^2\hvp.$$
The adjoint of $A$ is given by
\begin{align*}
A^*: F & \to E^*,\\
p & \mapsto \left(E\ni \vp\mapsto \iom - p\Delta \vp\right)  ,
\end{align*}
and the adjoint equation is
$$\langle A^*p,\hvp\rangle_{E^*,E} = - \langle \bl(u),\hvp\rangle_{E^*,E}\qquad\text{ for all }\hvp\in E,$$
which may be written as
$$ \iom - p\Delta \hvp = - \int_\Om D^2 u : D^2\hvp\qquad \text{ for all }\hvp\in E,$$
and this yields the regularity $p\in F = L^2(\Om)$.

The equation for the material derivative is
$$\langle A \dot{u},\psi\rangle_{F^*,F} = - \langle L(u),\psi\rangle_{F^*,F}\qquad \text{ for all }\psi\in F$$
which corresponds to
$$ \iom - \psi\Delta \dot{u} = - \iom \psi ( 2D^2u : D\ta + (\Delta \ta)\cdot \nabla u) -\psi \Delta u \divv\ta-\psi\divv(f\ta)\qquad \text{ for all }\psi\in F,$$
which yields the regularity $\dot{u}\in E =  H^2(\Om)\cap H^1_0(\Om)$.

Finally, Theorem \ref{thm1}, \eqref{eq:340} and \eqref{eq:341} yield the following expression for the shape derivative:
\begin{align}\label{dJ_higher_order}
\begin{split}
dJ(\Om)(\ta) &=\langle L(u),p \rangle_{F^*,F} + \partial_s \B(0,u)\\
& =  \iom p( 2D^2u : D\ta + (\Delta \ta)\cdot \nabla u) 
\underbrace{-p\Delta u \divv\ta -pf\divv(\ta)}_{=0 \text{ due to }-\Delta u =f}
 - p\nabla f\cdot\ta\\
& \quad + \int_\Om 
[- D\theta^{\transp} D^2 u-   D^2 u D\theta 
- D^2\theta^{\transp}\nabla u] : D^2 u 
+ \frac{1}{2}|D^2 u|^2 \divv(\ta).
\end{split}
\end{align} 
Using Lemma \ref{lemma_tensor} and $D^2 u^\transp = D^2 u$ we compute 
$$ D\theta^{\transp} D^2 u : D^2 u 
= (D\theta^{\transp} D^2 u)^\transp : (D^2 u)^\transp 
= D^2 u D\theta : D^2 u  = D\theta : (D^2 u)^2,$$
and $(D^2\theta^{\transp}\nabla u ): D^2 u 
= D^2\theta \tp (\nabla u \otimes D^2 u)$.
In view of \eqref{eq:55} and Lemma \ref{lemma_tensor} we also have 
$$p(\Delta \ta)\cdot \nabla u 
= p \tr(D^2\theta^{\transp}\nabla u )  
= p D^2\theta^{\transp}\nabla u : I_d
=  D^2\ta  \tp (p\nabla u\otimes I_d).$$
Using the regularity  $p\in L^2(\Om)$ and $u\in H^2(\Om)\cap H^1_0(\Om)$,  we obtain the following result.
\begin{proposition}\label{prop6}
Let $\Om\in\mathds{P}(\mathcal{D})$ be a domain of class  $\C^{2}$, $\ta\in \C^2_c(\hold,\Rd)$ and $f\in H^{1}(\mathcal{D})$.
Then, the distributed shape derivative of the cost functional \eqref{cost_D2u} subject to the constraint \eqref{eq:555} is
\begin{align}\label{2nd_order_tensor_b}
dJ(\Om)(\ta) &= \iom S_0\cdot \theta + S_1 : D\theta + S_2 \tp D^2\theta,
\end{align}
where $S_2\in L^1(\Om, \R^{d\times d\times d})$, $S_1\in L^1(\Om, \R^{d\times d})$, $S_0\in L^1(\Om, \R^{d})$ are given by $S_0 =  -p\nabla f$,
\begin{align*}
S_1 & =  2 p D^2 u -2 (D^2 u)^2  + \frac{1}{2}|D^2 u|^2 I_d,\ \text{ and }\ 
S_2  = -\nabla u \otimes D^2 u + p\nabla u\otimes I_d.
\end{align*}
\end{proposition}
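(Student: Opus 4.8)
The plan is to treat Proposition~\ref{prop6} as the final bookkeeping step following the application of Theorem~\ref{thm1}. All the analytic content is already in place: the expression \eqref{dJ_higher_order} for $dJ(\Om)(\ta)$ is obtained by inserting into \eqref{dsG} the operator $L(u)$ computed in \eqref{eq:340}, the adjoint $p$, and the derivative $\partial_s \B(0,u)$ from \eqref{eq:341}, after using $-\Delta u = f$ to cancel the two $\divv\ta$ terms carrying $p\Delta u$ and $pf$. What remains is purely algebraic: I would regroup the integrand of \eqref{dJ_higher_order} according to whether it is contracted against $\ta$, $D\ta$, or $D^2\ta$, so as to match the template \eqref{2nd_order_tensor_b} and read off $S_0,S_1,S_2$.

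For the first-order part I would collect the three contributions $2pD^2u : D\ta$, $-D\theta^{\transp}D^2u : D^2u$, $-D^2u\, D\theta : D^2u$ and the trace term $\tfrac12|D^2u|^2\divv(\ta)$. Using the identity $\Sb\Tb:\Ub=\Tb:\Sb^{\transp}\Ub$ from Lemma~\ref{lemma_tensor} together with the symmetry $D^2u^{\transp}=D^2u$, both matrix terms collapse to $-D\theta:(D^2u)^2$, and $\divv(\ta)=I_d:D\ta$; this assembles $S_1 = 2pD^2u - 2(D^2u)^2 + \tfrac12|D^2u|^2 I_d$. For the second-order part I would rewrite the two remaining terms $p(\Delta\ta)\cdot\nabla u$ and $-(D^2\theta^{\transp}\nabla u):D^2u$ as triple contractions with $D^2\ta$: the first via \eqref{eq:55}, which expresses $(\Delta\ta)\cdot\nabla u$ as $\tr(D^2\theta^{\transp}\nabla u)$, and the second via the relation $\mathbb{S}^{\transp}a:\Tb=\mathbb{S}\tp(a\otimes\Tb)$ of Lemma~\ref{lemma_tensor}; this yields $S_2 = -\nabla u\otimes D^2u + p\nabla u\otimes I_d$. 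The single zeroth-order term $-p\nabla f\cdot\ta$ gives $S_0=-p\nabla f$ at once.

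It remains to verify the claimed integrability $S_0,S_1\in L^1(\Om)$ and $S_2\in L^1(\Om)$, which I would do by a direct Cauchy--Schwarz count using the regularities $p\in L^2(\Om)$, $u\in H^2(\Om)\cap H_0^1(\Om)$ and $f\in H^1(\Om)$: every factor ($\nabla f$, $\nabla u$, $p$, and each entry of $D^2u$) lies in $L^2$, and each term of $S_0,S_1,S_2$ is a product of exactly two such factors, hence lies in $L^1$. The only real place a slip can occur is the identification of the third-order tensor $S_2$, where the non-involutive transpose convention of Definition~\ref{def:transpose_third} must be respected; I expect this index bookkeeping---rather than any genuine analytic difficulty---to be the main thing to check.
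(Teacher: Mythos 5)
Your proposal is correct and follows essentially the same route as the paper: both start from the expression \eqref{dJ_higher_order} produced by Theorem~\ref{thm1} (with the cancellation via $-\Delta u = f$), then regroup terms by contraction order using the identities of Lemma~\ref{lemma_tensor}, the symmetry $D^2u^\transp = D^2u$, the relation $\mathbb{S}^{\transp}a:\Tb=\mathbb{S}\tp(a\otimes\Tb)$, and \eqref{eq:55} for the $(\Delta\ta)\cdot\nabla u$ term, and finally confirm $L^1$ integrability from $p\in L^2(\Om)$, $u\in H^2(\Om)\cap H^1_0(\Om)$ and $f\in H^1(\mathcal{D})$. The tensor manipulations you outline (both $D\ta$-terms collapsing to $-D\theta:(D^2u)^2$, and the identification of $S_2$ respecting the third-order transpose convention) are exactly those carried out in the paper's own computation.
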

\begin{remark}
Propositions \ref{prop5} and \ref{prop6} show that the second-order terms $D^2\ta$ appear in the shape derivative $dJ(\Om)(\ta)$  due to the presence of second-order derivatives in either $\A$ or $\mathcal{B}$.
\end{remark}

\section{Linear parabolic equations of second order}\label{sec:time}
Let $\Om\in\mathds{P}(\mathcal{D})$ be  of class  $\C^{2}$, $\ta\in \C^2_c(\hold,\Rd)$, the associated flow
$\Tt:\hold\rightarrow \hold$ and  $\Om_{s}:=T_{s}(\Om)$ be defined as in Section \ref{section1a}.
We assume that $M\in\C^{1}([0,t_{0}]\times\mathcal{D},\R^{d\times d})$
is a symmetric and uniformly positive definite matrix, that is $M(t,y)\ge CI_{d}>0$, for all $(t,y)\in[0,t_0]\times\mathcal{D}$ and some constant
$C>0$.
Suppose also that $f\in L^{2}(0,t_{0};H^{1}(\mathcal{D}))$ and $g\in H^1(\mathcal{D})$.
The dual pairing between
$H^{-1}(\Om_{s})$ and $H_{0}^{1}(\Om_{s})$ is denoted by $\left\langle \cdot , \cdot \right\rangle _{H^{-1}(\Om_{s}),H_{0}^{1}(\Om_{s})}$.

In this section we employ the following spaces:
\begin{align*}
E: & =L^{2}(0,t_{0};H_{0}^{1}(\Om))\cap H^{1}(0,t_{0};H^{-1}(\Om)),\\
F: & =L^{2}(0,t_{0};H_{0}^{1}(\Om))\oplus L^{2}(\Om),\\
F^{*}: & =L^{2}(0,t_{0};H^{-1}(\Om))\oplus L^{2}(\Om).
\end{align*}
We recall that $E$ is a subset of $\C([0,t_0],L^2(\Om))$, see \cite[Section 3, Chapter 1]{lionsmagenes}. The dual pair $\left\langle \cdot ,\cdot \right\rangle _{F^{*},F}$ is defined as
\[
\left\langle (p,q),(\psi_1,\psi_2)\right\rangle _{F^{*},F}=\int_{0}^{t_{0}}\left\langle p,\psi_1\right\rangle _{H^{-1}(\Om),H_{0}^{1}(\Om)}dt+\int_{\Om}q\psi_2 dx.
\]

We consider the following linear parabolic equation:
\begin{align}\label{eq:parabolicmain}
\begin{split}
\partial_{t} u_s -\divv\left(M\nabla u_s\right) & =  f \quad  \textrm{ in } (0,t_{0})\times \Om_{s},\\
u_s & =  0   \quad \textrm{ on }(0,t_{0})\times\partial\Om_{s},\\
u_s & =  g \quad   \textrm{ in }\{0\}\times \Om_{s}.
\end{split}
\end{align}
A weak solution of Equation \eqref{eq:parabolicmain} is a function
$u_{s}\in L^{2}(0,t_{0};H_{0}^{1}(\Om_{s}))\cap H^{1}(0,t_{0};H^{-1}(\Om_{s}))$ satisfying
\begin{align}\label{eq:u_s_omega_s}
\begin{split}
\left\langle \partial_{t} u_{s}(t),\psi\right\rangle _{H^{-1}(\Om_{s}),H_{0}^{1}(\Om_{s})}+\int_{\Om_{s}}M(t)\nabla u_s \cdot \nabla\psi & =  \left\langle f(t),\psi\right\rangle _{H^{-1}(\Om_{s}),H_{0}^{1}(\Om_{s})} 
,\\
u_{s} & =  g \quad   \textrm{ in }\{0\}\times \Om_{s},
\end{split}
\end{align}
for all $\psi\in H_{0}^{1}(\Om_{s})$ and  a.e $t\in[0,t_0]$. 
Recall that if $u_{s}$ is a weak solution of \eqref{eq:parabolicmain}, then $u_{s}\in \C([0,t_{0}],L^{2}(\Om_{s}))$
and $u_{s}$ is well-defined at $t=0$.
The assumptions on $M$ ensure the existence of a unique weak solution $u_{s}$ of Equation \eqref{eq:parabolicmain}; see \cite[Theorem 4.1 of Chapter 3]{lionsmagenes}.

As in the previous subsections, we define the function $\A$ via a pullback. 
First of all one needs to recall the definition of pullback for distributions; see \cite[Definition 3.19]{MR2453959}.
\begin{definition}
The pullback of distributions $\Psi_s : H^{-1}(\Om_s)\to H^{-1}(\Om)$ is defined by
$$ \left\langle \Psi_s(h),\psi\right\rangle _{H^{-1}(\Om),H_{0}^{1}(\Om)}
=
\left\langle h, \xi(s)^{-1}\psi\circ T_s^{-1}\right\rangle _{H^{-1}(\Om_{s}),H_{0}^{1}(\Om_{s})}
\quad \forall \psi\in H^1_0(\Om),$$
with $h\in H^{-1}(\Om_s)$.
The associated superposition operator $\overline{\Psi}_s$ is defined by
\begin{align}\label{eq:psi_us}
\begin{split}
\overline{\Psi}_s: H^{1}(0,t_{0};H^{-1}(\Om_s)) & \to H^{1}(0,t_{0};H^{-1}(\Om)),\\
v & \mapsto [  t\mapsto (\overline{\Psi}_s v)(t)  := \Psi_s(v(t))    ].
\end{split}
\end{align}
\end{definition}
\begin{remark} When restricted to $L^{2}(\Om_s)$, $\Psi_{s}$ corresponds to the usual pullback $\Psi_{s}(h)=h\circ T_{s}$ if $h\in L^{2}(\Om_s)$. 
Notice that the superposition operator $\overline{\Psi}_{s}$ can also be defined as in \eqref{eq:psi_us} for other classes of functions such as  $\overline{\Psi}_{s}: L^{2}(0,t_{0};H^{1}(\Omega_{s}))\to  L^{2}(0,t_{0};H^{1}(\Om))$.
\end{remark}

Now define the transported solution  $u^s := \overline{\Psi}_s u_s\in L^{2}(0,t_{0};H_{0}^{1}(\Om))\cap H^{1}(0,t_{0};H^{-1}(\Om))$.
It can be shown that the superposition operator $\overline{\Psi}_s$ and the partial derivative $\partial_t$ commute, i.e. we have
$$\partial_{t} u^s(t,x) 
= \partial_{t} [( \overline{\Psi}_s u_{s})(t,x)]
= \overline{\Psi}_s \left[ \partial_{t}  u_{s}\right](t,x).$$
Introduce the notations  $f^{s}:=\overline{\Psi}_s f$ and $g^{s}:= \Psi_s  g$.
Using a precomposition with $\Psi_s$ in  \eqref{eq:u_s_omega_s} and using \eqref{eq:psi_us}, we obtain that $u^{s}$ is the unique function satisfying, for a.e. $t\in[0,t_0]$,
\begin{align*}
\left\langle \xi(s)\partial_{t} u^{s}(t),\psi\right\rangle _{H^{-1}(\Om),H_{0}^{1}(\Om)}
+\int_{\Om} \M(s,M_s(t)) \nabla u^s(t)\cdot  \nabla\psi & = \left\langle \xi(s)f^{s}(t),\psi\right\rangle _{H^{-1}(\Om),H_{0}^{1}(\Om)},\\
u^{s} & = g^{s} \quad   \textrm{ in }\{0\}\times \Om ,
\end{align*}
for all $\psi\in H_{0}^{1}(\Om)$, where we have used Definition \ref{def5} and the notation $M_s(t):\hold\to\R^{d\times d}$ given by
\begin{equation}\label{eq:mMs}
M_s(t)(x):=M(t,T_{s}(x)). 
\end{equation}

The assumptions on the matrix $M$ and the Lax-Milgram theorem ensure
the existence of an isomorphism $\mathcal{H}_{s}(t):H_{0}^{1}(\Om)\to H^{-1}(\Om)$,
$s\in[0,s_{1}]$ and $t\in[0,t_{0}]$, satisfying
\[
\left\langle \mathcal{H}_{s}(t)\vp,\psi\right\rangle _{H^{-1}(\Om),H_{0}^{1}(\Om)}
= \int_{\Om} \M(s,M_s(t)) \nabla\vp\cdot  \nabla\psi.
\]
For each $s\in[0,s_{1}]$, we denote by $\mathcal{A}_0(s):E\to F^{*}$
the operator given by: 
\begin{equation}\label{eq:B(s)}
(\mathcal{A}_0(s)\vp)(t) :=(\begin{array}{cc}
\xi(s)\partial_{t}\vp(t)+\mathcal{H}_{s}(t)\vp(t), & \vp(0)\xi(s)\end{array}).
\end{equation}
Let us define the operator $\mathcal{A}:[0,s_{1}]\times E\to F^{*}$ by 
\begin{equation}\label{eq:ABE}
\mathcal{A}(s,\vp) :=\mathcal{A}_0(s)\vp-\mathcal{A}_1(s), 
\end{equation}
where  $\mathcal{A}_1(s) :=(\xi(s)f^{s} ,\xi(s)g^{s} )\in F^{*}$ is given
by 
\[
\mathcal{A}_1(s)(\psi_1,\psi_2)  :=\int_{0}^{t_{0}} \iom  \xi(s)f^{s} \psi_1\, +\int_{\Om}  \xi(s) g^{s}\psi_2,
\quad \forall (\psi_1,\psi_2)\in F.
\]
Note that $\mathcal{A}(s,v)=0$ if and only if $v=u^{s}$, 
that is, the solutions of $\mathcal{A}(s,v)=0$ are precisely the
weak solutions of the parabolic equation \eqref{eq:parabolicmain} after
the precomposition $u^s := \overline{\Psi}_s u_s$.

Gathering the above informations, we obtain the following result.
\begin{theorem}
\label{thm:A_parabolic} Under the above assumptions,
the function $\mathcal{A}$ belongs to $\mathcal{C}^{1}([0,s_{1}]\times E,F^{*})$
and $A:=\partial_{\varphi}\mathcal{A}(0,u):E\to F^{*}$ is equal to
$\mathcal{A}_{0}(0)$ and is an isomorphism.
\end{theorem}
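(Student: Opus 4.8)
The plan is to establish the three assertions in turn — the $\mathcal{C}^1$ regularity of $\mathcal{A}$, the identity $\partial_\vp\mathcal{A}(0,u)=\mathcal{A}_0(0)$, and the isomorphism property of $\mathcal{A}_0(0)$ — treating the regularity as the bulk of the work and deducing the other two quickly. First I would verify $\mathcal{A}\in\mathcal{C}^1([0,s_1]\times E,F^*)$ by decomposing $\mathcal{A}(s,\vp)=\mathcal{A}_0(s)\vp-\mathcal{A}_1(s)$ via \eqref{eq:B(s)} and \eqref{eq:ABE} and handling the $s$- and $\vp$-dependence componentwise. Since $\mathcal{A}_0(s)$ is linear and bounded in $\vp$, it suffices to show that $s\mapsto\mathcal{A}_0(s)\in\mathcal{L}(E,F^*)$ and $s\mapsto\mathcal{A}_1(s)\in F^*$ are of class $\mathcal{C}^1$. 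In the first component of $\mathcal{A}_0(s)\vp$, the term $\xi(s)\partial_t\vp(t)$ is $\mathcal{C}^1$ in $s$ by Lemma \ref{lem01} (with $k=2$, so $\xi\in\mathcal{C}^1([0,s_1],\mathcal{C}^1(\hold))$) and linear in $\vp$, while $\mathcal{H}_s(t)\vp(t)$ is controlled through its bilinear form built from $\M(s,M_s(t))$ with $M_s(t)(x)=M(t,T_s(x))$ as in \eqref{eq:mMs}; the $\mathcal{C}^1$-dependence on $s$ follows from the $\mathcal{C}^1$ regularity of $s\mapsto\M(s,Q)$ in Lemma \ref{lem01} together with $M\in\mathcal{C}^1([0,t_0]\times\mathcal{D})$ and the $\mathcal{C}^2$-regularity of $s\mapsto T_s$. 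The second component $\vp(0)\xi(s)$ is treated analogously.

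The more delicate point is the regularity of $\mathcal{A}_1(s)=(\xi(s)f^s,\xi(s)g^s)$, since $f^s=\overline{\Psi}_s f$ and $g^s=\Psi_s g$ involve the pullback and its associated superposition operator in the Bochner setting. For $g^s=g\circ T_s$ with $g\in H^1(\mathcal{D})$, differentiability of $s\mapsto g^s$ in $L^2(\Om)$ follows from standard results on precomposition, e.g.\ \cite[Lemma 5.3.9]{MR3791463}. For $f^s$ one must ensure that the corresponding spatial differentiability holds in an $L^2$-in-time sense: starting from $f\in L^2(0,t_0;H^1(\mathcal{D}))$, one applies $\overline{\Psi}_s$ pointwise in $t$ and invokes a uniform-in-$t$ bound on the $s$-derivative to conclude $s\mapsto f^s\in\mathcal{C}^1([0,s_1],L^2(0,t_0;H^{-1}(\Om)))$. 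This is where I expect the main obstacle to lie: one must verify that the $s$-derivatives of all the time-parametrized objects are uniformly controlled in $t$, so that differentiation in $s$ commutes with the Bochner integral over $[0,t_0]$; everything else reduces to smoothness estimates already packaged in Lemma \ref{lem01}.

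Granting the $\mathcal{C}^1$ regularity, the identity $\partial_\vp\mathcal{A}(0,u)=\mathcal{A}_0(0)$ is immediate: $\mathcal{A}_0(s)$ is linear in $\vp$ and $\mathcal{A}_1(s)$ is independent of $\vp$, so $\partial_\vp\mathcal{A}(s,\vp)=\mathcal{A}_0(s)$ for every $(s,\vp)$, and evaluation at $s=0$ gives $A=\mathcal{A}_0(0)$. Finally, for the isomorphism property I would observe that, since $\xi(0)=1$ and $M_0(t)=M(t)$, the operator $A=\mathcal{A}_0(0)$ sends $\vp\in E$ to $(\partial_t\vp+\mathcal{H}_0\vp,\vp(0))\in F^*$, which is precisely the weak parabolic operator together with the initial-trace map. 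Bijectivity then follows directly from the existence and uniqueness of weak solutions of \eqref{eq:parabolicmain} for arbitrary data $(f,g)\in F^*$, as guaranteed by \cite[Theorem 4.1 of Chapter 3]{lionsmagenes}; boundedness of $A$ is clear from its explicit form, and the closed graph theorem upgrades the algebraic inverse to a bounded one, so $A$ is an isomorphism.
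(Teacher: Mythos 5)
Your proposal follows the same overall route as the paper's proof: the decomposition $\mathcal{A}(s,\vp)=\mathcal{A}_0(s)\vp-\mathcal{A}_1(s)$, the observation that linearity of $\mathcal{A}_0(s)$ in $\vp$ immediately gives $A=\partial_\vp\mathcal{A}(0,u)=\mathcal{A}_0(0)$, and the appeal to \cite[Chapter 3, Theorem 4.1]{lionsmagenes} (maximal regularity, i.e.\ existence and uniqueness of weak solutions for data in $F^*$) for the isomorphism property are exactly the paper's steps, and those parts of your argument are sound.

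There is, however, a gap at precisely the point you dismiss in one line. You claim the term $\xi(s)\partial_t\vp(t)$ is ``$\C^1$ in $s$ by Lemma \ref{lem01} \dots and linear in $\vp$''. Lemma \ref{lem01} gives $s\mapsto\xi(s)\in\C^1([0,s_1],\C^1(\hold))$, but that statement concerns $\xi$ as a function; it does not by itself yield differentiability of $s\mapsto\bigl[\vp\mapsto\xi(s)\partial_t\vp\bigr]$ as a map into $\mathcal{L}\bigl(E,L^2(0,t_0;H^{-1}(\Om))\bigr)$, because $\partial_t\vp(t)$ is only an element of $H^{-1}(\Om)$ and the product $\xi(s)\partial_t\vp(t)$ is defined by duality. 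What must be proved is that, with $z(h):=\frac{\xi(s+h)-\xi(s)}{h}-\xi'(s)$, multiplication by $z(h)$ is small in operator norm on $H^{-1}(\Om)$; concretely,
\begin{equation*}
\left|\int_0^{t_0}\left\langle z(h)\partial_t\vp,\psi_1\right\rangle_{H^{-1}(\Om),H_0^1(\Om)}\right|
\le\|\partial_t\vp\|_{L^2(0,t_0;H^{-1}(\Om))}\,\|z(h)\|_{\C^1}\,\|\psi_1\|_{L^2(0,t_0;H_0^1(\Om))},
\end{equation*}
which rests on moving $z(h)$ onto the test function and on the bound $\|z(h)\psi_1\|_{H_0^1(\Om)}\le C\|z(h)\|_{\C^1}\|\psi_1\|_{H_0^1(\Om)}$ --- the $\C^1$ norm is genuinely needed here since $\nabla(z\psi_1)=\psi_1\nabla z+z\nabla\psi_1$ --- followed by $\|z(h)\|_{\C^1}\to0$ from Lemma \ref{lem01}. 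This duality estimate is the entire technical content of the paper's proof; it is singled out there as the step that requires extra attention ``since we are dealing with distributions''. Conversely, the term you flag as the main obstacle, the $\C^1$ dependence of $s\mapsto\xi(s)f^s$ in the Bochner space, is comparatively routine: pointwise-in-$t$ differentiability of the precomposition plus dominated convergence in $t$ (the difference quotients are dominated by $C\|f(t)\|_{H^1(\hold)}\in L^2(0,t_0)$); the paper dispatches it, together with all the other integrands, with a one-line remark. So your skeleton is the right one, but the weight is misplaced: the step you gloss over is the one that needs an argument, and as written your justification for it is incomplete.
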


\begin{proof}
Observe that for $\psi=(\psi_{1},\psi_{2})\in F$, we have
\begin{equation}
\begin{aligned}\left\langle \mathcal{A}(s,\varphi),\psi\right\rangle _{F^{*},F} & =\left\langle \mathcal{A}_{0}(s)\varphi,\psi\right\rangle _{F^{*},F}-\left\langle \mathcal{A}_{1}(s),\psi\right\rangle _{F^{*},F}\\
 & =\int_{0}^{t_{0}}\left\langle \xi(s)\partial_{t}\varphi(t),\psi_{1}\right\rangle _{H^{-1}(\Omega),H_{0}^{1}(\Omega)}+\int_{0}^{t_{0}}\int_{\Omega}\mathcal{M}(s,M_{s}(t))\nabla\varphi(t)\cdot\nabla\psi_{1}\\
 & \quad-\int_{0}^{t_{0}}\int_{\Omega}\xi(s)f^{s}(t)\psi_{1}+\int_{\Omega}\varphi(0)\psi_{2}\xi(s)-\int_{\Omega}g^{s}\psi_{2}\xi(s).
\end{aligned}
\label{eq:Asvarphiparabolic}
\end{equation}
Using Lemma \ref{lem01}, the assumed regularity $f\in L^{2}(0,t_{0};H^{1}(\mathcal{D}))$
and $g\in H^{1}(\mathcal{D})$, we can formally derive the terms inside
the integrals of Equation \eqref{eq:Asvarphiparabolic} to find $\partial_{\varphi}\mathcal{A}(s,\varphi)$
and $\partial_{s}\mathcal{A}(s,\varphi)$. It is not difficult to
see that the formal computation leads to the right Fréchet derivatives
and that these derivatives are continuous.

However, the term $\int_{0}^{t_{0}}\left\langle \xi(s)\partial_{t}\varphi(t),\psi_{1}\right\rangle _{H^{-1}(\Omega),H_{0}^{1}(\Omega)}$
requires a little more attention, since we are dealing with distributions.
Formally we have that
\[
\partial_{s}\int_{0}^{t_{0}}\left\langle \xi(s)\partial_{t}\varphi(t),\psi_{1}\right\rangle _{H^{-1}(\Omega),H_{0}^{1}(\Omega)}=\int_{0}^{t_{0}}\left\langle \xi'(s)\partial_{t}\varphi(t),\psi_{1}\right\rangle _{H^{-1}(\Omega),H_{0}^{1}(\Omega)}.
\]
Using the notation $F_{1}:=L^{2}(0,t_{0};H^{1}(\Omega))$ and $F_{1}^{*}:=L^{2}(0,t_{0};H^{-1}(\Omega))$,
we need to show that
\[
\lim_{h\to0}\left\Vert z(h)\partial_{t}\varphi\right\Vert _{F_{1}^{*}}=0\quad
\text{ where }\quad
z(h):=\frac{\xi(s+h)-\xi(s)}{h}-\xi'(s).
\]
In view of Lemma \ref{lem01} we have $\xi'\in \C^0([0,s_0],\C^1(\mathcal{D}))$, therefore
\begin{align*}
\left|\int_{0}^{t_{0}}\left\langle z(h)\partial_{t}\varphi,\psi_{1}\right\rangle _{H^{-1}(\Omega),H_{0}^{1}(\Omega)}\right| & \leq\int_{0}^{t_{0}}\|\partial_{t}\varphi\|_{H^{-1}(\Omega)}\|z(h)\psi_{1}\|_{H_{0}^{1}(\Omega)}\\
 & \leq\left(\int_{0}^{t_{0}}\|\partial_{t}\varphi\|_{H^{-1}(\Omega)}^{2}\right)^{1/2}\left(\int_{0}^{t_{0}}\|z(h)\psi_{1}\|_{H_{0}^{1}(\Omega)}^{2}\right)^{1/2}\\
 & \leq\|\partial_{t}\varphi\|_{F_{1}^{*}}\|z(h)\|_{\mathcal{C}^{1}}\|\psi_{1}\|_{F_{1}}.
\end{align*}
Hence it follows that
\begin{align*}
\sup_{\|\psi_{1}\|_{F_{1}}\leq1}\left|\int_{0}^{t_{0}}\left\langle z(h)\partial_{t}\varphi,\psi_{1}\right\rangle _{H^{-1}(\Omega),H_{0}^{1}(\Omega)}\right| & \leq\|\partial_{t}\varphi\|_{F_{1}^{*}}\|z(h)\|_{\mathcal{C}^{1}}.
\end{align*}
Again, as $\xi'\in \C^0([0,s_0],\C^1(\mathcal{D}))$ we get $\|z(h)\|_{\mathcal{C}^{1}}\to0$
as $h\to0$; 
this proves that $\left\Vert z(h)\partial_{t}\varphi\right\Vert _{F_{1}^{*}}\to0$.

Finally, in view of \eqref{eq:ABE} and the linearity
of $\mathcal{A}_{0}(s)$ in $\varphi$, we see that $A:=\partial_{\varphi}\mathcal{A}(0,u)=\mathcal{A}_{0}(0)$,
where
\[
\mathcal{A}_{0}(0)\varphi(t):=(\begin{array}{cc}
\partial_{t}\varphi(t)+\mathcal{H}_{0}(t)\varphi(t), & \varphi(0)\end{array}).
\]
Consequently,  the operator $A:E\to F^{*}$ is an isomorphism by the maximal regularity of parabolic equations; see for
instance \cite[Chapter 3, Theorem 4.1]{lionsmagenes}.
\end{proof}

\subsection{Adjoint state equation}
In this section we obtain the equation for the adjoint state, for two different cost functionals.
Unlike the problems studied in the other sections, the adjoint equation in the parabolic case requires an integration by part in time to become more explicit.
In this section we still assume that $f\in L^{2}(0,t_{0};H^{1}(\mathcal{D}))$ and $g\in H^1(\mathcal{D})$.
Recall that $\Om\in \mathds{P}(\mathcal{D})$ is of class $\C^2$ and $u^{s}\in E$ is the solution of $\mathcal{A}(s,u^{s})=0$.
\subsubsection{Adjoint for the first cost function}\label{sec:adj_J1}
Let $u_{d}\in L^{2}(0,t_{0};H^1(\mathcal{D}))$ and consider the cost functional $J_1:\mathds{P}(\mathcal{D})\to\R$ given by
\begin{equation}\label{cost1_parabolic}
J_{1}(\Om) :=\frac{1}{2}\int_{0}^{t_{0}}\iom(u_{s}-u_{d})^{2}.
\end{equation}
Using the change of variables $\Om \ni x\mapsto T_{s}(x)\in\Om_{s}$ we obtain
\[
J_1(\Om_{s})=\frac{1}{2}\int_{0}^{t_{0}}\int_{\Om_{s}}(u_{s}-u_{d})^{2}=\frac{1}{2}\int_{0}^{t_{0}}\int_{\Om}(u^{s}-u_{d}\circ T_{s})^{2}\xi(s).
\]
Thus,
we define the function $\B\in \C^{1}([0,s_{1}]\times E,\R)$
by
\begin{equation}\label{eq:bsvp-2}
\B(s,\vp):=\frac{1}{2}\int_{0}^{t_{0}}\int_{\Om}(\vp-u_{d}\circ T_{s})^{2}\xi(s)
\end{equation}
 and we compute 
\[
\left\langle \bl(u),\hvp\right\rangle _{E^{*},E} := \langle\partial_{\vp}\B(0,u),\hvp\rangle_{E^{*},E}
=\int_{0}^{t_{0}}\int_{\Om}\hvp(u-u_{d}).
\]
Hence, the linear form $\bl(u)\in E^{*}$ can be identified with $(u-u_{d})|_{\Om}\in  L^{2}(0,t_{0};L^{2}(\Om))$.

In view of Theorem \ref{thm:A_parabolic},  $A:=\partial_{\vp}\mathcal{A}(0,u):E\to F^{*}$ is an isomorphism and
we can apply Theorem \ref{thm1}.
Hence, there exists
a unique solution $(p,q)\in F$ to the adjoint state equation 
\begin{align}\label{eq:adjoint_parab1}
\left\langle A^{*}(p,q),\vp\right\rangle _{E^{*},E}=-\left\langle \bl(u),\vp\right\rangle _{E^{*},E}\quad \text{ for all }\vp\in E.
\end{align}
We now seek an explicit expression of the adjoint equation. 
\begin{lemma}\label{lem:04}
The solution $(p,q)\in F$ of \eqref{eq:adjoint_parab1} satisfies $p\in E= L^{2}(0,t_{0};H_{0}^{1}(\Om))\cap H^{1}(0,t_{0};H^{-1}(\Om))$ and $q=p(0)$, and
 is the unique weak solution of the following backwards parabolic  equation with terminal condition:
\begin{align}\label{eq:adjJ1}
\begin{split}
-\partial_{t} p +\divv\left(M\nabla p\right) & =  -(u-u_d) \quad  \textrm{ in } (0,t_{0})\times \Om,\\
p & =  0   \quad \textrm{ on }(0,t_{0})\times\partial\Om,\\
p & =  0 \quad   \textrm{ in }\{t_0\}\times \Om.
\end{split}
\end{align}
\end{lemma}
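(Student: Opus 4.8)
The plan is to compute the adjoint operator $A^*$ explicitly and then recognize the resulting variational identity as the weak formulation of the backwards parabolic problem \eqref{eq:adjJ1}. By Theorem \ref{thm1} and Lemma \ref{lem-adjoint}, $A^*$ is an isomorphism, so the solution $(p,q)\in F$ exists and is unique; the task is to identify it. Recall from Theorem \ref{thm:A_parabolic} that $A=\mathcal{A}_0(0)$ acts by $(A\vp)(t)=(\partial_t\vp(t)+\mathcal{H}_0(t)\vp(t),\,\vp(0))$, where $\langle \mathcal{H}_0(t)\vp,\psi\rangle_{H^{-1}(\Om),H^1_0(\Om)}=\int_\Om M(t)\nabla\vp\cdot\nabla\psi$.

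First I would write out the defining relation of the adjoint, namely
\[
\langle A^*(p,q),\vp\rangle_{E^*,E}=\langle A\vp,(p,q)\rangle_{F^*,F}
=\int_0^{t_0}\langle \partial_t\vp(t),p(t)\rangle_{H^{-1}(\Om),H^1_0(\Om)}+\int_0^{t_0}\int_\Om M(t)\nabla\vp\cdot\nabla p+\int_\Om \vp(0)q,
\]
which by \eqref{eq:adjoint_parab1} must equal $-\langle\bl(u),\vp\rangle_{E^*,E}=-\int_0^{t_0}\int_\Om \vp(u-u_d)$ for all $\vp\in E$. The central manipulation is the integration by parts in time on the first term. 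Since $\vp\in E\subset\C([0,t_0],L^2(\Om))$ and, once we establish $p\in E$, the product $\langle\vp(t),p(t)\rangle$ is absolutely continuous, we get
\[
\int_0^{t_0}\langle \partial_t\vp,p\rangle\,dt=\int_\Om\vp(t_0)p(t_0)-\int_\Om\vp(0)p(0)-\int_0^{t_0}\langle\partial_t p,\vp\rangle\,dt.
\]
Substituting back, the identity becomes, for all $\vp\in E$,
\[
\int_0^{t_0}\langle -\partial_t p+\text{``}\divv(M\nabla p)\text{''},\vp\rangle+\int_\Om\vp(t_0)p(t_0)+\int_\Om\vp(0)(q-p(0))=-\int_0^{t_0}\int_\Om\vp(u-u_d).
\]
I would then test against suitable subclasses of $\vp$ to read off each component separately: taking $\vp$ with $\vp(0)=\vp(t_0)=0$ yields the interior equation $-\partial_t p+\divv(M\nabla p)=-(u-u_d)$ in the weak sense; taking $\vp$ with $\vp(t_0)=0$ but $\vp(0)$ free forces $q=p(0)$; and taking $\vp$ with $\vp(0)=0$ but $\vp(t_0)$ free forces the terminal condition $p(t_0)=0$. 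The homogeneous Dirichlet condition $p=0$ on $(0,t_0)\times\partial\Om$ is built into the fact that $p(t)\in H^1_0(\Om)$.

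The main obstacle is the regularity bootstrap: a priori we only know $(p,q)\in F=L^2(0,t_0;H^1_0(\Om))\oplus L^2(\Om)$, so $p$ has no built-in time derivative and the integration by parts is not yet justified. The remedy is to invoke maximal parabolic regularity for the backwards problem \eqref{eq:adjJ1}: reversing time via $\tilde p(t):=p(t_0-t)$ converts it into a forward problem with source $(u-u_d)(t_0-t)\in L^2(0,t_0;L^2(\Om))$ and zero initial datum, whose unique weak solution lies in $E$ by \cite[Chapter 3, Theorem 4.1]{lionsmagenes}. Hence $p\in E$, and in particular $\partial_t p\in L^2(0,t_0;H^{-1}(\Om))$ and $p\in\C([0,t_0],L^2(\Om))$, which legitimizes the integration by parts above. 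I would then check that this $p$ together with $q:=p(0)$ satisfies the adjoint identity \eqref{eq:adjoint_parab1}, and conclude by uniqueness (guaranteed since $A^*$ is an isomorphism) that this is the solution $(p,q)$ produced by Theorem \ref{thm1}.
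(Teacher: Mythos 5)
Your proposal is correct and follows essentially the same route as the paper's proof: construct the candidate adjoint by solving the time-reversed forward problem (whose solution lies in $E$ by Lions--Magenes maximal regularity), verify via integration by parts in time that its time-reversal satisfies the adjoint identity \eqref{eq:adjoint_parab1}, conclude by uniqueness that it coincides with $(p,q)$, and thereby obtain $p\in E$, $q=p(0)$ and the backwards equation \eqref{eq:adjJ1}. The only caution is ordering: as phrased mid-argument, defining $\tilde p(t):=p(t_0-t)$ presupposes that the adjoint $p$ solves \eqref{eq:adjJ1}, which is what is being proved; the non-circular order is the one in your final sentence (and in the paper) --- first build the solution of the time-reversed problem, then identify it with the adjoint by uniqueness.
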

\begin{proof}
We
have by definition of $A^{*}$ that 
\[
\left\langle A\vp,(p,q)\right\rangle _{F^{*},F}=-\left\langle \bl(u),\vp\right\rangle _{E^{*},E}\quad \text{ for all }\vp\in E.
\]
This is equivalent to, using the fact that $\varphi(0)\in L^2(\Om)$ due to $E\subset\C([0,t_0],L^2(\Om))$, 
\begin{equation}\label{eq:10}
\int_{0}^{t_{0}}\left\langle \partial_{t}\vp,p\right\rangle _{H^{-1}(\Om),H_{0}^{1}(\Om)}dt
+\int_{0}^{t_{0}}\int_{\Om}M\nabla\vp\cdot\nabla p
+\int_{\Om}\vp(0)q=-\int_{0}^{t_{0}}\int_{\Om}\vp(u-u_{d})\text{ for all }\vp\in E.
\end{equation}

Now let us consider $\tilde{p}\in E$ the unique weak solution of
\begin{align*}
\begin{split}\partial_{t}\tilde{p}(t)-\text{div}\left(M(t_{0}-t)\nabla\tilde{p}(t)\right) & =-(u(t_{0}-t)-u_{d}(t_{0}-t))\quad\textrm{ in }(0,t_{0})\times\Omega_{s},\\
\tilde{p} & =0\quad\textrm{ on }(0,t_{0})\times\partial\Omega_{s},\\
\tilde{p} & =0\quad\textrm{ in }\{0\}\times\Omega_{s}.
\end{split}
\end{align*}
By the properties of the matrix $M$ and as $(u(t_{0}-t)-u_{d}(t_{0}-t))\in F$, this solution exists and its unique \cite[Theorem 4.1 of Chapter 3]{lionsmagenes}. By definition, it satisfies: 
\begin{align*}
& \int_{0}^{t_{0}}\left\langle \partial_{t}\tilde{p},\vp\right\rangle _{H^{-1}(\Om),H_{0}^{1}(\Om)}  dt
+\int_{0}^{t_{0}}\int_{\Om}M(t_0 -t)\nabla\tilde{p}\cdot\nabla\vp 
+\int_{\Om}\tilde{p}(0)\hat{\psi}(0)\\
& \qquad=-\int_{0}^{t_{0}}\int_{\Om}\vp(u(t_{0}-t)-u_{d}(t_{0}-t))\quad\text{ for all }(\vp,\hat{\psi})\in F.
\end{align*}
Let us choose specific test functions $\vp\in E$
and $\hat{\psi}=\vp(0)\in L^{2}(\Om)$.
Integrating by part in time the term depending on  $\partial_{t}\tilde{p}$, also using the fact that $M$ is symmetric, we get 
\begin{align*}
& -\int_{0}^{t_{0}}\left\langle \partial_{t}\vp,\tilde{p}\right\rangle _{H^{-1}(\Om),H_{0}^{1}(\Om)}dt
+\int_{0}^{t_{0}}\int_{\Om}M(t_0-t)\nabla\vp\cdot\nabla \tilde{p}
+\int_{\Om}\tilde{p}(t_{0})\vp(t_{0})\\
& \qquad =-\int_{0}^{t_{0}}\int_{\Om}\vp(u(t_{0}-t)-u_{d}(t_{0}-t))\quad \text{ for all }\vp\in E.
\end{align*}
Now introduce $p^{\dagger}(t)=\tilde{p}(t_{0}-t)$ and $\vp^{\dagger}(t)=\vp(t_{0}-t)$,
we get using the change of variable $t\mapsto t_{0}-t$: 
\begin{align}\label{eq:12}
\begin{split}
& \int_{0}^{t_{0}}\left\langle \partial_{t}\vp^{\dagger},p^{\dagger}\right\rangle _{H^{-1}(\Om),H_{0}^{1}(\Om)}dt+\int_{0}^{t_{0}}\int_{\Om}M(t)\nabla\vp^{\dagger}\cdot\nabla p^{\dagger}+\int_{\Om}p^{\dagger}(0)\vp^{\dagger}(0)\\
& \qquad =-\int_{0}^{t_{0}}\int_{\Om}\vp^{\dagger}(u-u_{d})\quad \text{ for all }\vp^{\dagger}\in E.
\end{split}
\end{align}
In view of \eqref{eq:10} and the uniqueness of the solution $(p,q)\in F$ of \eqref{eq:adjoint_parab1},
this shows that $p^{\dagger}=p$ and $q=p^{\dagger}(0) =p(0)$.
Also, the adjoint $p$ has the higher regularity $p\in E$ since $p^{\dagger}\in E$, therefore we can integrate by part \eqref{eq:10} in time which yields
\begin{align}\label{eq:13-2}
\begin{split}
& -\int_{0}^{t_{0}}\left\langle \partial_{t}p,\vp\right\rangle _{H^{-1}(\Om),H_{0}^{1}(\Om)}dt+\int_{0}^{t_{0}}\int_{\Om}M\nabla\vp\cdot\nabla p+\int_{\Om}p(t_{0})\vp(t_{0})\\
& \qquad =-\int_{0}^{t_{0}}\int_{\Om}\vp(u-u_{d})\quad \text{ for all }\vp\in E.
\end{split}
\end{align}
Finally, using the fact that $M$ is symmetric we obtain \eqref{eq:adjJ1}.
\end{proof}
\subsubsection{Adjoint for the second cost function}
The second cost functional is $J_2:\mathds{P}(\mathcal{D})\to\R$ given by
\begin{equation}\label{cost2_parabolic}
J_{2}(\Om) :=\frac{1}{2}\iom (u(t_{0})-u_{d})^{2},
\end{equation}
where $u_{d}\in H^{1}(\mathcal{D})$ is a given function independent of time.
Proceeding in a similar way as in the previous subsections, using the change of variables $\Om\ni x\mapsto T_{s}(x)\in\Om_{s}$ in $J_{2}(\Om_s)$, we introduce the auxiliary function
\begin{equation}\label{eq:bsvp-3}
\B(s,\vp):=\frac{1}{2}\int_{\Om}(\vp(t_{0})-u_{d}\circ T_{s})^{2}\xi(s).
\end{equation}
We compute
\[
\langle \bl(u),\hvp\rangle_{E^{*},E} :=\partial_{\vp}\B(0,u)(\hvp) =\int_{\Om}\hvp(t_{0})(u(t_{0})-u_{d}).
\]
 Thus the linear form $\bl(u)\in E^{*}$ can be identified with $\delta_{t_{0}}\otimes(u(t_{0})-u_{d})|_{\Om}$,
where $\delta_{t_{0}}$ is a Dirac measure at time $t_{0}$. Here $\otimes$ stands for the tensor product of distributions.

Then, the adjoint state is the
unique solution $(p,q)\in F$ to the equation 
\begin{align}\label{eq:adjoint_parab1b}
\left\langle A^{*}(p,q),\vp\right\rangle _{E^{*},E}=-\left\langle \bl(u),\vp\right\rangle _{E^{*},E}\quad \text{ for all }\vp\in E.
\end{align}
We now seek an explicit expression of the adjoint equation. 
\begin{lemma}
The solution $(p,q)\in F$ of \eqref{eq:adjoint_parab1b} satisfies $p\in E= L^{2}(0,t_{0};H_{0}^{1}(\Om))\cap H^{1}(0,t_{0};H^{-1}(\Om))$ and $q=p(0)$, and
 is the unique weak solution of the following backwards parabolic  equation with terminal condition:
\begin{align}\label{eq:adjJ2}
\begin{split}
-\partial_{t} p +\divv\left(M\nabla p\right) & = 0 \quad  \textrm{ in } (0,t_{0})\times \Om,\\
p & =  0   \quad \textrm{ on }(0,t_{0})\times\partial\Om,\\
p & =   -(u(t_0) - u_d) \quad   \textrm{ in }\{t_0\}\times \Om.
\end{split}
\end{align}
\end{lemma}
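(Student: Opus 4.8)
The plan is to follow closely the argument used for Lemma~\ref{lem:04}, the only essential structural difference being that the data now enters through an endpoint term rather than a distributed source. First I would unfold the adjoint equation \eqref{eq:adjoint_parab1b} using the definition of the adjoint, rewriting it as $\langle A\vp,(p,q)\rangle_{F^*,F}=-\langle\bl(u),\vp\rangle_{E^*,E}$ for all $\vp\in E$. With $A=\mathcal{A}_0(0)$ from Theorem~\ref{thm:A_parabolic} and the expression for $\bl(u)$ computed above, this becomes
\begin{align*}
\int_0^{t_0}\langle\partial_t\vp,p\rangle_{H^{-1}(\Om),H^1_0(\Om)}\,dt+\int_0^{t_0}\iom M\nabla\vp\cdot\nabla p+\iom\vp(0)q=-\iom\vp(t_0)(u(t_0)-u_d)
\end{align*}
for all $\vp\in E$, the analogue of \eqref{eq:10}, where now the right-hand side is concentrated at the final time $t_0$.

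Next I would introduce an auxiliary forward problem to bootstrap the regularity of $p$. Since $E\subset\C([0,t_0],L^2(\Om))$ we have $u(t_0)\in L^2(\Om)$, and $u_d\in H^1(\mathcal{D})$, so $-(u(t_0)-u_d)|_\Om\in L^2(\Om)$ is an admissible initial datum. I would let $\tilde p\in E$ be the unique weak solution (existence and uniqueness by \cite[Theorem 4.1 of Chapter 3]{lionsmagenes}) of the forward parabolic problem with time-reversed coefficient, \emph{zero source}, lateral Dirichlet condition, and initial condition $\tilde p(0)=-(u(t_0)-u_d)$. Writing its weak formulation against a test pair in $F$, integrating by parts in time, and then setting $p^\dagger(t):=\tilde p(t_0-t)$, $\vp^\dagger(t):=\vp(t_0-t)$ and changing variables $t\mapsto t_0-t$, I expect to arrive at exactly the identity displayed above with $p$ replaced by $p^\dagger$ and $q$ by $p^\dagger(0)$. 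By the uniqueness of the solution $(p,q)\in F$ of \eqref{eq:adjoint_parab1b}, this forces $p=p^\dagger$ and $q=p(0)$; in particular $p\in E$, so the adjoint enjoys the higher regularity $p\in L^2(0,t_0;H^1_0(\Om))\cap H^1(0,t_0;H^{-1}(\Om))$.

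Finally, knowing $p\in E$ legitimizes integration by parts in time directly in the identity above. Using $q=p(0)$, the two terms involving $\vp(0)$ cancel and, since $M$ is symmetric, one obtains
\begin{align*}
-\int_0^{t_0}\langle\partial_t p,\vp\rangle_{H^{-1}(\Om),H^1_0(\Om)}\,dt+\int_0^{t_0}\iom M\nabla\vp\cdot\nabla p+\iom p(t_0)\vp(t_0)=-\iom\vp(t_0)(u(t_0)-u_d)
\end{align*}
for all $\vp\in E$. Taking first $\vp$ with $\vp(t_0)=0$ and varying $\vp$ in the interior recovers the backward equation $-\partial_t p+\divv(M\nabla p)=0$ together with the lateral condition; then letting $\vp(t_0)$ range over $L^2(\Om)$ matches the endpoint terms and yields the terminal condition $p(t_0)=-(u(t_0)-u_d)$, which is precisely \eqref{eq:adjJ2}.

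I expect the main obstacle to be the careful bookkeeping of the endpoint contributions. Because the data sits at $t=t_0$ rather than in a distributed source, the term $\iom\vp(t_0)(u(t_0)-u_d)$ must be produced \emph{exactly} by the boundary term of the time integration by parts, and one must verify that this endpoint datum survives the time reversal $t\mapsto t_0-t$ and lands as the initial condition of the auxiliary forward problem while no spurious endpoint term at $t=0$ remains (this is where $q=p(0)$ is forced). The interior equation and the regularity bootstrap then follow exactly as in Lemma~\ref{lem:04}.
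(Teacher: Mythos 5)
Your proposal is correct and follows essentially the same route as the paper: unfolding \eqref{eq:adjoint_parab1b}, introducing the time-reversed auxiliary problem with zero source and initial datum $-(u(t_0)-u_d)$, identifying $p=p^\dagger$ and $q=p(0)$ by uniqueness to gain $p\in E$, and then integrating by parts in time to read off \eqref{eq:adjJ2}. The paper's own proof is just an abbreviated version of this, deferring the bookkeeping to the proof of Lemma \ref{lem:04}, and your filled-in details (including the admissibility of the $L^2$ endpoint datum and the cancellation forced by $q=p(0)$) match it exactly.
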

\begin{proof}
The proof of this result is similar to the proof of Lemma \ref{lem:04}, thus
we only highlight here the main differences. 
We define $\tilde{p}\in E$
as the unique solution to
\begin{align*}
& \int_{0}^{t_{0}}\left\langle \partial_{t}\tilde{p},\vp\right\rangle _{H^{-1}(\Om),H_{0}^{1}(\Om)}  dt
+\int_{0}^{t_{0}}\int_{\Om}M(t_0 -t)\nabla\tilde{p}\cdot\nabla\vp 
+\int_{\Om}\tilde{p}(0)\hat{\psi}(0)\\
& \quad =  -\int_{\Om}\hat{\psi}(u(t_{0})-u_{d})\quad \text{ for all }(\vp,\hat{\psi})\in F.
\end{align*}
In particular we can choose specific test functions $\vp\in E$
and $\hat{\psi}=\vp(0)\in L^{2}(\Om)$.
Following the same steps as in the proof of Lemma \ref{lem:04},
this leads to 
\begin{align}\label{eq:14-2}
\begin{split}
& -\int_{0}^{t_{0}}\int_{\Om}\left\langle \partial_{t}p,\vp^{\dagger}\right\rangle _{H^{-1}(\Om)\times H_{0}^{1}(\Om)}+\int_{0}^{t_{0}}\int_{\Om}M\nabla\vp^{\dagger}\cdot\nabla p+\int_{\Om}p(t_{0})\vp^{\dagger}(t_{0})\\
&\qquad  = -\int_{\Om}\vp^{\dagger}(t_{0})(u(t_{0})-u_{d})\quad \text{ for all }\vp^{\dagger}\in E. 
\end{split}
\end{align}
This shows that $p\in E$
is the unique weak solution of \eqref{eq:adjJ2}.
\end{proof}
\subsubsection{Shape derivative of cost functionals}
First of all  we provide the equation for the material derivative. 
In view of \eqref{eq:B(s)}-\eqref{eq:ABE} we have
\begin{align}\label{eq:15}
\begin{split}
\langle  \A(s,\vp),\psi\rangle_{F^*,F} 
&=\int_0^{t_{0}}  \left\langle  \xi(s)   \partial_{t} \vp,\psi_1\right\rangle_{H^{-1}(\Om),H_{0}^{1}(\Om)}  
+ \int_0^{t_{0}} \iom   \M(s,M_s(t))\nabla \vp \cdot \nabla \psi_1  \\
&\quad + \iom \vp(0) \psi_2 \xi(s)
-\int_{0}^{t_{0}} \iom f^{s}\psi_1 \xi(s)
-\iom g^{s}  \psi_2 \xi(s),
\end{split}
\end{align}
where  $\vp\in E$,  $\psi =(\psi_1,\psi_2)\in F$ and $M_s$ is defined in \eqref{eq:mMs}.
In view of \eqref{eq:mMs} we have $\left. \partial_s M_s \right|_{s=0}= (DM) \ta$ where $DM\in\R^{d\times d\times d}$ is a third-order tensor.
Thus, using Theorem \ref{thm:A_parabolic} and Lemma \ref{lem01} we compute 
\begin{align}\label{eq:16}
\begin{split} 
\langle L(u),\psi\rangle_{F^*,F} 
& := \langle  \partial_s \A(0,u),\psi\rangle_{F^*,F} \\
& =\int_0^{t_{0}} \left\langle \partial_t u, \psi_1 \divv(\ta)\right\rangle_{H^{-1}(\Om),H_{0}^{1}(\Om)}
+ \int_0^{t_0}\iom (\M'(0,M) + (DM)\ta)\nabla u \cdot \nabla \psi_1  \\
& \quad + \iom u(0) \psi_2 \divv(\ta)
 - \int_0^{t_{0}} \iom \psi_1\nabla f\cdot \ta + \psi_1 f\divv(\ta) - \iom \psi_2 \nabla g\cdot \ta + \psi_2 g\divv(\ta). 
\end{split}
\end{align}
We have $\partial_t u \in L^{2}(0,T;H^{-1}(\Om))$, $\psi_1  \in L^{2}(0,T;H^{1}_{0}(\Om))$, $\nabla u \in L^{2}(0,T; L^{2}(\Om))$,  $\nabla \psi_1 \in L^{2}(0,T; L^{2}(\Om))$, $u(0)\in L^2(\Om)$ and $\psi_2  \in L^{2}(\Om)$; hence we get indeed $L(u)\in F^*$.
Then,  the equation for the material derivative $\dot{u}\in E$ is
$$\langle A \dot{u},\psi\rangle_{F^*,F} = - \langle L(u),\psi\rangle_{F^*,F}\qquad \text{ for all }\psi\in F,$$
which corresponds to
\begin{equation*}
\int_{0}^{t_{0}}\left\langle \partial_{t}\dot{u},\psi_1\right\rangle _{H^{-1}(\Om),H_{0}^{1}(\Om)}dt
+\int_{0}^{t_{0}}\int_{\Om}M\nabla\dot{u}\cdot\nabla \psi_1
+\int_{\Om}\dot{u}(0)\psi_2= - \langle L(u),\psi\rangle_{F^*,F}\text{ for all }\psi\in F.
\end{equation*}

The partial derivative of $\B$ given by \eqref{eq:bsvp-2} with respect to $s$ at $(0,u)$ exists thanks to Lemma \ref{lem01} and $u_{d}\in L^{2}(0,t_{0};H^1(\mathcal{D}))$.
Also, the partial derivative of $\B$ given by \eqref{eq:bsvp-3} with respect to $s$ at $(0,u)$ exists thanks to Lemma \ref{lem01} and $u_{d}\in H^1(\mathcal{D})$.
Consequently, using the result of Theorem \ref{thm:A_parabolic}, we can apply Theorem \ref{thm1} and we obtain
$$ dJ_1(\Om)(\ta) = \langle L(u),p \rangle_{F^*,F} + \partial_s \B(0,u),$$
which yields, in view of \eqref{eq:bsvp-2} and the fact that $p\in E$ as shown in Section \ref{sec:adj_J1},
\begin{align*} 
dJ_1(\Om)(\ta) & = \int_0^{t_{0}} \left\langle   \partial_{t} u, \divv(\ta)   p \right\rangle_{H^{-1}(\Om),H_{0}^{1}(\Om)}  
+\int_0^{t_{0}} \iom  (\M'(0,M) + (DM)\ta)\nabla u \cdot \nabla p  \\
&\quad + \iom u(0) p(0) \divv(\ta)  - \int_0^{t_{0}} \iom p\nabla f\cdot \ta + pf\divv(\ta) 
- \iom p(0)\nabla g\cdot \ta + p(0)g\divv(\ta) \\
&\quad +\int_0^{t_{0}} \iom -(u - u_d)\nabla u_d\cdot \ta + \frac{1}{2}(u - u_d)^2 \divv(\ta).
\end{align*}
Gathering these informations and using also that $u(0) =g$, we obtain the following result.
\begin{proposition}\label{prop:3}
Let $\Om\in\mathds{P}(\mathcal{D})$ be  of class  $\C^{2}$, $\ta\in \C^2_c(\hold,\Rd)$, $f\in L^{2}(0,t_{0};H^{1}(\mathcal{D}))$, $g\in H^1(\mathcal{D})$, $u_{d}\in L^{2}(0,t_{0};H^1(\mathcal{D}))$, and $M\in\C^{1}([0,t_{0}]\times\mathcal{D},\R^{d\times d})$ 
be symmetric and uniformly positive definite.
Then, the shape derivative of the cost functional \eqref{cost1_parabolic} subject to the constraint \eqref{eq:parabolicmain} is given by
\begin{align} \label{eq:sder_J1}
dJ_1(\Om)(\ta) & =  \int_0^{t_{0}} \left\langle   \partial_{t} u, \divv(\ta)   p \right\rangle_{H^{-1}(\Om),H_{0}^{1}(\Om)} +\iom S_0\cdot \ta + S_1 : D\ta ,
\end{align}
with $S_0\in L^1(\Om, \R^{d})$, $S_1\in L^1(\Om, \R^{d\times d})$  defined as
\begin{align*}
S_0 & = -p(0)\nabla g +  \int_0^{t_{0}} DM^\transp \nabla u \nabla p - (u-u_d)\nabla u_d -p\nabla f ,\\
S_1 & = 
\int_0^{t_{0}} -\nabla p\otimes M\nabla u  
-  \nabla u\otimes M^\transp\nabla p 
+ \left[M\nabla u\cdot \nabla p 
+ \frac{1}{2}(u - u_d)^2  -  pf \right] I_d,  
\end{align*}
and the adjoint $p$ is the solution of \eqref{eq:adjJ1}.
\end{proposition}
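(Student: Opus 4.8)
The statement is a bookkeeping consequence of Theorem~\ref{thm1} specialized to the parabolic setting, so the plan is to start from the explicit formula for $dJ_1(\Om)(\ta)$ displayed immediately before the proposition and to recast every term into the canonical form $S_0\cdot\ta + S_1:D\ta$. That formula is legitimate precisely because Theorem~\ref{thm:A_parabolic} gives that $A=\partial_\vp\A(0,u)$ is an isomorphism, Lemma~\ref{lem:04} provides the adjoint $p\in E$ with $q=p(0)$, and $\partial_s\B(0,u)$ is read off from \eqref{eq:bsvp-2}. I would leave the single term $\int_0^{t_0}\langle\partial_t u,\divv(\ta)p\rangle_{H^{-1}(\Om),H_0^1(\Om)}$ untouched, since $\partial_t u$ lives only in $H^{-1}(\Om)$ and cannot be paired against an $L^1$ tensor; this is exactly the isolated term kept in \eqref{eq:sder_J1}.

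For the volume integrals carrying $M$ I would substitute $\M'(0,M)=\divv(\ta)M - D\ta M - M D\ta^\transp$ from Lemma~\ref{lem01} and apply the identities of Lemma~\ref{lemma_tensor}, namely $\Sb:(a\otimes b)=a\cdot\Sb b$ and $\Sb:(a\otimes b)=\Sb^\transp:(b\otimes a)$, together with $\divv(\ta)=I_d:D\ta$ and the symmetry $M^\transp=M$, to rewrite $\M'(0,M)\nabla u\cdot\nabla p$ as $\big[-\nabla p\otimes M\nabla u - \nabla u\otimes M^\transp\nabla p + (M\nabla u\cdot\nabla p)I_d\big]:D\ta$. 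This yields the full non-scalar part of $S_1$ and the first entry of its scalar bracket. The term $(DM)\ta\,\nabla u\cdot\nabla p$, where $(DM)\ta=\partial_sM_s|_{s=0}$ is the directional derivative of the coefficient, is of pure $\ta$-type: using Definition~\ref{def:transpose_third} and the third-order contraction rules it becomes $(DM^\transp\nabla u\nabla p)\cdot\ta$ and feeds the corresponding entry of $S_0$.

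The remaining source terms I would expand by the product rule $\divv(f\ta)=\nabla f\cdot\ta+f\divv(\ta)$ and $\divv(g\ta)=\nabla g\cdot\ta + g\divv(\ta)$, again writing $\divv(\ta)=I_d:D\ta$; this sends $-p\nabla f$ and $-(u-u_d)\nabla u_d$ into $S_0$ and $-pf$ and $\frac{1}{2}(u-u_d)^2$ into the scalar bracket of $S_1$. A point deserving care is the collection of the initial-time contributions $u(0)p(0)\divv(\ta)$, $-p(0)\nabla g\cdot\ta$ and $-p(0)g\divv(\ta)$: substituting $u(0)=g$ makes the two $\divv(\ta)$ contributions cancel, leaving exactly $-p(0)\nabla g\cdot\ta$, which is the time-independent part of $S_0$. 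Gathering all coefficients of $\ta$ and of $D\ta$ then produces the stated $S_0$ and $S_1$.

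The main obstacle is not the tensor algebra but the integrability claim $S_0\in L^1(\Om,\R^d)$ and $S_1\in L^1(\Om,\R^{d\times d})$. Each genuinely quadratic entry, such as $\nabla p\otimes M\nabla u$ or $M\nabla u\cdot\nabla p$, is a product of two factors lying only in $L^2(0,t_0;L^2(\Om))$, hence is merely $L^1$ in space-time; one must integrate in time first and invoke Cauchy--Schwarz, $\int_0^{t_0}\|\nabla p\,\nabla u\|_{L^1(\Om)}\le\|\nabla p\|_{L^2(0,t_0;L^2(\Om))}\|\nabla u\|_{L^2(0,t_0;L^2(\Om))}<\infty$, to land in $L^1(\Om)$. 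The same argument, using that $E\subset\C([0,t_0],L^2(\Om))$ so $p(0),u(0)\in L^2(\Om)$, together with $g\in H^1(\mathcal{D})$, $f,u_d\in L^2(0,t_0;H^1(\mathcal{D}))$ and $M\in\C^1$, handles the linear entries and the initial-time term, completing the verification.
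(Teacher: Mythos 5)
Your proposal is correct and matches the paper's own proof essentially step for step: both start from the expression for $dJ_1(\Om)(\ta)$ furnished by Theorem~\ref{thm1} together with Theorem~\ref{thm:A_parabolic} and Lemma~\ref{lem:04}, keep the $\partial_t u$ pairing isolated, rewrite $\mathcal{M}'(0,M)\nabla u\cdot\nabla p$ and $(DM)\ta\,\nabla u\cdot\nabla p$ using Lemma~\ref{lem01}, Lemma~\ref{lemma_tensor}, $\divv\ta=I_d:D\ta$ and Definition~\ref{def:transpose_third}, and invoke $u(0)=g$ to cancel the initial-time divergence terms. Your only additions---spelling out the Cauchy--Schwarz/Fubini argument for the $L^1(\Om)$ regularity of $S_0$ and $S_1$, which the paper dispatches as an ``immediate consequence'' of $u,p\in E$ and the data assumptions---are refinements of steps the paper treats implicitly, not a different method.
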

\begin{proof}
Using Definition \ref{def:transpose_third} of the transpose for third-order tensors, we obtain
$$(DM)\ta \nabla u \cdot \nabla p =  DM^\transp \nabla u \nabla p \cdot \ta.$$
Then, using Lemma \ref{lem01}, the tensor relations of Lemma \ref{lemma_tensor} and $\divv\ta = I_d : D\ta$ we get
\begin{align*}
 \mathcal{M}'(0,M)\nabla u \cdot \nabla p
 &= 
  \divv(\ta) M \nabla u \cdot \nabla p 
  -D\ta M \nabla u \cdot \nabla p
  - M D\ta^\transp \nabla u \cdot \nabla p \\
&  = 
  [-\nabla p\otimes M\nabla u  
-  \nabla u\otimes M^\transp\nabla p ]: D\ta
+ \left[M\nabla u\cdot \nabla p  \right] I_d : D\ta.
\end{align*}
Proceeding in a similar way for the other terms in $dJ_1(\Om)(\ta)$, we obtain \eqref{eq:sder_J1}.
The regularity of $S_1$ and $S_0$ is an immediate consequence of $u,p\in E$ and the regularity assumptions on $M,f,g$ and $u_d$.
\end{proof}
In a similar way we compute
\begin{align*} 
dJ_2(\Om)(\ta) & = \int_0^{t_{0}} \left\langle   \partial_{t} u, \divv(\ta)   p \right\rangle_{H^{-1}(\Om),H_{0}^{1}(\Om)}  
+\int_0^{t_{0}} \iom (\M'(0,M) + DM\ta)\nabla u \cdot \nabla p \\
&\quad + \iom u(0) p(0) \divv(\ta)   
- \int_0^{t_{0}} \iom p\nabla f\cdot \ta + pf\divv(\ta) 
- \iom p(0)\nabla g\cdot \ta + p(0)g\divv(\ta) \\
&\quad + \iom - (u(t_{0}) - u_d)\nabla u_d\cdot \ta + \frac{1}{2}(u(t_{0}) - u_d)^2 \divv(\ta).
\end{align*}
Using $u(0)=g$, this yields the following result, whose proof is similar to the proof of Proposition \ref{prop:3}.
\begin{proposition}
Let $\Om\in\mathds{P}(\mathcal{D})$ be  of class  $\C^{2}$, $\ta\in \C^2_c(\hold,\Rd)$, $f\in L^{2}(0,t_{0};H^{1}(\mathcal{D}))$, $g\in H^1(\mathcal{D})$, $u_{d}\in H^1(\mathcal{D})$, and $M\in\C^{1}([0,t_{0}]\times\mathcal{D},\R^{d\times d})$ 
be symmetric and uniformly positive definite.
Then, the distributed shape derivative of the cost functional \eqref{cost2_parabolic} subject to the constraint \eqref{eq:parabolicmain} is given by
\begin{align} \label{eq:sder_J2}
dJ_2(\Om)(\ta) & =  \int_0^{t_{0}} \left\langle   \partial_{t} u, \divv(\ta)   p \right\rangle_{H^{-1}(\Om),H_{0}^{1}(\Om)} +\iom S_0\cdot \ta + S_1 : D\ta,
\end{align}
with $S_0\in L^1(\Om, \R^{d})$, $S_1\in L^1(\Om, \R^{d\times d})$, defined as
\begin{align*}
S_0 & = -p(0)\nabla g  - (u(t_0) - u_d)\nabla u_d +  \int_0^{t_{0}} DM^\transp \nabla u \nabla p  -p\nabla f ,\\
S_1 & = \frac{1}{2}(u(t_0) - u_d)^2 I_d 
+  \int_0^{t_{0}} -\nabla p\otimes M\nabla u  
-  \nabla u\otimes M^\transp\nabla p 
+ \left[M\nabla u\cdot \nabla p 
  -  pf \right] I_d,  
\end{align*}
and the adjoint $p$ is the solution of \eqref{eq:adjJ2}.
\end{proposition}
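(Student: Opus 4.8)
The plan is to mimic verbatim the strategy of the proof of Proposition \ref{prop:3}, starting from the expression for $dJ_2(\Om)(\ta)$ displayed immediately above the statement and recasting each summand into the tensorial form $S_0\cdot\ta + S_1 : D\ta$. The only structural difference with respect to $J_1$ is that the tracking term is now evaluated at the terminal time $t_0$, so it is \emph{not} under the time integral: it produces the contributions $-(u(t_0)-u_d)\nabla u_d\cdot\ta$ and $\tfrac12(u(t_0)-u_d)^2\divv(\ta)$, which will feed into $S_0$ and (via $\divv\ta = I_d : D\ta$) into the $I_d$-part of $S_1$ respectively. The duality-pairing term $\int_0^{t_0}\langle\partial_t u,\divv(\ta)\,p\rangle_{H^{-1}(\Om),H_0^1(\Om)}$ is kept as is in the final formula, since $\partial_t u$ is only in $L^2(0,t_0;H^{-1}(\Om))$ and cannot be written as an $L^1$ density paired against $\ta$ or $D\ta$; everything else is converted.

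First I would treat the bulk elliptic term. Using $\M'(0,M) = \divv(\ta)M - D\ta M - M D\ta^\transp$ from Lemma \ref{lem01}, together with the identities of Lemma \ref{lemma_tensor} and $\divv\ta = I_d : D\ta$, I rewrite
\[
\M'(0,M)\nabla u\cdot\nabla p = [-\nabla p\otimes M\nabla u - \nabla u\otimes M^\transp\nabla p] : D\ta + [M\nabla u\cdot\nabla p]\, I_d : D\ta,
\]
exactly as in Proposition \ref{prop:3}, so after integration in time these terms land in $S_1$. For the third-order term I invoke Definition \ref{def:transpose_third} to move the vector field out of the tensor, namely $(DM)\ta\,\nabla u\cdot\nabla p = DM^\transp\nabla u\,\nabla p\cdot\ta$, which contributes the time-integrated $\int_0^{t_0} DM^\transp\nabla u\,\nabla p$ to $S_0$.

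Next I would collect the remaining first-order terms $-p\nabla f\cdot\ta$, $-p(0)\nabla g\cdot\ta$ and $-(u(t_0)-u_d)\nabla u_d\cdot\ta$ directly into $S_0$, and convert every factor $\divv(\ta)$ into $I_d : D\ta$ so that $\tfrac12(u(t_0)-u_d)^2\divv(\ta)$ and $-pf\divv(\ta)$ land in $S_1$. Using $u(0)=g$, the terms $\iom u(0)p(0)\divv(\ta)$ and $-\iom p(0)g\divv(\ta)$ cancel, exactly as in Proposition \ref{prop:3}, and one reads off precisely the stated $S_0$ and $S_1$. The final point is the regularity $S_0,S_1\in L^1(\Om)$, which follows from $u,p\in E\subset\C([0,t_0],L^2(\Om))$, from $u(t_0)\in L^2(\Om)$ and $u_d\in H^1(\mathcal{D})$, and from the assumed regularity of $M,f,g$, just as at the end of the proof of Proposition \ref{prop:3}. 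The main obstacle is not conceptual but purely the careful bookkeeping of the terminal-time contributions, since they are the only terms whose handling departs from the already-established case of $J_1$.
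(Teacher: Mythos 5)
Your proposal is correct and coincides with the paper's own argument: the paper's proof consists precisely of deriving the displayed expression for $dJ_2(\Om)(\ta)$, invoking $u(0)=g$ to cancel the $\iom u(0)p(0)\divv(\ta)$ and $-\iom p(0)g\,\divv(\ta)$ terms, and then declaring the rest ``similar to the proof of Proposition \ref{prop:3}'', which is exactly the tensor bookkeeping you carry out (Definition \ref{def:transpose_third} for the $(DM)\ta$ term, the $\M'(0,M)$ identity from Lemmas \ref{lem01} and \ref{lemma_tensor}, $\divv\ta = I_d : D\ta$, and the same regularity conclusion from $u,p\in E$ and the assumptions on $M,f,g,u_d$). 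Your observation that the only structural novelty is the terminal-time tracking term, which sits outside the time integral and feeds $-(u(t_0)-u_d)\nabla u_d$ into $S_0$ and $\tfrac12(u(t_0)-u_d)^2 I_d$ into $S_1$, is exactly the point the paper leaves implicit.
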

\begin{remark}
In \cite[Section 3.4]{MR1215733} the material and shape derivatives of the solutions to parabolic problems with Neumann and Dirichlet conditions were investigated in the particular case $M=I_d$. 
The results of this section generalize the results of  \cite[Section 3.4.2]{MR1215733} and give a new perspective on the duality between the material derivative $\dot{u}$ and the adjoint $p$ in the parabolic case.
\end{remark}

\section{Comparison with other methods}\label{sec:aam}
\subsection{Comparison with the averaged adjoint method}
In this section we study the relation between Theorem~\ref{thm1} and the averaged adjoint method (AAM) introduced in \cite{MR3374631}; see also  \cite{MR3609755,MR3711067,2020arXiv200509011G,MR3535238,2018arXiv180300304S} for variations and extensions of the AAM. 
Let $\Om\in\mathds{P}(\mathcal{D})$, $\ta\in \C^{0,1}_c(\hold,\Rd)$, the associated flow
$\Tt:\hold\rightarrow \hold$ and  $\Om_{s}:=T_{s}(\Om)$ be defined as in Section \ref{section1a}.
Suppose that $E,F$ are Banach spaces with $F$ reflexive, that $\G$ has the form \eqref{G_lag} and that  
 $\A\in \C^1([0,s_1]\times E, F^*)$.
Recall that the shape functional is defined as 
$$ J(\Om_s) = \G(s,u^s,\psi) =  \langle  \A(s,u^s),\psi\rangle_{F^*,F} + \B(s,u^s) \text { for all }\psi\in F.$$
Although  the AAM is formulated with weaker assumptions in \cite{MR3374631},   we recall here a more compact version, adapted from \cite{MR3535238}, which is still quite general and easier to  compare with our approach. 
\begin{theorem}[Averaged adjoint method]\label{thm:AAE}
Assume that  for every $(s,\psi)\in [0,s_1] \times F$ we have that
 \begin{enumerate} 
\item[(H1)] the mapping $[0,1]\ni \eta\mapsto \G(s,\eta u^s +(1-\eta) u^0 ,\psi)$ is absolutely continuous;
\item[(H2)] the mapping $[0,1]\ni \eta\mapsto \langle\partial_\vp \G(s,\eta u^s +(1-\eta) u^0 ,\psi),\hat\vp\rangle_{E^*,E}$ belongs to $L^1(0,1)$ for every $\hat\vp\in E$;
\item[(H3)] there exists a unique averaged adjoint $p^s \in F$ solution of the averaged adjoint equation
\begin{equation}\label{averated_adj}
\int_0^1 \left\langle\partial_\vp \G(s,\eta u^s +(1-\eta) u^0,p^s),\hat\vp\right\rangle_{E^*,E}\, d\eta =0 \quad \forall \hat\vp\in E;
\end{equation}
\item[(H4)] we have
\begin{equation}\label{eq:DifferenceQuotientAvAdj}
 \lim_{s\searrow 0} \frac{\G(s, u^0 ,p^s)-\G(0, u^0 ,p^s)}{s}=\partial_s \G(0, u^0 , p^0). 
\end{equation}
\end{enumerate}
Then $J$ is shape-differentiable  and it holds that
 \begin{equation*}
 dJ(\Om)(\VV) = \partial_s \G(0, u^0 ,p^0).
 \end{equation*}
\end{theorem}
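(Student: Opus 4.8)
The plan is to exploit the defining identity $J(\Om_s)=\G(s,u^s,\psi)$, which holds for \emph{every} $\psi\in F$ because $\A(s,u^s)=0$; this is precisely the feature of the Lagrangian formulation that lets us avoid any difference quotient of the state. Evaluating this identity at the parameter $s$ and at $0$ with the \emph{same} test function $\psi=p^s$, and recalling $\Om_0=\Om$, gives
$$ J(\Om_s)-J(\Om) = \G(s,u^s,p^s) - \G(0,u^0,p^s). $$
I would then insert the intermediate quantity $\G(s,u^0,p^s)$ to split this as
$$ J(\Om_s)-J(\Om) = \big[\G(s,u^s,p^s)-\G(s,u^0,p^s)\big] + \big[\G(s,u^0,p^s)-\G(0,u^0,p^s)\big]. $$

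The first step is to show that the first bracket vanishes. By (H1) the map $\eta\mapsto \G(s,\eta u^s+(1-\eta)u^0,p^s)$ is absolutely continuous on $[0,1]$, hence equal to the integral of its almost-everywhere derivative; by the chain rule that derivative is $\langle \partial_\vp\G(s,\eta u^s+(1-\eta)u^0,p^s),u^s-u^0\rangle_{E^*,E}$, which belongs to $L^1(0,1)$ by (H2). Therefore
$$ \G(s,u^s,p^s)-\G(s,u^0,p^s) = \int_0^1 \langle \partial_\vp\G(s,\eta u^s+(1-\eta)u^0,p^s),u^s-u^0\rangle_{E^*,E}\, d\eta. $$
Since $u^s-u^0\in E$ is an admissible test function, the averaged adjoint equation \eqref{averated_adj} of (H3), applied with $\hvp=u^s-u^0$, shows that this integral is exactly zero.

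It then remains that $J(\Om_s)-J(\Om)=\G(s,u^0,p^s)-\G(0,u^0,p^s)$. Dividing by $s$ and letting $s\searrow 0$, hypothesis (H4) shows that the Eulerian semiderivative exists and
$$ dJ(\Om)(\ta)=\lim_{s\searrow 0}\frac{\G(s,u^0,p^s)-\G(0,u^0,p^s)}{s}=\partial_s\G(0,u^0,p^0). $$
To conclude shape-differentiability in the sense of Definition \ref{def1}, I would note that $\partial_s\G(0,u^0,p^0)=\langle \partial_s\A(0,u^0),p^0\rangle_{F^*,F}+\partial_s\B(0,u^0)$, as in \eqref{dsG}, and that here $p^0$ solves the (segment-degenerate) adjoint equation at $s=0$ and is thus independent of $\ta$, while the $s$-derivatives of $\A$ and $\B$ depend linearly and continuously on $\ta$ through the flow $T_s$; this gives linearity and continuity of $\ta\mapsto dJ(\Om)(\ta)$.

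The heart of the argument, and its only genuinely delicate point, is the vanishing of the first bracket: one must justify replacing the a.e. derivative of the absolutely continuous path by the Gateaux directional derivative, and one must verify that $u^s-u^0$ is a legitimate test in \eqref{averated_adj}. Hypothesis (H4) is precisely what avoids the hardest estimate — controlling $p^s-p^0$ as $s\searrow0$ — which is the difficulty the averaged adjoint construction is designed to sidestep; in Theorem \ref{thm1} the analogous role is played instead by the differentiability of $s\mapsto u^s$ furnished by the implicit function theorem.
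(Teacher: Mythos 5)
Your proof is correct, and it is essentially the canonical argument: the paper itself states Theorem \ref{thm:AAE} \emph{without proof}, recalling it from \cite{MR3374631,MR3535238}, and your reconstruction --- the identity $J(\Om_s)=\G(s,u^s,\psi)$ valid for all $\psi\in F$ because $\A(s,u^s)=0$, the telescoping split through the intermediate value $\G(s,u^0,p^s)$, the vanishing of the first bracket via (H1)--(H3) tested with $\hvp=u^s-u^0\in E$, and the passage to the limit in the second bracket via (H4) --- is precisely the proof given in those references. The one delicate point is the one you flag yourself: identifying the a.e.\ derivative of the absolutely continuous path $\eta\mapsto\G(s,\eta u^s+(1-\eta)u^0,p^s)$ with the directional derivative $\langle\partial_\vp\G(s,\eta u^s+(1-\eta)u^0,p^s),u^s-u^0\rangle_{E^*,E}$ requires $\partial_\vp\G$ to exist along the segment, which is implicit in the way (H2)--(H3) are formulated, so your argument is complete under the intended reading of the hypotheses. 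Note also that, strictly speaking, (H1)--(H4) are stated for the flow of a fixed $\ta$, so they only yield the Eulerian semiderivative; the linearity and continuity in $\ta$ required by Definition \ref{def1} must be read off from the concrete structure of $\partial_s\A$ and $\partial_s\B$, exactly as you indicate in your closing remark.
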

On the one hand, we observe that conditions (H1) and (H2) are stronger with respect to $\B$ than the conditions of Theorem \ref{thm1}, as they also implicitly require the absolute continuity of $[0,1]\ni \eta\mapsto \B(s,\eta u^s +(1-\eta) u^0)$ and the integrability of $[0,1]\ni \eta\mapsto \partial_\vp \B(s,\eta u^s +(1-\eta) u^0 ;\hat\vp)$.
On the other hand, the conditions of Theorem~\ref{thm1} are  stronger with respect to $\A$, as Theorem~\ref{thm1} requires $\A\in \C^{1}(\left[0,s_1\right]\times E,F^{*})$ whereas condition (H1) implicitely requires the absolute continuity of $[0,1]\ni \eta\mapsto \langle  \A(s,\eta u^s +(1-\eta) u^0),\psi\rangle_{F^*,F}$.
In any case, conditions (H1) and (H2) are usually easily verified in practice, so that  (H3) and (H4) are in fact the interesting conditions to compare with the conditions of Theorem~\ref{thm1}. 

For the sake of comparison we assume that the conditions of Theorem~\ref{thm1} and (H1)-(H3) of Theorem~\ref{thm:AAE} are satisfied.
Then, the averaged adjoint $p^s$ is the solution to
\begin{align}\label{aadjoint}
 \int_0^1  \langle  \partial_{\vp}\A(s, \eta u^s + (1-\eta) u^0)(\hat\vp), p^s \rangle_{F^*,F} \, d\eta 
 = - \int_0^1 \langle\partial_{\vp} \B(s,\eta u^s + (1-\eta) u^0),\hat\vp\rangle_{E^*,E}\, d\eta, \quad \forall \hat\vp\in E. 
\end{align}
Note that for $s=0$, the averaged adjoint $p^0$ is the solution to
\begin{equation}\label{eq:34}
\langle  \partial_{\vp}\A(0,u^0)(\hvp),p^0\rangle_{F^*,F} = - \langle\partial_\vp \B(0,u^0),\hvp\rangle_{E^*,E} \text{ for all }\hvp\in E,  
\end{equation}
therefore it coincides with the adjoint $p$ defined in \eqref{adj_eq}.

Now let us check  condition  (H4) of Theorem \ref{thm:AAE}. 
We compute
\begin{align*}
\lim_{s\searrow 0}\frac{\G(s,u^0,p^s) - \G(0,u^0,p^s)}{s}
& = 
\lim_{s\searrow 0}\frac{ \langle  \A(s,u^0), p^s\rangle_{F^*,F} -  \langle  \A(0,u^0), p^s\rangle_{F^*,F}}{s}
+ \lim_{s\searrow 0}\frac{\B(s,u^0) - \B(0,u^0)}{s}\\
& =\lim_{s\searrow  0}
\langle\frac{ \A(s,u^0) - \A(0,u^0)}{s},p^s\rangle_{F^*,F}
 + \partial_s \B(0,u^0).
\end{align*}
In practice, this limit is often computed by proving the strong convergence $(\A(s,u^0) - \A(0,u^0))/s\to\partial_s \A(0,u^0)$ in $F^*$, and  the weak convergence $p^s\rightharpoonup p^0$ in $F$, see for instance \cite{MR3609755,MR3436555, MR3374631}, which yields
\begin{align}\label{eq:limGs}
\lim_{s\searrow 0}\frac{\G(s,u^0,p^s) - \G(0,u^0,p^s)}{s}
& = \langle \partial_s \A(0,u^0),p^0\rangle_{F^*,F}
+ \partial_s \B(0,u^0)
= \partial_s \G(0,u^0,p^0),
\end{align}
and in  this case Theorem~\ref{thm1} can also be used.

However, the limit \eqref{eq:limGs} can also be obtained by proving the weak convergence $(\A(s,u^0) - \A(0,u^0))/s\rightharpoonup\partial_s \A(0,u^0)$ in $F^*$ and the strong convergence  $p^s\to p^0$ in $F$, and in this case Theorem~\ref{thm1} cannot be used.
A simple example illustrating this approach is presented in Section \ref{sec:kun}.

We gather from this comparison that when $\A\in \C^1([0,s_1]\times E, F^*)$, which usually  is a consequence of having sufficiently regular data, Theorem \ref{thm1} can be applied and  there is no need to introduce and study the averaged adjoint and to prove the weak convergence $p^s\rightharpoonup p^0$, which sometimes leads to lengthy proofs, particularly when $\A(s,\vp)$ is non-linear in $\vp$; see for instance \cite{MR3436555, LWY, MR3374631}. 
The other advantage is that the relations between the adjoint, the material derivative and the shape derivative of the cost functional appear clearly when using Theorem \ref{thm1}.
However, Theorem \ref{thm1} cannot be applied when $\A$ is less regular and is only weakly differentiable with respect to $s$, whereas the AAM can be applied in this case.
Thus, we conclude that the AAM is a very versatile approach which is well-suited for singular situations, as shown in \cite{MR3609755}, but for smoother scenarios the approach based on Theorem \ref{thm1} and the implicit function theorem is more straightforward.
We also mention  that the AAM was extended recently to even more singular frameworks such as nonsmooth cost functions \cite{MR3584578} and topological derivatives \cite{2020arXiv200509011G,2018arXiv180300304S}.

\subsection{Comparison with a variational approach to shape derivatives}\label{sec:kun}
It is interesting to revisit a counter-example that was presented in \cite[Section 3.5]{MR2434064}, where $\Om$ is assumed to be in $\R^3$ and of class $\C^{2,1}$.
We investigate how this simple example fares with Theorem \ref{thm1} and with the AAM.
Our setting is $\Om\in\mathds{P}(\mathcal{D})$, $E=F=H^1_0(\Om)$,  $E^*=F^*=H^{-1}(\Om)$, $\ta\in \C^1_c(\hold,\Rd)$, $f\in L^2(\hold)$, $f^s:=f\circ\Tt$. Define
\begin{align*}
\langle\mathcal{A}(s,\varphi),\psi\rangle_{F^{*},F} & :=\int_{\Omega}\mathcal{M}(s,I_{d})\nabla\varphi\cdot\nabla\psi-f^{s}\psi\xi(s),\\
\mathcal{B}(s,\varphi) & :=\int_{\Omega}\mathcal{M}(s,I_{d})\nabla\varphi\cdot\nabla\varphi.
\end{align*}
Here $\mathcal{A}$ is associated with the Poisson problem with Dirichlet conditions and $\mathcal{B}$ with the cost function
\[
J(\Omega)=\int_{\Omega}|\nabla u|^{2}.
\]
Then it is clear that 
$$\langle A\hat{\vp},\psi\rangle_{F^*,F}
:= \langle \partial_\vp\A(0,u)(\hat\vp ),\psi \rangle_{F^*,F} = 
\int_\Om \nabla \hvp\cdot \nabla\psi,$$ 
and  $A: E\to F^*$ is an isomorphism.
In this case we have $A = A^*$  so we immediatly get the following adjoint equation
$$\langle A^*p,\hvp\rangle_{E^*,E} = - \langle \bl(u),\hvp\rangle_{E^*,E}\quad \text{ for all }\hvp\in E,$$
which is
$$ \int_\Om \nabla \hvp\cdot \nabla p =  - 2\int_\Om  \nabla u\cdot\nabla\hvp\quad  \text{ for all }\hvp\in H^1_0(\Om).$$
This implies $p=-2u$.

First of all when $f$ is only in $L^2(\hold)$, then using \cite[Proposition 2.39]{MR1215733} we have that $s\mapsto f^s$ is weakly differentiable in $H^{-1}(\hold)$, but $s\mapsto f^s$ is not strongly differentiable in $H^{-1}(\hold)$; see the  counter-example in \cite[p. 73]{MR1215733}.
So in this case we cannot show that $\A\in \C^{1}([0,s_1]\times E,F^{*})$ and  Theorem \ref{thm1} cannot be applied.

Now if $f\in W^{1,q}(\hold)$ with $q>1$ and $d=2$, then $s\mapsto f^s$ is strongly differentiable in $L^q(\hold)$.
Let us define $F(s) \in H^{-1}(\hold)$ as
$$ F(s): H^1_0(\hold)\ni \psi \mapsto  \int_\hold \left(\frac{f^s \xi(s) - f}{s}  - \divv(f\theta)\right)  \psi .$$
Due to the Sobolev imbedding $H^1_0(\hold)\subset L^{q'}(\hold)$ for all $1\leq q'< \infty$ in two dimensions, we have $\psi\in L^{q'}$  for all $1\leq q'< \infty$.
Using H\"older's inequality, with $1/q + 1/q' = 1$, and  the fact that $s\mapsto f^s$ is strongly differentiable in $L^q(\hold)$ for $q>1$ we obtain
\begin{align*}
\| F(s) \|_ {H^{-1}(\hold)} 
& = \sup_{\|\psi\|_{ H^1_0(\hold)}=1}\left|  \int_\hold \left(\frac{f^s \xi(s) - f}{s}  - \divv(f\theta)\right)  \psi \right| \\
& \leq \underbrace{\sup_{\|\psi\|_{ H^1_0(\hold)}=1}\left( \int_\hold |\psi|^{q'}  \right)^{1/q'}}_{\leq C}
\underbrace{\left( \int_\hold\left|\frac{f^s \xi(s) - f}{s}  - \divv(f\theta)\right|^q   \right)^{1/q}}_{\to 0 \text{ as } s\to 0},
\end{align*}
where $C>0$ is the norm of the inclusion $H^1_0(\hold)\hookrightarrow L^{q'}(\hold)$.
Hence $\lim_{s\to 0}\| F(s) \|_ {H^{-1}(\hold)} = 0$ and we can show that  $\A\in \C^{1}([0,s_1]\times E,F^{*})$ in a similar way.
Thus, in this case  Theorem \ref{thm1} can be applied and  $u$ has a material derivative  $\dot u\in H^1_0(\Om)$ which is the unique solution of 
$$\langle A\dot u,\psi\rangle_{F^*,F} = -\langle L(u),\psi\rangle_{F^*,F}\quad \text{ for all }\psi\in F ,$$
which means 
$$ \int_\Om \nabla \dot u\cdot \nabla \psi 
=  - \int_\Om \M'(0,I_d)\nabla u\cdot \nabla \psi - \divv(f\ta) \psi   \quad \text{ for all }\psi\in H^1_0(\Om).$$

Now if $f\in W^{1,q}(\hold)$ with $q>1$ and $d=3$, then $s\mapsto f^s$ is strongly differentiable in $L^q(\hold)$ but we only have  $\psi\in L^{q'}$  for all  $1\leq q'\leq 6$.
Due to $1/q + 1/q' = 1$, this implies that we can prove $\lim_{s\to 0}\| F(s) \|_ {H^{-1}(\hold)} = 0$ and  $\A\in \C^{1}([0,s_1]\times E,F^{*})$ only for $q\geq 6/5$.
This shows that, for $f\in W^{1,q}(\hold)$ with $q\in (1,6/5)$ in three dimensions, we cannot apply Theorem~\ref{thm1} due to the condition $\A\in \C^{1}([0,s_1]\times E,F^{*})$. 

We actually obtain the same restriction $q\geq 6/5$ as in the counter-example  of \cite[Section 3.5]{MR2434064}.
However, the perspective in  \cite{MR2434064} is different:
it is shown that the shape derivative of $J(\Om)$ exists and can be computed with the method of \cite{MR2434064} for $f\in W^{1,q}(\hold)$, with $q\in (1,6/5)$, even though  the shape derivative $u'$ defined by $u':=\dot u - \nabla u\cdot \ta$ does not exist (in the sense that  it is not in $H^1(\Om)$).
Our approach shows that the lack of strong differentiability of $s\mapsto f^s$  precludes the application of the implicit function theorem in Theorem~\ref{thm1}, and also the existence of a strong material derivative $\dot u$. 
In fact, in this case it is known that  the weak material derivative $\dot u$ exists, see \cite[Corollary 2.81]{MR1215733}.
This indicates that a central issue to prove the shape differentiability of $J(\Om)$ is the question of the weak or strong  material derivative $\dot u$, rather than the existence of the shape derivative $u'$.

To complete the comparison we discuss how the AAM can be applied to this example.
We assume that $\Om\in\mathds{P}(\mathcal{D})$,  $f\in W^{1,q}(\hold)$ with $q\in (1,6/5)$ and $d=3$.
First of all, it is easy to compute the variational formulation corresponding to the definition  \eqref{aadjoint} of the  averaged adjoint $p^s$:
$$ \int_\Om  \nabla p^s\cdot \nabla\hvp  =  - \iom \M(s,I_d)  \nabla (u^s+u^0)\cdot \nabla \hvp\quad  \text{ for all }\hvp\in H^1_0(\Om).$$
Taking the difference between the above equation and the same equation at $s=0$, and choosing the test function $\hvp = p^s -p^0$ leads to the estimate
\begin{equation}\label{eq:cvps}
 \| \nabla (p^s -p^0) \|_{L^2(\Om)} \leq \| \M(s,I_d)  \nabla (u^s+u^0) - 2\nabla u^0 \|_{L^2(\Om)}. 
\end{equation}

Now we provide a similar estimate for the gradient of $u^s$.
The solution $u^s$ satisfies
$$0 = \int_\Om \M(s,I_d) \nabla u^s\cdot \nabla\psi  -  f^s \psi \xi(s)
=\int_\Om \nabla u^s\cdot \nabla\psi + (\M(s,I_d)  - I_d)\nabla u^s\cdot \nabla\psi -  f^s \psi \xi(s) .$$
Then, taking the difference between the above equation and the same equation at $s=0$, using $\M(0,I_d)=I_d$, and choosing the test function $\psi = u^s -u_0$ we obtain
$$0 =\int_\Om \nabla (u^s -u^0)\cdot \nabla (u^s -u_0) + (\M(s,I_d)  - I_d)\nabla u^s\cdot \nabla (u^s -u_0)-   (u^s -u_0)(f^s\xi(s)-f) .$$
Then, using the Poincaré inequality one obtains
\begin{align*}
 \| \nabla (u^s -u^0) \|^2_{L^2(\Om)} & \leq \| \M(s,I_d)  -I_d \|_{L^\infty(\Om)}  \| \nabla u^s \|_{L^2(\Om)} \| \nabla (u^s -u^0) \|_{L^2(\Om)}  \\
&\quad + C_\Om\| f^s \xi(s) -f \|_{L^2(\Om)} \| \nabla (u^s -u^0) \|_{L^2(\Om)}, 
\end{align*}
where $C_\Om>0$ is the constant coming from the Poincaré inequality.
Finally, dividing by  $\| \nabla (u^s -u^0) \|_{L^2(\Om)}$ we obtain  the estimate:
$$ \| \nabla (u^s -u^0) \|_{L^2(\Om)} \leq \| \M(s,I_d)  -I_d \|_{L^\infty(\Om)}  \| \nabla u^s \|_{L^2(\Om)}  + C_\Om\| f^s \xi(s) -f \|_{L^2(\Om)}.$$
Using the convergences of $\M(s,I_d) $, $\xi(s)$ and $f^s$ and the uniform boundedness of  $\| \nabla u^s \|_{L^2(\Om)}$ we obtain the strong convergence $u^s \to u$ in $H^1_0(\Om)$, which in turn yields the strong convergence $p^s \to p$ in $H^1_0(\Om)$ via \eqref{eq:cvps}.

Then we have
\begin{align*}
\lim_{s\searrow  0} \langle\frac{ \A(s,u^0) - \A(0,u^0)}{s},p^s\rangle_{F^*,F}
&=
\lim_{s\searrow  0} \iom  \frac{(\M(s,I_d)  -I_d)\nabla u^0\cdot \nabla p^s}{s}  - \frac{ (f^s \xi(s) - f) p^s}{s}\\
&= \iom \M'(0,I_d)\nabla u\cdot \nabla p - \divv(\ta) fp \, dx
- \left\langle \nabla f\cdot\ta, p\right\rangle _{H^{-1}(\Om)\times H_{0}^{1}(\Om)}, 
\end{align*}
where we have used the weak differentiability of   $s\mapsto f^s$ in $H^{-1}(\hold)$ and the strong convergence $p^s \to p$ in $H^1_0(\Om)$.
Thus we conclude that in the case $f\in W^{1,q}(\hold)$ with $q\in (1,6/5)$ and $d=3$, the AAM can be applied even though Theorem~\ref{thm1} cannot, at the cost of proving the  strong convergence of the averaged adjoint $p^s \to p$ in $F$.
We also observe that the weak convergence $(\A(s,u^0) - \A(0,u^0))/s\rightharpoonup\partial_s \A(0,u^0)$ in $F^*=H^{-1}(\Om)$ that we have used to apply the AAM is directly related to the existence of  the weak material derivative $\dot u$, which is proven in \cite[Corollary 2.81]{MR1215733}.

Finally, we compute the shape derivative of $J(\Om)$, $\Om\in\mathds{P}(\mathcal{D})$, in the case where Theorem~\ref{thm1} can be applied, i.e. for $f\in W^{1,q}(\hold)$ with $q\geq 6/5$:
$$ dJ(\Om)(\ta) =\langle L(u),p \rangle_{F^*,F} + \partial_s \B(0,u),$$
so we obtain, using $p=-2u$,
$$ dJ(\Om)(\ta) = \int_\Om \M'(0,I_d)\nabla u\cdot \nabla p - \divv(f\ta) p + \M'(0,I_d)\nabla u\cdot \nabla u
= \int_\Om 2 \divv(f\ta) u - \M'(0,I_d)\nabla u\cdot \nabla u .$$
The shape derivative can also be written in tensorial form as
\begin{align*} 
dJ(\Om)(\ta) & = \iom S_0 \cdot \ta + S_1 : D\ta.
\end{align*}
with  $S_0 = 2u\nabla f$ and  $S_1 = 2\nabla u\otimes\nabla u   + (2 fu - |\nabla u|^2) I_d$.
When $\Om$ is of class $\C^1$, using \cite[Proposition 4.3]{MR3535238} we immediately obtain the following boundary expression, also known as Hadamard formula:
\begin{align*} 
dJ(\Om)(\ta) & = \int_{\partial\Om} (S_1 n\cdot n) \ta\cdot n =  \int_{\partial\Om} |\dn u|^2 \ta\cdot n.
\end{align*}
which yields the same formula as in \cite[Section 3.5]{MR2434064}.

We conclude that when applicable, Theorem~\ref{thm1} allows us to quickly obtain the material derivative equation, the adjoint equation,   the distributed shape derivative when $\Om$ has low regularity,  and the corresponding boundary expression when $\Om$ is $\C^1$; compare with the development in \cite[Section 3.5]{MR2434064}.
Nevertheless, the condition $\A\in \C^{1}([0,s_1]\times E,F^{*})$ imposes a restriction on the regularity of the right-hand side $f$.
Also, this issue cannot be resolved by using  the framework of Section \ref{sec:distribution2}, i.e. by using a larger space $E$, since the cost functional cannot depend on $\nabla u$ in Section \ref{sec:distribution2}.
Based on this observation, an interesting direction for further research would be the application of a  weak form of the implicit function theorem in the spirit of  \cite[Theorem 4.1]{MR1452889} or  \cite[Theorem 2.1]{MR3165308}, which does not require $\A\in \C^{1}([0,s_1]\times E,F^{*})$, and would allow to exploit the weak differentiability of  $s\mapsto f^s$ in $H^{-1}(\hold)$.

\vspace{0.5cm}
\noindent 
{\bf Acknowledgments. } Antoine Laurain gratefully acknowledges the support of  the Brazilian National Council for Scientific and Technological Development  (Conselho Nacional de Desenvolvimento Cient\'ifico e Tecnol\'ogico - CNPq) through the process: 408175/2018-4 ``Otimiza\c{c}\~ao de forma n\~ao suave e controle de problemas de fronteira livre'', and through the program  ``Bolsa de Produtividade em Pesquisa - PQ 2018'', process: 304258/2018-0.
Pedro T. P. Lopes was partially supported by grant \#2019/15200-1, São Paulo Research Foundation (FAPESP). Jean C. Nakasato is supported by CAPES - INCTmat grant 465591/2014-0.

\section{Appendix}\label{sec:appendix}
\subsection{Second-order chain rule}
Here we prove the following result
(we also refer to \cite[Lemma 2.62]{MR1215733} for other formulas for transport of differential operators).
\begin{lemma}\label{lemma_second_order_chain}
Let $\psi\in H^2(\hold)$ and $\ta\in \C^2_c(\hold,\Rd)$. Then we have 
$s\mapsto[\Delta(\psi\circ T_s^{-1})]\circ T_s \in\C^1([0,s_1], L^2(\hold))$
and
\begin{align}
\label{der_D2} \partial_s ([D^2(\psi\circ T_s^{-1})]\circ T_s)|_{s=0} 
& = - D\theta^{\transp} D^2\psi -   D^2\psi D\theta 
- D^2\theta^{\transp}\nabla\psi, \\
\label{der_lap}  \partial_s ([\Delta(\psi\circ T_s^{-1})]\circ T_s)|_{s=0} 
& = -2 D^2\psi : D\theta - (\Delta \theta) \cdot \nabla\psi.
\end{align} 
\end{lemma}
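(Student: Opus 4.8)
The plan is to reduce everything to a single transport identity for the Hessian, obtained by iterating the first-order chain rule, and then to differentiate that identity in $s$ using Lemma~\ref{lem01}, finally handling the low regularity of $\psi$ by density. First I would establish the transported-gradient formula $[\nabla(\psi\circ\Tt^{-1})]\circ\Tt = D\Tt^{-\transp}\nabla\psi$, which follows from the chain rule together with $D\Tt^{-1}\circ\Tt = (D\Tt)^{-1}$. Applying the same reasoning to the vector field $w_s := \nabla(\psi\circ\Tt^{-1})$ and using $[Dw_s]\circ\Tt = D(w_s\circ\Tt)\,(D\Tt)^{-1}$ yields the key formula
\begin{equation*}
[D^2(\psi\circ\Tt^{-1})]\circ\Tt = D\!\left(D\Tt^{-\transp}\nabla\psi\right) D\Tt^{-1}.
\end{equation*}
The statement for the Laplacian then follows at once by taking the pointwise trace, since $\Delta = \tr D^2$ commutes with composition by $\Tt$.

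Next I would differentiate this identity at $s=0$. Expanding by the product rule, $D(D\Tt^{-\transp}\nabla\psi) = (D\,D\Tt^{-\transp})\nabla\psi + D\Tt^{-\transp}D^2\psi$, which is legitimate since $D\Tt^{-\transp}\in\C^1(\hold)$ and $\nabla\psi\in H^1$; this isolates the entire $s$-dependence into the smooth coefficients, so that $\partial_s$ acts only on them while $\nabla\psi$ and $D^2\psi$ remain fixed. Using $D\Tt^{-1}|_{s=0}=I_d$, $\partial_s D\Tt^{-1}|_{s=0}=-D\ta$ (hence $\partial_s D\Tt^{-\transp}|_{s=0}=-D\ta^\transp$), and $D(D\Tt^{-\transp})|_{s=0}=0$ from Lemma~\ref{lem01}, the three surviving terms are $-D\ta^\transp D^2\psi$, $-D^2\psi\,D\ta$, and $-D(D\ta^\transp)\nabla\psi$; the last one is $-D^2\ta^\transp\nabla\psi$ in the tensor notation of Definition~\ref{def:transpose_third}, which gives \eqref{der_D2}. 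Taking the trace, using the symmetry of $D^2\psi$ to merge the first two terms into $-2D^2\psi:D\ta$ and the identity $\tr(D^2\ta^\transp\nabla\psi)=(\Delta\ta)\cdot\nabla\psi$, yields \eqref{der_lap}.

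The main obstacle is the low regularity $\psi\in H^2$, which prevents a pointwise application of the classical chain rule. I would resolve this by a density argument: the transport identity and the derivative formulas hold classically for $\psi\in\C^\infty(\hold)$, and both sides of the identity, as well as the maps $\psi\mapsto(\text{its first and second derivatives})$, are bounded from $H^2(\hold)$ into $L^2(\hold)$ with constants uniform in $s$, the Jacobians $D\Tt^{\pm1}$ and $D^2\Tt$ being uniformly bounded on the bounded set $\hold$. For the $\C^1$-regularity in $s$ claimed in the first displayed assertion, I would observe that after the product-rule expansion the result is a finite sum of terms of the form $c(s,\cdot)\,g$, where $g$ is a fixed element of $L^2(\hold)$ (a component of $\nabla\psi$ or $D^2\psi$) and $c(s,\cdot)$ is built from $D\Tt^{-1}$ and $D^2\Tt$. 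By Lemma~\ref{lem01} these coefficients lie in $\C^1([0,s_1],\C^0(\hold))$, so multiplication by the fixed $L^2$ function produces a map in $\C^1([0,s_1],L^2(\hold))$, and this is precisely what legitimizes the interchange of $\partial_s$ with the spatial derivative used above. Taking traces preserves this regularity, establishing the asserted $\C^1$ membership for the transported Laplacian.
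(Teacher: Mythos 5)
Your proof is correct, but it takes a genuinely different route from the paper's. The paper works with abstract first- and second-order differentials: it applies the second-order chain rule to $\psi\circ G$, identifies $d^2G$ with a third-order tensor, and arrives at the decomposition \eqref{eq:222}, whose second term $D^2(T_s^{-1})^{\transp}\circ T_s\,\nabla\psi$ must then be handled separately by differentiating $G\circ F=\operatorname{id}$ twice (formula \eqref{D2Ts-1}); the derivative at $s=0$ is then computed on that decomposition. You instead iterate the first-order chain rule twice: first the transported-gradient identity $[\nabla(\psi\circ T_s^{-1})]\circ T_s=DT_s^{-\transp}\nabla\psi$, then the chain rule for the Jacobian of $w_s=\nabla(\psi\circ T_s^{-1})$, giving the single product formula $[D^2(\psi\circ T_s^{-1})]\circ T_s=D\bigl(DT_s^{-\transp}\nabla\psi\bigr)\,DT_s^{-1}$. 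This buys two things: it avoids the second-order chain rule and the computation of $D^2(T_s^{-1})\circ T_s$ altogether, and it isolates all $s$-dependence in coefficient fields ($DT_s^{-1}$ and its spatial derivative) that Lemma \ref{lem01} controls directly, which makes both the differentiation at $s=0$ and the $\C^1([0,s_1],L^2(\hold))$ statement transparent --- your ``$\C^1$-in-$s$ coefficient times fixed $L^2$ function'' argument, and your explicit density step for passing from smooth $\psi$ to $\psi\in H^2(\hold)$, are in fact spelled out more carefully than in the paper, which is terse on both points. The price is that your intermediate formula is not manifestly symmetric and the expansion $D(A\nabla\psi)=(DA)\nabla\psi+A\,D^2\psi$ requires care with index placement when identifying $-D(D\ta^{\transp})\nabla\psi$ with the paper's tensor $-D^2\ta^{\transp}\nabla\psi$; you do this correctly, since $[D(D\ta^{\transp})\nabla\psi]_{ik}=\partial^2_{ik}\ta_j\,\partial_j\psi$ agrees with the convention $(D^2\ta^{\transp})_{ijk}=\partial^2_{ij}\ta_k$ contracted with $\nabla\psi$. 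Both proofs then conclude identically: taking the trace, using the symmetry of $D^2\psi$ and the identity \eqref{eq:55}, yields \eqref{der_lap} from \eqref{der_D2}.
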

Let $\psi:\R^d\to\R$, $F\in \C^2(\hold,\Rd)$, $G\in \C^2(\hold,\Rd)$, and introduce the first and second-order differentials $d(\psi\circ G): \R^d \to \mathcal{L}(\R^d,\R)$ and $d^2(\psi\circ G): \R^d \to \mathcal{L}(\R^d,\mathcal{L}(\R^d,\R))$.
The chain rule yields, with $H\in\R^d$,
\begin{equation*}
d(\psi\circ G)(x)(H) = d\psi(G(x))(dG(x)(H)),
\end{equation*}
and 
\begin{align*}
d^2(\psi\circ G)(x)(H_1,H_2) & = d(d(\psi\circ G)(x)(H_1))(H_2) = d(d\psi(G(x))(dG(x)(H_1)))(H_2),\\
& = d^2\psi(G(x))(dG(x)(H_1),dG(x)(H_2))
+ d\psi(G(x)) (d^2 G(x)(H_1,H_2)),
\end{align*}
with $H_1,H_2\in\R^d$, and where
$d^2 G: \R^d \to \mathcal{L}(\R^d,\mathcal{L}(\R^d,\R^d))$.
Thus $d^2 G(x)\in \mathcal{L}(\R^d,\mathcal{L}(\R^d,\R^d)) \cong \R^{d\times d\times d}$ and we can identify $d^2 G(x)$ with a tensor of order $3$ denoted by $D^2G$.
To be more precise we have
$$ d^2 G(x)(H_1,H_2) = D^2G(x)H_1 H_2\in\R^d,$$
and the entries of the tensor $D^2 G$ are $(D^2 G)_{ijk} = (\partial_{jk} G_i)$.
By symmetry of second-order derivatives, we have the property
$$ D^2G(x)H_1 H_2 = D^2G(x)H_2 H_1\in\R^d,$$
and we say that $D^2G(x)$ is {\it right-side symmetric}; see \cite{qi2017transposes}.

Thus, we have (see Section \ref{section1a0} for the definition of the transpose of a third-order tensor),
\begin{align*}
d\psi(G(x)) (d^2 G(x)(H_1,H_2)) & =  \nabla\psi (G(x))  D^2G(x)H_1 H_2
= \nabla\psi (G(x))  D^2G(x)H_2 H_1\\
& =  H_2  D^2G(x)^\transp H_1 \nabla\psi (G(x))  
=  (D^2G(x)^{\transp} \nabla\psi (G(x)) ) H_1 \cdot H_2 .
\end{align*} 
Note that $D^2G(x)^{\transp} \nabla\psi (G(x))$ is a symmetric matrix.

For the other term we have
\begin{align*}
d^2\psi(G(x))(dG(x)(H_1),dG(x)(H_2))
& = 
D^2\psi(G(x)) DG(x)H_1 \cdot DG(x)H_2\\
& =
DG(x)^\transp D^2\psi(G(x)) DG(x)H_1 \cdot H_2.
\end{align*}
Thus we have found
\begin{align*}
d^2(\psi\circ G)(x)(H_1,H_2) & = DG(x)^\transp D^2\psi(G(x)) DG(x)H_1 \cdot H_2
+ (D^2G(x)^{\transp} \nabla\psi (G(x)) ) H_1 \cdot H_2,
\end{align*}
so we can identify $d^2(\psi\circ G)(x)$ with the second-order tensor
$$ D^2(\psi\circ G)(x) = DG(x)^\transp D^2\psi(G(x)) DG(x) + D^2G(x)^{\transp} \nabla\psi (G(x)),$$
and we immediately get
\begin{align*}
\Delta(\psi\circ G)(x) & = \tr( D^2(\psi\circ G)(x)) = \tr(DG(x)^\transp D^2\psi(G(x)) DG(x)) + \tr(D^2G(x)^{\transp} \nabla\psi (G(x))). 
\end{align*}

Now suppose that $G = F^{-1}$, then $ G\circ F(x) = x$ and $D(G\circ F)(x) = I_d$ which yields 
$$DG(F(x)) DF(x) = I_d$$ 
or equivalently
$ DG\circ F = DF^{-1}$.
Using this property we compute
\begin{align}\label{177}
\begin{split}
 [D^2(\psi\circ G)]\circ F &= (DG^\transp D^2\psi \circ G DG)\circ F + (D^2G^{\transp} \nabla\psi \circ G)\circ F\\
 & = DF^{-\transp} D^2\psi DF^{-1} +  D^2G^{\transp}\circ F \nabla\psi.
\end{split}
\end{align}
In a similar way we have
$
d(G\circ F)(x)(H) = d G(F(x))(dF(x)(H)),
$
and then 
\begin{align*}
0 = d^2(G\circ F)(x)(H_1,H_2) 
& = d^2 G(F(x))(dF(x)(H_1),dF(x)(H_2))
+ dG(F(x)) (d^2 F(x)(H_1,H_2)).
\end{align*}
Note that $d^2(G\circ F) \equiv 0$ since we have $(G\circ F)(x) =x$. 
Thus we obtain
\begin{align*}
d^2 G(F(x))(dF(x)(H_1),dF(x)(H_2)) & 
= - dG(F(x)) (d^2 F(x)(H_1,H_2)),
\end{align*}
or equivalently
\begin{align*}
D^2 G(F(x))[DF(x)(H_1)] [DF(x)(H_2)] & 
= - DG(F(x)) (D^2 F(x)H_1 H_2).
\end{align*}
Alternatively, introducing $\hat H_1 :=DF(x)(H_1)$ and  $\hat H_2 :=DF(x)(H_2)$, we can write this relation as
\begin{align*}
D^2 G \circ F  \hat H_1 \hat  H_2 & 
= - DG\circ F  (D^2 F[DF^{-1}(\hat H_1)] [DF^{-1}(\hat  H_2)])\\
& = - DF^{-1}  (D^2 F[DF^{-1}(\hat H_1)] [DF^{-1}(\hat  H_2)]).
\end{align*}

Now we identify $F = T_s$ and $G = T_s^{-1}$,
this yields
\begin{align}\label{D2Ts-1}
D^2 (T_s^{-1}) \circ T_s  \hat H_1 \hat  H_2 
& = - DT_s^{-1}  (D^2 T_s[DT_s^{-1}(\hat H_1)] [DT_s^{-1}(\hat  H_2)]).
\end{align}
Using \eqref{177} we obtain
\begin{align}\label{eq:222}
[D^2 (\psi\circ T_s^{-1})]\circ T_s & = DT_s^{-\transp} D^2\psi DT_s^{-1}
+ D^2(T_s^{-1})^{\transp}\circ T_s \nabla\psi ,
\end{align}
and then
$
[\Delta(\psi\circ T_s^{-1})]\circ T_s = \tr(DT_s^{-\transp} D^2\psi DT_s^{-1})
+ \tr(D^2(T_s^{-1})^{\transp}\circ T_s \nabla\psi )
$.
Using $\psi\in H^2(\hold)$,  \eqref{D2Ts-1} and  Lemma \ref{lem01}, this shows that
$$s\mapsto[\Delta(\psi\circ T_s^{-1})]\circ T_s \in\C^1([0,\tz], L^2(\hold)).$$

We have then $\partial_s DF|_{s=0} = D\theta$ and $\partial_s DG|_{s=0} = -D\theta$.
Note that since $F(x)|_{s=0} = T_0(x)=x$, we have $DF(x)|_{s=0} = DT_0(x)= I_d$ and $D^2 F(x)|_{s=0} = D^2 T_0(x)= 0$. 
Using this we obtain
\begin{align*}
\partial_s (D^2 G \circ F)|_{s=0}  \hat H_1 \hat  H_2 & 
= [- DF^{-1}   (\partial_s  (D^2 F)[DF^{-1}(\hat H_1)] [DF^{-1}(\hat  H_2)])]|_{s=0}
= - D^2\theta \hat H_1 \hat H_2.
\end{align*}
Now, differentiating \eqref{eq:222} we get
\begin{align*}
\partial_s ([D^2 (\psi\circ G)]\circ F)|_{s=0} & = \partial_s (DF^{-\transp} D^2\psi DF^{-1})|_{s=0}
+ \partial_s (D^2G^{\transp}\circ F \nabla\psi )|_{s=0}\\
& = - D\theta^{\transp} D^2\psi -   D^2\psi D\theta 
- D^2\theta^{\transp}\nabla\psi ,
\end{align*}
and then
\begin{align*}
\partial_s ([\Delta(\psi\circ G)]\circ F)|_{s=0} 
& = -\tr( D\theta^{\transp} D^2\psi) - \tr(  D^2\psi D\theta) 
- \tr(D^2\theta^{\transp}\nabla\psi )\\
& = -2 D^2\psi : D\theta - \tr(D^2\theta^{\transp}\nabla\psi ).
\end{align*}
Finally, using Einstein summation convention  and $(D^2 \theta^{\transp})_{ijk} = (\partial_{ij}^2 \theta_k)$, we compute
$D^2\theta^{\transp}\nabla\psi 
= \partial^2_{ij}\theta_k  \partial_k\psi$.
This yields
\begin{equation}\label{eq:55}
\tr(D^2\theta^{\transp}\nabla\psi ) = \partial^2_{ii}\theta_k \partial_k\psi = (\Delta \theta) \cdot \nabla\psi, 
\end{equation}
and completes the proof.

\subsection{Proof of Proposition \ref{thm2}}
In order to prove Proposition \ref{thm2} it is enough to show that
$\partial_{s}\B$ and $\partial_{\vp}\B$ exist and are continuous \cite[Theorem 4.3]{ambrosetti1995primer}.
\begin{proposition}
\label{eq:prop4} The function $\partial_{s}\mathcal{B}:[0,s_{1}]\times L^{2}(\Omega)\to\R$
exists and is continuous. It is given by 
\[
\partial_{s}\mathcal{B}(s,\varphi)=\int_{\Omega}\nabla_{x}\mathcal{F}(T_{s}(x),\varphi(x))\cdot\partial_s T_{s}(x)\xi(s)+\mathcal{F}(T_{s}(x),\varphi(x))\xi'(s).
\]
\end{proposition}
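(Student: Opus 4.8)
The plan is to justify differentiation under the integral sign in $\B(s,\vp)=\iom \F(\Tt(x),\vp(x))\,\xi(s)\,dx$, which formally produces the stated formula by the chain and product rules, and then to establish continuity of the resulting map. The structural facts I will rely on are that $s\mapsto\Tt\in\C^1([0,s_1],\C^2(\hold,\Rd))$ and $s\mapsto\xi(s)\in\C^1([0,s_1],\C^1(\hold))$ by Lemma \ref{lem01}, so that $\partial_s\Tt$, $\xi(s)$ and $\xi'(s)$ are continuous on the compact set $[0,s_1]\times\overline{\Om}$ and hence uniformly bounded there, and that the growth conditions \eqref{F1.1}--\eqref{F1.2} control $\F$ and $\nabla_x\F$ by $c_0+c_1 r^2$.

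First I would prove existence of $\partial_s\B(s,\vp)$. Fix $(s,\vp)$ and, for a.e.\ $x\in\Om$, set $g_x(\sigma):=\F(T_\sigma(x),\vp(x))\,\xi(\sigma)(x)$; this is a $\C^1$ function of $\sigma$ whose derivative is precisely the integrand of the claimed formula. Writing the difference quotient $h^{-1}(\B(s+h,\vp)-\B(s,\vp))$ as $\iom h^{-1}(g_x(s+h)-g_x(s))\,dx$ and applying the mean value theorem to $g_x$, each integrand equals $g_x'(\tau)$ for some $\tau$ between $s$ and $s+h$. Combining the uniform bounds on $\partial_s\Tt$, $\xi$ and $\xi'$ with \eqref{F1.1} gives the pointwise estimate $|g_x'(\tau)|\le C(c_0+c_1\vp(x)^2)$ with $C$ independent of $x$ and $h$; since $\Om$ is bounded and $\vp\in L^2(\Om)$, the majorant lies in $L^1(\Om)$. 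As the difference quotients converge pointwise to $g_x'(s)$, the dominated convergence theorem yields both the existence of $\partial_s\B(s,\vp)$ and the announced expression.

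Next I would prove continuity of $\partial_s\B$ on $[0,s_1]\times L^2(\Om)$. I write $\partial_s\B(s,\vp)=\iom H(s,x,\vp(x))\,dx$ with $H(s,x,r):=\nabla_x\F(\Tt(x),r)\cdot\partial_s\Tt(x)\,\xi(s)(x)+\F(\Tt(x),r)\,\xi'(s)(x)$, which is jointly continuous in $(s,x,r)$ and satisfies $|H(s,x,r)|\le C(c_0+c_1 r^2)$ uniformly in $(s,x)$. Given $(s_n,\vp_n)\to(s,\vp)$, it suffices to show every subsequence admits a further subsequence along which $\partial_s\B(s_n,\vp_n)\to\partial_s\B(s,\vp)$. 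Along any subsequence, the $L^2$-convergence of $\vp_n$ furnishes a further subsequence (not relabeled) with $\vp_n\to\vp$ a.e.\ and $|\vp_n|\le w$ for some $w\in L^2(\Om)$. Then $H(s_n,x,\vp_n(x))\to H(s,x,\vp(x))$ a.e.\ by joint continuity, while $|H(s_n,x,\vp_n(x))|\le C(c_0+c_1 w(x)^2)\in L^1(\Om)$, so dominated convergence concludes. This delivers the continuity of $\partial_s\B$, which combined with the analogous statement for $\partial_\vp\B$ and \cite[Theorem 4.3]{ambrosetti1995primer} gives Proposition \ref{thm2}.

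The main obstacle is the low regularity $\vp\in L^2(\Om)$: because $\vp$ is unbounded, no uniform dominating function is available, and one must exploit the quadratic growth in \eqref{F1.1} to produce an $L^2$-controlled integrable majorant for both the difference quotients and the limit integrand. For the continuity statement this same limitation forces the subsequence-extraction argument, since $L^2$-convergence alone yields neither pointwise convergence nor a pointwise majorant; this is the step demanding the most care.
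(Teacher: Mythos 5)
Your proposal is correct and follows essentially the same route as the paper's proof: existence of $\partial_s\mathcal{B}$ via the mean value theorem, the growth bound \eqref{F1.1}, and dominated convergence; continuity via extraction of an a.e.-convergent subsequence dominated by an $L^2$ function, followed again by dominated convergence. The only (cosmetic) difference is that you state the sub-subsequence principle more explicitly than the paper does, and you make the uniform-in-$(s,x)$ bounds on $\partial_s T_s$, $\xi$, $\xi'$ from Lemma \ref{lem01} explicit rather than implicit.
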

\begin{proof}
The argument is similar to \cite[Section 1.3.2]{badiale2010semilinear}.
We know that $\left(s,x,r\right)\mapsto\mathcal{F}(T_{s}(x),r)\xi(s)$
is a $\mathcal{C}^{1}$ function. 
For $\vp\in L^{2}(\Omega)$ we consider
\[
G(s,h,x):=\frac{\mathcal{F}(T_{s+h}(x),\varphi(x))\xi(s+h)-\mathcal{F}(T_{s}(x),\varphi(x))\xi(s)}{h}.
\]
By the chain rule, it is clear that, for almost every $x\in\Omega$,
we have 
\[
\lim_{h\to0}G(s,h,x)=\nabla_{x}\mathcal{F}(T_{s}(x),\varphi(x))\cdot\partial_s T_{s}(x)\xi(s)+\mathcal{F}(T_{s}(x),\varphi(x))\xi'(s).
\]
The mean value theorem and our assumptions \eqref{F1.1}-\eqref{F1.2} on $\mathcal{F}$ also imply that
\[
|G(s,h,x)|\le c_0+c_1\varphi(x)^{2},
\]
for some positive constants $c_0$ and $c_1$.
As $c_0+c_1\varphi(x)^{2}$ is integrable, Lebesgue's dominated convergence
theorem implies that 
\[
\begin{aligned} & \lim_{h\to0}\int_{\Omega}\frac{\mathcal{F}(T_{s+h}(x),\varphi(x))\xi(s+h)-\mathcal{F}(T_{s}(x),\varphi(x))\xi(s)}{h}\\
 & \quad =\int_{\Omega}\nabla_{x}\mathcal{F}(T_{s}(x),\varphi(x))\cdot\partial_s T_{s}(x)\xi(s)+\mathcal{F}(T_{s}(x),\varphi(x))\xi'(s).
\end{aligned}
\]
This shows that $\partial_{s}\mathcal{B}$ exists. 

In order to prove
its continuity at an arbitrary point $(s_{0},\varphi_{0})$ in $[0,s_{1}]\times L^{2}(\Omega)$,
we consider an arbitrary sequence $\left((s_{j},\varphi_{j})\right)_{j}$
that converges to $(s_{0},\varphi_{0})$. The continuity follows by
showing that there is always a subsequence $\left((s_{j_{k}},\varphi_{j_{k}})\right)_{k}$
such that $\partial_{s}\mathcal{B}(s_{j_{k}},\varphi_{j_{k}})$ converges
to $\partial_{s}\mathcal{B}(s_{0},\varphi_{0})$.
Such a subsequence can be found recalling that we can always find
a subsequence and a function $\sigma\in L^2(\Om)$ (\cite[Theorem 3.12]{rudin2006real})
such that $\lim_{k\to\infty}\varphi_{j_{k}}(x)=\varphi_{0}(x)$, a.e. $x\in\Omega$, and $|\varphi_{j_{k}}(x)|\le|\sigma(x)|$ for almost every $x\in\Omega$
and all $k\in\mathds{N}_{0}$.

These conditions imply that, for almost every $x\in\Omega$, 
\[
\begin{aligned} & \lim_{k\to\infty}\left(\nabla_{x}\mathcal{F}(T_{s_{j_{k}}}(x),\varphi_{j_{k}}(x))\cdot\partial_s T_{s_{j_{k}}}(x)\xi(s_{j_{k}})+\mathcal{F}(T_{s_{j_{k}}}(x),\varphi_{j_{k}}(x))\xi'(s_{j_{k}})\right)\\
 & \quad =\nabla_{x}\mathcal{F}(T_{s_{0}}(x),\varphi_{0}(x))\cdot\partial_s T_{s_{0}}(x)\xi(s_{0})+\mathcal{F}(T_{s_{0}}(x),\varphi_{0}(x))\xi'(s_{0})
\end{aligned}
\]
and
\[
\begin{aligned} & \left|\nabla_{x}\mathcal{F}(T_{s_{j_{k}}}(x),\varphi_{j_{k}}(x))\cdot\partial_s T_{s_{j_{k}}}(x)\xi(s_{j_{k}})+\mathcal{F}(T_{s_{j_{k}}}(x),\varphi_{j_{k}}(x))\xi'(s_{j_{k}})\right|\\
 &\quad \le c_0+c_1\varphi_{j_{k}}(x)^{2}\le c_0+c_1\sigma(x)^{2}.
\end{aligned}
\]
Then, Lebesgue's dominated convergence theorem implies that 
\[
\begin{aligned} & \lim_{k\to\infty}\int_{\Omega}\nabla_{x}\mathcal{F}(T_{s_{j_{k}}}(x),\varphi_{j_{k}}(x))\cdot\partial_s T_{s_{j_{k}}}(x)\xi(s_{j_{k}})+\mathcal{F}(T_{s_{j_{k}}}(x),\varphi_{j_{k}}(x))\xi'(s_{j_{k}})\\
 &\quad  =\int_{\Omega}\nabla_{x}\mathcal{F}(T_{s_{0}}(x),\varphi_{0}(x))\cdot\partial_s T_{s_{0}}(x)\xi(s_{0})+\mathcal{F}(T_{s_{0}}(x),\varphi_{0}(x))\xi'(s_{0}).
\end{aligned}
\]
That is,
$
\lim_{k\to\infty}\partial_{s}\mathcal{B}(s_{j_{k}},\varphi_{j_{k}})=\partial_{s}\mathcal{B}(s_{0},\varphi_{0})$.
\end{proof}

\begin{proposition}\label{prop:dphi_b}
The function $\partial_{\vp}\B:[0,s_{1}]\times L^{2}(\Om)\to L^{2}(\Om)$
exists and is continuous. Moreover
\[
\langle\partial_{\vp}\B(s,\vp),\hat{\vp}\rangle_{L^2(\Om)^*,L^2(\Om)}=\int_{\Om}\partial_r \mathcal{F}(T_{s}(x),\vp(x))\hat{\vp}(x)\xi(s).
\]
\end{proposition}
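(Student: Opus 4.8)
The plan is to mirror the structure of the proof of Proposition~\ref{eq:prop4}, treating $\partial_{\vp}\B$ as the Gateaux derivative of a superposition (Nemytskii) functional and establishing its existence via the mean value theorem together with Lebesgue's dominated convergence theorem, and its continuity via a subsequence extraction argument in the spirit of \cite[Section~1.3.2]{badiale2010semilinear}. Since then $\partial_s\B$ and $\partial_\vp\B$ both exist and are continuous, Proposition~\ref{thm2} follows from \cite[Theorem~4.3]{ambrosetti1995primer}. Throughout I use that $\Om\in\mathds{P}(\hold)$ is bounded, hence of finite measure so that $L^2(\Om)\hookrightarrow L^1(\Om)$, and that by Lemma~\ref{lem01} the maps $s\mapsto T_s$ and $s\mapsto\xi(s)$ are continuous into $\C^0(\hold)$ with $\xi$ uniformly bounded on $[0,s_1]\times\hold$.

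First I would establish existence of the directional derivative and the stated formula. For fixed $(s,\vp)$ and any $\hvp\in L^2(\Om)$, consider the difference quotient
\[
\frac{\B(s,\vp+t\hvp)-\B(s,\vp)}{t}=\int_\Om\frac{\mathcal{F}(T_s(x),\vp+t\hvp)-\mathcal{F}(T_s(x),\vp)}{t}\,\xi(s).
\]
By the mean value theorem applied in the $r$-variable of $\mathcal{F}$, the integrand equals $\partial_r\mathcal{F}(T_s(x),\vp+\tau_t(x)\,t\hvp)\,\hvp\,\xi(s)$ for some $\tau_t(x)\in(0,1)$, and this converges pointwise a.e.\ to $\partial_r\mathcal{F}(T_s(x),\vp)\,\hvp\,\xi(s)$ as $t\to0$ since $\mathcal{F}\in\C^1$. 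For $|t|\le1$ the growth bound \eqref{F1.2} yields $|\partial_r\mathcal{F}(T_s(x),\vp+\tau_t t\hvp)|\le c_0+c_1(|\vp|+|\hvp|)$, so the integrand is dominated in absolute value by $(c_0+c_1(|\vp|+|\hvp|))|\hvp|\,\|\xi\|_\infty$, which lies in $L^1(\Om)$ by the Cauchy--Schwarz inequality and finiteness of $|\Om|$. Dominated convergence then gives the formula for $\langle\partial_\vp\B(s,\vp),\hvp\rangle$, and the same bound \eqref{F1.2} shows $\partial_r\mathcal{F}(T_s(\cdot),\vp)\,\xi(s)\in L^2(\Om)$; hence $\partial_\vp\B(s,\vp)$ is represented by an element of $L^2(\Om)=L^2(\Om)^*$ and the functional $\hvp\mapsto\langle\partial_\vp\B(s,\vp),\hvp\rangle$ is linear and bounded as required.

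For the continuity of $\partial_\vp\B:[0,s_1]\times L^2(\Om)\to L^2(\Om)$ I would argue exactly as in Proposition~\ref{eq:prop4}. Fixing $(s_0,\vp_0)$ and a sequence $(s_j,\vp_j)\to(s_0,\vp_0)$, it suffices to show that every subsequence admits a further subsequence along which $\partial_\vp\B(s_{j_k},\vp_{j_k})\to\partial_\vp\B(s_0,\vp_0)$ in $L^2(\Om)$. Along such a subsequence I extract, via \cite[Theorem~3.12]{rudin2006real}, a dominating function $\sigma\in L^2(\Om)$ and a.e.\ convergence $\vp_{j_k}\to\vp_0$. Using that $T_{s_{j_k}}\to T_{s_0}$ and $\xi(s_{j_k})\to\xi(s_0)$ uniformly (Lemma~\ref{lem01}) and that $\partial_r\mathcal{F}$ is continuous, the integrands $\partial_r\mathcal{F}(T_{s_{j_k}}(x),\vp_{j_k})\xi(s_{j_k})$ converge a.e.\ to $\partial_r\mathcal{F}(T_{s_0}(x),\vp_0)\xi(s_0)$, while \eqref{F1.2} bounds their squares by $(c_0+c_1|\sigma|)^2\|\xi\|_\infty^2\in L^1(\Om)$. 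A final application of dominated convergence gives the desired $L^2$-convergence.

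The main obstacle, as in all Nemytskii-type continuity results, is the $L^2\to L^2$ continuity step rather than the existence step: one cannot pass to the limit directly in $L^2$, and the subsequence-plus-domination device is needed precisely because $\vp_j\to\vp_0$ in $L^2$ does not provide a.e.\ convergence without extracting a subsequence. The role of the growth hypothesis \eqref{F1.2}, namely $|\partial_r\mathcal{F}(x,r)|\le c_0+c_1|r|$, is exactly to furnish an $L^2$-integrable square majorant for the Nemytskii image when the argument is dominated by $\sigma\in L^2$; this is what makes the operator both well-defined into $L^2(\Om)$ and continuous.
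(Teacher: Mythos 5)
Your proof is correct, but your existence step takes a genuinely different route from the paper's. The paper does not prove differentiability in $\vp$ by hand: it factors $\B(s,\cdot)=\mathcal{T}\circ\mathcal{S}$, where $\mathcal{S}:L^{2}(\Om)\to L^{1}(\Om)$, $\vp\mapsto\mathcal{F}(T_{s}(\cdot),\vp)\xi(s)$, is a Nemytskii operator whose $\C^{1}$ regularity under the growth assumptions \eqref{F1.1}--\eqref{F1.2} is quoted from \cite[Theorem 2.6]{djairolectures}, and $\mathcal{T}:\psi\mapsto\int_{\Om}\psi$ is bounded and linear; the chain rule then yields Fréchet differentiability and the stated formula at once. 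Your mean-value-theorem plus dominated-convergence computation essentially re-proves that cited Nemytskii result in this particular case, which makes your argument self-contained where the paper's is by citation. The one point you should make explicit is that your computation only produces the Gateaux (directional) derivative; since the proposition feeds into Proposition \ref{thm2} via \cite[Theorem 4.3]{ambrosetti1995primer}, you should add the standard remark that a Gateaux derivative which is continuous in $(s,\vp)$ --- exactly what your second step establishes --- is automatically the partial Fréchet derivative, so the conclusion matches the intended meaning of $\partial_{\vp}\B$. With that remark included there is no gap. Your continuity step (subsequence extraction via \cite[Theorem 3.12]{rudin2006real}, domination of the squared integrands by $(c_{0}+c_{1}|\sigma|)^{2}\|\xi\|_{\infty}^{2}\in L^{1}(\Om)$, and dominated convergence) coincides with the paper's, which simply states that continuity ``follows using the same arguments of Proposition \ref{eq:prop4}''; your version spells out the details the paper leaves implicit, including why the growth bound \eqref{F1.2} is what makes the Nemytskii image land in, and converge in, $L^{2}(\Om)$.
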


\begin{proof}
In view of \cite[Theorem 2.6]{djairolectures}, our assumptions for the function $\mathcal{F}$ imply that the function $\mathcal{S}$ defined below is $\C^{1}$:
\[
\vp\in L^{2}(\Om)\overset{\mathcal{S}}{\mapsto}\mathcal{F}(T_{s}(x),\vp(x))\xi(s)\in L^{1}(\Om).
\]
As $\psi\in L^{1}\overset{\mathcal{T}}{\mapsto}\int_{\Om}\psi$
is continuous and linear, we conclude that $\vp\mapsto \B(s,\vp)$
is $\C^{1}$ as it is equal to $\mathcal{T}\circ\mathcal{S}$. The
expression of the derivative also follows from \cite[Theorem 2.6]{djairolectures}.  Its continuity follows using the same arguments of Proposition \ref{eq:prop4}.
\end{proof}
%

\bibliographystyle{abbrv}
\bibliography{shape_lagrangian}

\begin{thebibliography}{10}

\bibitem{ambrosetti1995primer}
A.~Ambrosetti and G.~Prodi.
\newblock {\em A primer of nonlinear analysis}.
\newblock 34. Cambridge University Press, 1995.

\bibitem{arendt2010partielle}
W.~Arendt and K.~Urban.
\newblock {\em Partielle differenzialgleichungen}.
\newblock Springer, 2010.

\bibitem{badiale2010semilinear}
M.~Badiale and E.~Serra.
\newblock {\em Semilinear Elliptic Equations for Beginners: Existence Results
  via the Variational Approach}.
\newblock Springer Science \& Business Media, 2010.

\bibitem{MR3274847}
G.~Bouchitt\'e, I.~Fragal\`a, and I.~Lucardesi.
\newblock Shape derivatives for minima of integral functionals.
\newblock {\em Math. Program.}, 148(1-2, Ser. B):111--142, 2014.

\bibitem{MR3489049}
G.~Bouchitt\'e, I.~Fragal\`a, and I.~Lucardesi.
\newblock A variational method for second order shape derivatives.
\newblock {\em SIAM J. Control Optim.}, 54(2):1056--1084, 2016.

\bibitem{MR812624}
E.~Casas.
\newblock {$L^2$} estimates for the finite element method for the {D}irichlet
  problem with singular data.
\newblock {\em Numer. Math.}, 47(4):627--632, 1985.

\bibitem{casas2008optimality}
E.~Casas and F.~Tr{\"o}ltzsch.
\newblock First-and second-order optimality conditions for a class of optimal
  control problems with quasilinear elliptic equations.
\newblock {\em SIAM journal on control and optimization}, 48(2):688--718, 2009.

\bibitem{MR862783}
J.~C\'{e}a.
\newblock Conception optimale ou identification de formes: calcul rapide de la
  d\'{e}riv\'{e}e directionnelle de la fonction co\^{u}t.
\newblock {\em RAIRO Mod\'{e}l. Math. Anal. Num\'{e}r.}, 20(3):371--402, 1986.

\bibitem{djairolectures}
D.~G. de~Figueiredo.
\newblock {\em Lectures on the Ekeland variational principle with applications
  and detours}, volume~81.
\newblock Springer Berlin, 1989.

\bibitem{MR3609755}
M.~C. Delfour and K.~Sturm.
\newblock Minimax differentiability via the averaged adjoint for control/shape
  sensitivity.
\newblock {\em IFAC-PapersOnLine}, 49(8):142--149, 2016.

\bibitem{MR3711067}
M.~C. Delfour and K.~Sturm.
\newblock Parametric semidifferentiability of minimax of {L}agrangians:
  averaged adjoint approach.
\newblock {\em J. Convex Anal.}, 24(4):1117--1142, 2017.

\bibitem{MR2731611}
M.~C. Delfour and J.-P. Zol\'{e}sio.
\newblock {\em Shapes and geometries}, volume~22 of {\em Advances in Design and
  Control}.
\newblock Society for Industrial and Applied Mathematics (SIAM), Philadelphia,
  PA, second edition, 2011.
\newblock Metrics, analysis, differential calculus, and optimization.

\bibitem{MR1452889}
R.~Dziri and J.~P. Zol\'{e}sio.
\newblock An energy principle for a free boundary problem for {N}avier-{S}tokes
  equations.
\newblock In {\em Partial differential equation methods in control and shape
  analysis ({P}isa)}, volume 188 of {\em Lecture Notes in Pure and Appl.
  Math.}, pages 133--151. Dekker, New York, 1997.

\bibitem{MR3878790}
B.~F\"{u}hr, V.~Schulz, and K.~Welker.
\newblock Shape optimization for interface identification with obstacle
  problems.
\newblock {\em Vietnam J. Math.}, 46(4):967--985, 2018.

\bibitem{MR3436555}
P.~Gangl, U.~Langer, A.~Laurain, H.~Meftahi, and K.~Sturm.
\newblock Shape optimization of an electric motor subject to nonlinear
  magnetostatics.
\newblock {\em SIAM J. Sci. Comput.}, 37(6):B1002--B1025, 2015.

\bibitem{2020arXiv200509011G}
P.~{Gangl} and K.~{Sturm}.
\newblock {Topological derivative for PDEs on surfaces}.
\newblock {\em arXiv e-prints}, page arXiv:2005.09011, May 2020.

\bibitem{MR2453959}
G.~Grubb.
\newblock {\em Distributions and operators}, volume 252 of {\em Graduate Texts
  in Mathematics}.
\newblock Springer, New York, 2009.

\bibitem{MR3562369}
C.~Heinemann and K.~Sturm.
\newblock Shape optimization for a class of semilinear variational inequalities
  with applications to damage models.
\newblock {\em SIAM J. Math. Anal.}, 48(5):3579--3617, 2016.

\bibitem{MR3791463}
A.~Henrot and M.~Pierre.
\newblock {\em Shape variation and optimization}, volume~28 of {\em EMS Tracts
  in Mathematics}.
\newblock European Mathematical Society (EMS), Z\"{u}rich, 2018.
\newblock A geometrical analysis, English version of the French publication [
  MR2512810] with additions and updates.

\bibitem{HintLaur11}
M.~Hinterm{\"u}ller and A.~Laurain.
\newblock Optimal shape design subject to elliptic variational inequalities.
\newblock {\em SIAM Journal on Control and Optimization}, 49(3):1015--1047,
  2011.

\bibitem{MR3350625}
M.~Hinterm{\"u}ller, A.~Laurain, and I.~Yousept.
\newblock Shape sensitivities for an inverse problem in magnetic induction
  tomography based on the eddy current model.
\newblock {\em Inverse Problems}, 31(6):065006, 25, 2015.

\bibitem{MR2434064}
K.~Ito, K.~Kunisch, and G.~H. Peichl.
\newblock Variational approach to shape derivatives.
\newblock {\em ESAIM Control Optim. Calc. Var.}, 14(3):517--539, 2008.

\bibitem{kato2013perturbation}
T.~Kato.
\newblock {\em Perturbation theory for linear operators}, volume 132.
\newblock Springer Science \& Business Media, 2013.

\bibitem{LAURAIN2020328}
A.~Laurain.
\newblock Distributed and boundary expressions of first and second order shape
  derivatives in nonsmooth domains.
\newblock {\em Journal de Mathématiques Pures et Appliquées}, 134:328 -- 368,
  2020.

\bibitem{MR3535238}
A.~Laurain and K.~Sturm.
\newblock Distributed shape derivative {\it via} averaged adjoint method and
  applications.
\newblock {\em ESAIM Math. Model. Numer. Anal.}, 50(4):1241--1267, 2016.

\bibitem{LWY}
A.~Laurain, I.~Yousept, and M.~Winckler.
\newblock Shape optimization for superconductors governed by {H}(curl)-elliptic
  variational inequalities.
\newblock Preprint Number SPP1962-127, November 2019.

\bibitem{lionsmagenes}
J.~L. Lions and E.~Magenes.
\newblock {\em Non-homogeneous boundary value problems and applications},
  volume~1.
\newblock Springer Science \& Business Media, 2012.

\bibitem{Luft2020}
D.~Luft, V.~H. Schulz, and K.~Welker.
\newblock Efficient techniques for shape optimization with variational
  inequalities using adjoints.
\newblock {\em {SIAM} Journal on Optimization}, 30(3):1922--1953, Jan. 2020.

\bibitem{MR0015180}
M.~Nagumo.
\newblock \"{U}ber die {L}age der {I}ntegralkurven gew\"ohnlicher
  {D}ifferentialgleichungen.
\newblock {\em Proc. Phys.-Math. Soc. Japan (3)}, 24:551--559, 1942.

\bibitem{necas}
J.~Necas.
\newblock {\em Direct methods in the theory of elliptic equations}.
\newblock Springer Science \& Business Media, 2011.

\bibitem{qi2017transposes}
L.~Qi.
\newblock Transposes, {L}-eigenvalues and invariants of third order tensors,
  2017.

\bibitem{rudin2006real}
W.~Rudin.
\newblock {\em Real and complex analysis}.
\newblock Tata McGraw-hill education, 2006.

\bibitem{MR1215733}
J.~Soko{\l}owski and J.-P. Zol{\'e}sio.
\newblock {\em Introduction to shape optimization}, volume~16 of {\em Springer
  Series in Computational Mathematics}.
\newblock Springer-Verlag, Berlin, 1992.

\bibitem{MR3374631}
K.~Sturm.
\newblock Minimax {L}agrangian approach to the differentiability of nonlinear
  {PDE} constrained shape functions without saddle point assumption.
\newblock {\em SICON}, 53(4):2017--2039, 2015.

\bibitem{MR3584578}
K.~Sturm.
\newblock Shape optimization with nonsmooth cost functions: from theory to
  numerics.
\newblock {\em SIAM J. Control Optim.}, 54(6):3319--3346, 2016.

\bibitem{2018arXiv180300304S}
K.~{Sturm}.
\newblock {Topological sensitivities via a Lagrangian approach for semi-linear
  problems}.
\newblock {\em arXiv e-prints}, page arXiv:1803.00304, Mar. 2018.

\bibitem{MR3165308}
G.~Wachsmuth.
\newblock Differentiability of implicit functions: beyond the implicit function
  theorem.
\newblock {\em J. Math. Anal. Appl.}, 414(1):259--272, 2014.

\bibitem{Walker_book2015}
S.~W. Walker.
\newblock {\em The Shapes of Things: A Practical Guide to Differential Geometry
  and the Shape Derivative}, volume~28 of {\em Advances in Design and Control}.
\newblock SIAM, 1st edition, 2015.

\bibitem{wlokapartial}
J.~Wloka.
\newblock {\em Partial differential equations}.
\newblock Cambridge University, 1987.

\bibitem{MR816732}
E.~Zeidler.
\newblock {\em Nonlinear functional analysis and its applications. {I}}.
\newblock Springer-Verlag, New York, 1986.
\newblock Fixed-point theorems, Translated from the German by Peter R. Wadsack.

\end{thebibliography}
\end{document}